\newcommand{\e}{\varepsilon}
\newtheorem{lemma}{Lemma}[section]
\newtheorem{theorem}[lemma]{Theorem}
\newtheorem{corollary}[lemma]{Corollary} 
\newtheorem{conjecture}[lemma]{Conjecture}
\newcommand{\weak}{\rightharpoonup}
\let\weakto\weak
\DeclareMathOperator*\wliminf{w-liminf}
\newcommand{\beq}{\begin{equation}}
\newcommand{\eeq}{\end{equation}}
\def\0{\mathbf 0}
\DeclareMathOperator{\dist}{dist}
\long\def\drop#1{}
\def\pref#1{(\ref{#1})}
\DeclareMathOperator{\supp}{supp}
\DeclareMathOperator{\diam}{diam}
\def\R{\mathbf R}
\def\N{\mathbf N}
\def\Z{\mathbf Z}
\def\H{\mathcal H}
\def\Hmo#1{\|#1-{\textstyle\dashint #1}\|_{H^{-1}}}
\def\Hmospace#1#2{\|#1-{\textstyle\dashint #1}\|_{H^{-1}(#2)}}
\def\Hmospacea#1#2{\left\|#1-{\textstyle\dashint_{\Ta} #1}\right\|_{H^{-1}(#2)}}
\def\Hmospaceb#1#2{\left\|#1-{\textstyle\dashint_{\Tb} #1}\right\|_{H^{-1}(#2)}}
\def\Fea{{{\sf{E_\eta^{2d}}}}}
\def\Feb{{\sf{E_\eta^{3d}}}}
\def\Fza{{\sf{E_0^{2d}}}}
\def\Fzb{{\sf{E_0^{3d}}}}
\def\fza{{e_0^{\rm 2d}}}
\def\fzb{{e_0^{\rm 3d}}}
\def\lscfza{\overline{e_0^{\rm 2d}}}
\def\Hea{{\sf{F_\eta^{2d}}}}
\def\Heb{{\sf{F_\eta^{3d}}}}
\def\Hza{{\sf{F_0^{2d}}}}
\def\Hzb{{\sf{F_0^{3d}}}}
\def\T{\mathbf{T}}
\def\Ta{{\mathbf{T}^2}}
\def\Tb{{\mathbf{T}^3}}
\def\logeta{\left|\log\eta\right|}
\def\invlogeta{\left|\log\eta\right|^{-1}}
\def\L{\mathcal L}
\def\Xint#1{\mathchoice
   {\XXint\displaystyle\textstyle{#1}}%
   {\XXint\textstyle\scriptstyle{#1}}%
   {\XXint\scriptstyle\scriptscriptstyle{#1}}%
   {\XXint\scriptscriptstyle\scriptscriptstyle{#1}}%
   \!\int}
\def\XXint#1#2#3{{\setbox0=\hbox{$#1{#2#3}{\int}$}
     \vcenter{\hbox{$#2#3$}}\kern-.5\wd0}}
\def\dashint{\Xint-}
\def\tdashint{{\textstyle\dashint}}
\newenvironment{remark}%
  {\par\medbreak\refstepcounter{lemma}%
    \noindent\textbf{Remark~\thetheorem. }}%
  {\par\medskip}
\begin{document}

\begin{title}{Small Volume Fraction Limit of the Diblock Copolymer Problem: I. Sharp Interface Functional}

\author{Rustum Choksi\footnote{Department of Mathematics, Simon Fraser University, 
Burnaby, Canada, choksi@math.sfu.ca}  \and Mark A. Peletier\footnote{Department of Mathematics and Institute for Complex Molecular Systems, Technische Universiteit Eindhoven, The Netherlands, m.a.peletier@tue.nl}}
\end{title}

\maketitle 

\begin{abstract} 
We present the first of two articles on the small volume fraction limit of a nonlocal Cahn-Hilliard functional introduced  to model microphase separation of diblock copolymers. Here we focus attention on the sharp-interface version of the functional and 
consider a limit in which  the volume fraction tends to zero but the number of minority phases (called {\it particles}) remains $O(1)$. 
Using the language of $\Gamma$-convergence, we focus on   two levels of this convergence, and derive first and second order {\it effective}  energies, whose energy landscapes are simpler and more transparent. These limiting energies are only finite on weighted sums of delta functions, corresponding to the concentration of mass into `point particles'.
At the highest 
level, the effective energy is entirely local and contains information about the structure of each particle but no information about their spatial distribution. 
 At the next level we encounter a Coulomb-like 
interaction between the particles, which is responsible for the pattern formation. 
We present the results here in both  three and two dimensions.

\medskip
\textbf{Key words. } Nonlocal Cahn-Hilliard problem, Gamma-convergence, small volume-fraction limit, diblock copolymers. 

\medskip

\textbf{AMS subject classifications. }  49S05, 35K30, 35K55, 74N15

\end{abstract}

\tableofcontents
\section{Introduction}

This paper and its companion paper \cite{CP2}  are concerned with asymptotic properties of  
 two energy functionals. In either case, the order parameter $u$ is defined on the flat torus $\T^n=\R^n/\Z^n$, i.e.\ the square $[-\frac{1}{2},\frac{1}{2}]^n$ with periodic boundary conditions, and has two preferred states $u = 0$ and $u = 1$.
We will be concerned  with  both $n=2$ and $n = 3$.
The nonlocal Cahn-Hilliard functional is defined on $H^1(\R^n)$ and is given by
\begin{equation}
\label{def:Ees}
{\cal E}^\e (u) \;:=\; \e \,  \int_{\T^n} \,  |\nabla u|^2  \, d { x} \, \, + \,\, \frac1\e\int_{\T^n} u^2(1-u^2) \, d {x}
 \, \, + \,\, \sigma
 \, \Hmospace{u}{\T^n}^2.
\end{equation} 
Its sharp interface limit (in the sense of $\Gamma$-convergence),  defined on $BV(\T^n; \{0, 1\})$ (characteristic functions of finite perimeter), is given by~\cite{RW0}
\begin{equation}
\label{def:Fes}
{\cal E} (u) \;:=\;  \,   \int_{\T^n} \,  |\nabla u| 
 \, \, + \,\, \gamma
 \, \Hmospace{u}{\T^n}^2. 
\end{equation}
In both cases we wish to explore the behavior of these functionals, including the structure of their minimizers, in the limit of small volume fraction  $\dashint_{\T^n} u$.  The present article addresses the sharp interface functional (\ref{def:Fes}); the diffuse-interface functional $\cal E^\e$ is treated in the companion article~\cite{CP2}.

\subsection{The diblock copolymer problem}

The minimization of these nonlocal perturbations of standard perimeter problems are natural model problems  for pattern formation induced by competing short and long-range interactions~\cite{SA}. However, these energies have been introduced to the mathematics literature  because of their connection to a model for microphase separation of diblock copolymers~\cite{BF}. 

A diblock copolymer is a linear-chain molecule consisting of two sub-chains joined 
covalently 
to each other. 
One of the sub-chains is made of  $N_A$ monomers of type A and the other consists of $N_B$ monomers of type B. 
Below a critical temperature,  
even a weak repulsion between unlike monomers A and B induces a strong repulsion between 
the 
sub-chains, causing the
sub-chains to segregate. A macroscopic segregation where the sub-chains detach from one 
another  
cannot occur 
because the chains are chemically bonded. Rather, 
a phase separation on a mesoscopic scale with A and B-rich domains emerges. 
Depending on the material properties of the diblock macromolecules, the observed mesoscopic domains 
are highly regular periodic structures including lamellae, spheres, 
cylindrical tubes, and  double-gyroids (see for example \cite{BF}).  

A density-functional theory, 
first proposed by Ohta and Kawasaki \cite{OK}, 
gives rise to a \emph{nonlocal} free energy~\cite{NO} in which  the Cahn-Hilliard free energy is augmented by a long-range interaction term, which is associated with the
connectivity of the sub-chains in the diblock copolymer  
macromolecule:\footnote{See \cite{CR} for a derivation and  the relationship to the physical material parameters and basic models for inhomogeneous polymers. Usually the wells are taken to be $\pm 1$ representing pure phases of $A$ and $B$-rich regions. For convenience, we have rescaled to wells at $0$ and $1$.}
 \begin{equation}
 \label{OK}
 \frac{\e^2}{2} \,  \int_{\T^n} \,  |\nabla u|^2  \, d { x} \, \, + \,\,
 \int_{\T^n} \, 
u^2(1 - u^2)  \, d {x}
 \, \, + \,\, \frac{\sigma}{2} \,\, \| u - M \|_{H^{-1}(\T^n)}^2.
 \end{equation}
Often this energy is minimized under a mass or volume constraint
 \begin{equation}\label{mass-constraint}
  \dashint_{\T^n} u \, = \, M.
   \end{equation}
Here $u$ represents the relative monomer density, with $u = 0$ corresponding to a pure-$A$ region and $u = 1$ to a pure-$B$ region; the interpretation of $M$ is therefore the relative abundance of the $A$-parts of the molecules, or equivalently the volume fraction of the $A$-region. The constraint of fixed average $M$ reflects that in an experiment the composition of the molecules is part of the preparation and does not change during the course of the experiment. 
In~\pref{OK} the incentive for pattern formation is clear: the first term penalizes oscillation, the second term favors separation into regions of $u=0$ and $u=1$, and the third favors rapid oscillation. Under the mass constraint~\pref{mass-constraint} the three can not vanish simultaneously, and the net effect is to set a fine scale structure depending on $\e, \sigma$ and $M$.  Functional (\ref{def:Ees})  is simply a rescaled version of~\pref{OK} with the 
choice of $\sigma = \e \gamma$.  
Its sharp-interface (strong-segregation) limit, in the sense of $\Gamma$-convergence, is then given by 
(\ref{def:Fes}) \cite{RW0}.   

\subsection{Small volume fraction regime of the diblock copolymer problem}

The precise geometry of the phase distributions (i.e.\ the information contained in a minimizer of~\pref{OK}) depends largely on the volume fraction $M$. In fact, as explained in \cite{CPW}, the two natural parameters controlling the phase diagram are $\e \sqrt{\sigma}$ and $M$. When the combination  $\e \sqrt{\sigma}$ is small and $M$ is close to $0$ or $1$, numerical experiments~\cite{CPW} and experimental observations~\cite{BF} 
reveal structures resembling {\it small well-separated spherical regions of the minority phase}. We often refer to  such small regions as {\it particles}, and they are the central objects of study of this paper. 

Since we are interested in a regime of small volume fraction, it seems natural to seek asymptotic results.  It is the purpose of this article and its companion article \cite{CP2} to give a rigorous 
asymptotic description of the energy in a limit wherein the volume fraction tends to zero but where the number of particles in a minimizer remains $O(1)$. That is, we examine the limit where minimizers converge to weighted Dirac delta point measures and seek effective energetic descriptions for their positioning and local structure. 
Physically, our regime corresponds to diblock copolymers of very small molecular weight (ratio of $B$ monomers to $A$), and we envisage  either a melt of such diblock copolymers ({\it cf.} Figure \ref{Fig2}, bottom left) or a mixture/blend\footnote{ A similar nonlocal Cahn-Hilliard-like functional models  a blend of diblocks and homopolymers~\cite{CR2}.} of diblocks with homopolymers of type $A$ ({\it cf.} Figure \ref{Fig2}, bottom right). 
\begin{figure} 
\centerline{{\includegraphics[height=0.3in]{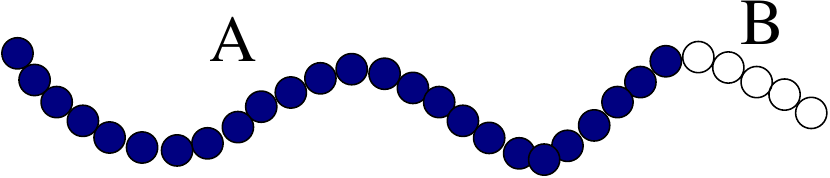}}}
\vspace{1cm}
\centerline{{\includegraphics[height=1.5in]{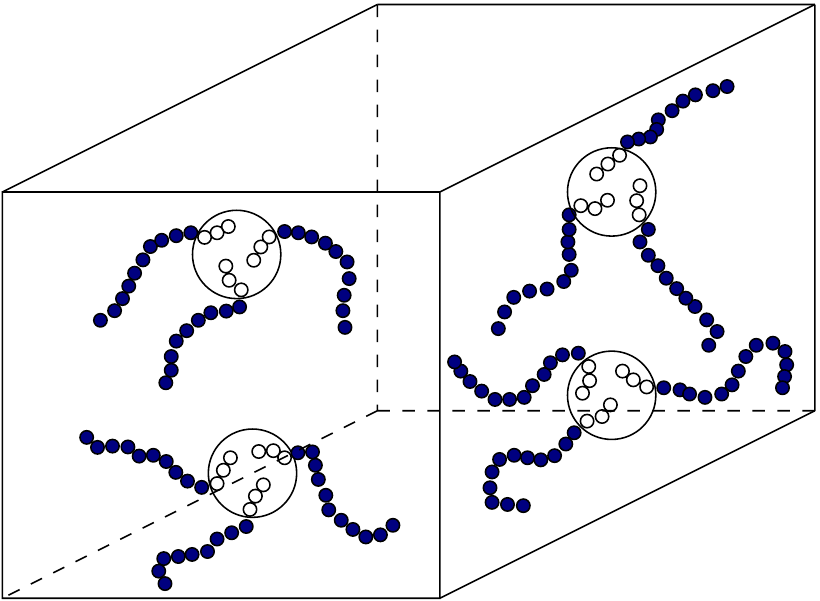}}\qquad \qquad {\includegraphics[height=1.5in]{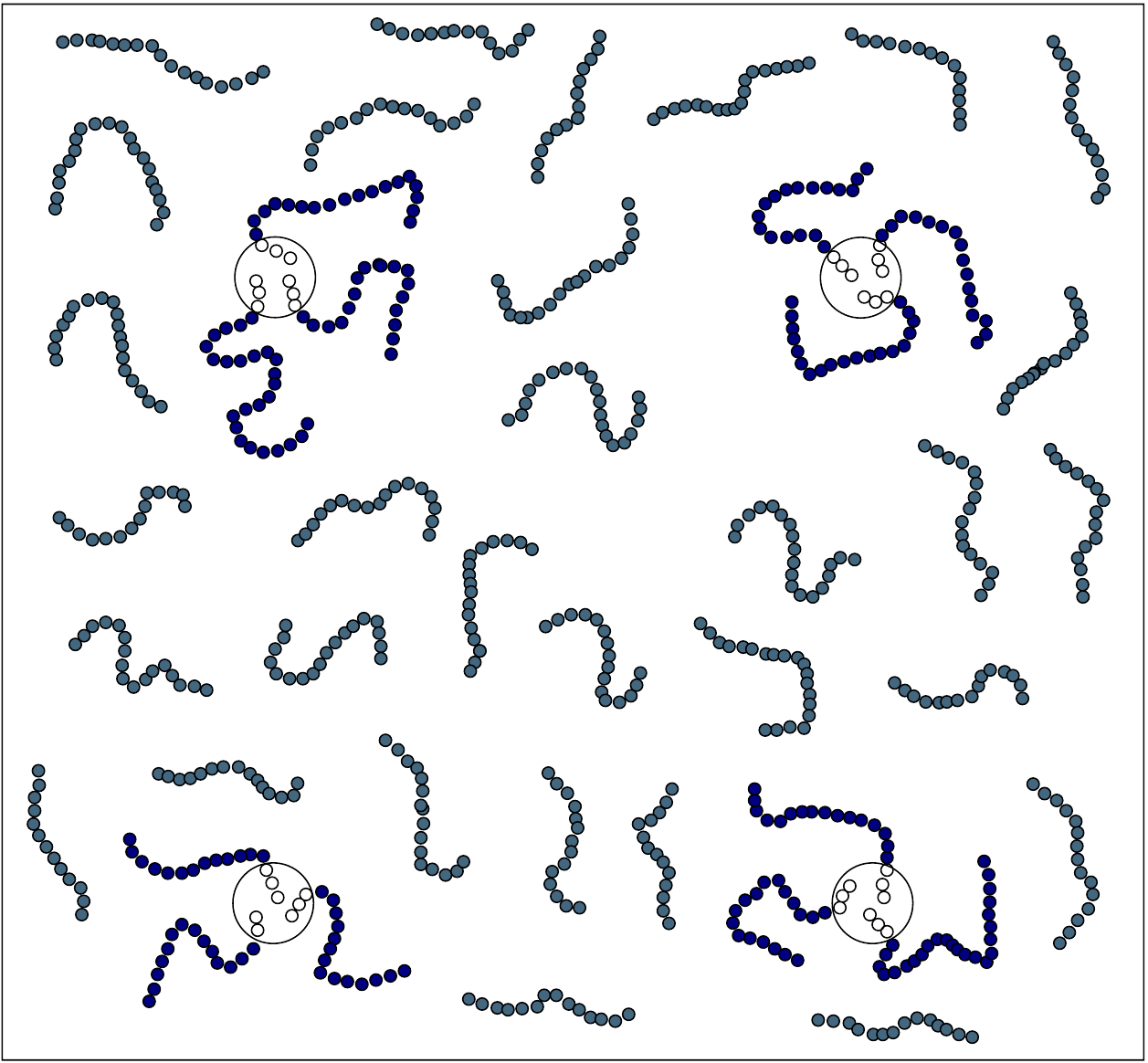}}}
\caption{Top: an AB diblock copolymer macromolecule of minority A composition.  Bottom: 2D schematic of two possible physical scenarios for the regime considered in this article. Left: microphase separation of {\it very long} diblock copolymers with minority A composition. Right: phase separation in a mixture/blend of diblock copolymers and homopolymers of another monomer species having relatively weak interactions with the A and B monomers.}
\label{Fig2}
\end{figure}

This regime is captured by the introduction of a small  parameter $\eta$ and the  appropriate rescaling of the free energy. To this end, we fix a mass parameter $M$ reflecting the total amount of minority phase mass in the limit of delta measures. 
We introduce a small coefficient to $M$, and consider 
phase distributions $u$ such that 
\begin{equation}\label{mass-u}
\int_{\T^n} u \, = \, \eta^n \, M, 
\end{equation}
where $n$ is either $2$ or $3$. 
We rescale $u$ as follows: 
\begin{equation}\label{mass-v}
 v \, : =\, \frac{u}{\eta^n}, 
\end{equation}
so that the new preferred values of $v$ are $0$ and ${1}/{\eta^n}$.  
We now write our free energy (either (\ref{def:Ees}) or (\ref{def:Fes})) in terms of $v$ and rescale in $\eta$ so that the minimum of the free energy remains $O(1)$ as $\eta \rightarrow 0$. 
In this article, we focus our attention on the sharp interface functional  (\ref{def:Fes}): that is,  we 
assume that we have already passed to the limit as $\e \rightarrow 0$, and therefore 
consider the small-volume-fraction  asymptotics of  (\ref{def:Fes}).  
In \cite{CP2} we will show how to extend the results of this paper to the diffuse-interface functional~\pref{def:Ees}, via a diagonal argument with a suitable slaving of $\e$ to $\eta$. 

In Section \ref{sec:degeneration}, we consider a  collection of small particles, 
determine the scaling of the $H^{-1}$-norm, and choose an appropriate scaling of $\gamma$ in terms of  $\eta$ so as to capture a nontrivial limit as $\eta$ tends to $0$. This analysis yields  
\[ {\cal E}(u) \, = \, \begin{cases} 
 \eta \,  \Fea(v) 
  &\text{if } n=2 \\ \\
\eta^2 \, \Feb (v)   &\text{if } n=3,
\end{cases} \]
where 
\begin{equation} \label{Ref-1}
\Fea(v)\, := \,   \eta \int_{\T^2} |\nabla v| 
+\invlogeta\Hmospacea v{\T^2} ^2 \quad {\rm defined \,\, for } \,\,\, v \in BV\left(\T^2; \{0, 1/\eta^2 \}\right)  
\end{equation}
and 
\begin{equation} \label{Ref-2}
\Feb(v)\, := \,  \eta \int_{\T^3} |\nabla v| 
+\eta \Hmospaceb v{\T^3} ^2 \quad {\rm defined \,\, for }  \,\,\, v \in BV\left(\T^3; \{0, 1/\eta^3 \}\right).   
\end{equation}
In both cases, $\Fea(v), \Feb(v)$ remain $O(1)$ as $\eta \rightarrow 0$. 

The aim of this paper is to describe the behavior of these two energies in the limit $\eta\to0$. This will be done in terms of a \emph{$\Gamma$-asymptotic expansion}~\cite{AB93} for  $\Fea(v)$ and $\Feb(v)$. 
That is, we characterize the first and second term in the expansion of, for example,  $\Feb$ of the form 
\[ 
\Feb \, = \, \Fzb \, + \, \eta \, \Hzb \, + \, \hbox{\rm higher order terms}.
\]

Our main results characterize these first- and second-order functionals  $\Fza, \Hza$ (respectively $\Fzb, \Hzb$)  
and  show that:  
\begin{itemize}
\item
At the highest level, the {\it effective energy} is entirely local, i.e.,  the energy {\it  focuses  
separately} on the energy of each particle, and is blind to the spatial distribution of the particles. 
The effective energy contains information about the local structure of the small particles. 
This is presented in three and two dimensions by Theorems \ref{3D-first-order -limit} and \ref{first-order -limit} respectively.

\item At the next level, we see a Coulomb-like 
interaction between the particles. It is this latter part of the energy which we expect 
enforces a periodic array of particles.\footnote{Proving this is a non-trivial matter; see Section~\ref{sec:discussion}} This is presented in three and two dimensions by 
Theorems~\ref{3D-th:sharpnextlevel} and \ref{th:sharpnextlevel} respectively.
\end{itemize}

\bigskip

The paper is organized as follows. Section \ref{sec:notation} contains some basic definitions. 
In  Section  \ref{sec:degeneration} we 
introduce the small parameter $\eta$, and begin with an analysis of the small-$\eta$ behavior of the $H^{-1}$ norm via the 
basic properties of the fundamental solution of the Laplacian in  three and two dimensions. 
We then determine the correct rescalings in dimensions two and three, and arrive at (\ref{Ref-1}) and (\ref{Ref-2}). 
In Section \ref{sec-3dresults} we state the $\Gamma$-convergence results in three dimensions,  together with some properties of the $\Gamma$-limits. The proofs of the three-dimensional results are given in Section~\ref{sec-3dproofs}. In Section~\ref{sec:2d} we state the analogous results in two dimensions and describe the modifications in the proofs. We conclude the paper with a discussion of our results in Section~\ref{sec:discussion}.

\section{Some definitions and notation}
\label{sec:notation}

Throughout this article, we use $\T^n=\R^n/\Z^n$ to denote the $n$-dimensional flat torus of unit volume. For the use of convolution we note that $\T^n$ is an additive group, with neutral element $0\in\T^n$ (the `origin' of $\T^n$).
  For $v \in BV (\T^n; \{0,1\})$ we denote by 
\[ \int_{\T^n} |\nabla v|\]
the total variation measure evaluated on $\T^n$, i.e. $\|\nabla u\| (\T^n)$ \cite{AFP}. 
Since $v$ is the characteristic function of some set $A$, it is simply 
a notion of its perimeter.  
Let $X$ denote the space of Radon measures on $\T^n$. For $ \mu_\eta, \mu \in X$, 
$ \mu_\eta \weak \mu$ denotes weak-$\ast$ measure convergence, i.e.
\[ 
\int_{\T^n} f \, d \mu_\eta \, \, \rightarrow \, \,   \int_{\T^n} f \, d \mu
\]
for all  $f \in C(\T^n)$.  We use the same notation for functions, i.e. 
when writing  $v_\eta\weak v_0$, we interpret $v_\eta$ and $v_0$ as measures whenever necessary.

We  introduce the Green's function  $G_{\T^n}$ for $-\Delta$ in dimension $n$ on $\T^n$. It is the solution of 
\[
- \Delta G_{\T^n} = \delta \, - \, 1,
\qquad\text{with}\qquad
\int_{\T^n} G_{\T^n} = 0,
\]
where $\delta$ is the Dirac delta function at the origin. 
In two dimensions, the Green's function $G_{\T^2}$ satisfies
\begin{equation}
\label{eq:G_T-g2}
G_{\T^2} (x) \, = \, -\frac1{2\pi} \log |x| \, + \, { g}^{(2)}(x)
\end{equation}
for all $x=(x_1,x_2)\in \R^2 $ with $\max\{|x_1|,|x_2|\}\leq 1/2$, where the function $g^{(2)}$ is continuous on $[-1/2,1/2]^2$ and  $C^\infty$ in a neighborhood of 
the origin.
In three dimensions, we have 
 \begin{equation}
\label{eq:G_T-g3}
G_{\T^3} (x) \, = \, \frac1{4 \pi |x|}\,  +\,  g^{(3)}(x)
\end{equation}
for all $x=(x_1,x_2, x_3)\in \R^3 $ with $\max\{|x_1|,|x_2|, |x_3|\}\leq 1/2$, where the function $g^{(3)}$  is again continuous on $[-1/2,1/2]^3$ and smooth in a neighbourhood of the origin.

For $\mu \in X$ such that $\mu (\T^n) = 0$, we may solve 
\[ 
-\Delta v \, = \, \mu,  
\]
in the sense of distributions on $\T^n$.
If $v \in H^1 (\T^n)$, then  $\mu \in H^{-1} (\T^n)$, and 
 \[ \|\mu\|_{H^{-1}(\T^n)}^2 \, := \, \int_{\T^n} |\nabla v|^2 \, dx.  \]
In particular, if $u \in L^2({\T^n})$ then $\left(u - \dashint u\right) \in H^{-1} (\T^n)$ and   
\[\Hmospace{u}{\T^n}^2 
 \, = \, \int_{\T^n} \int_{\T^n} u(x)u(y) \, G_{\T^n} (x-y)\, dx\, dy.
\]
Note that on the right-hand side we may write the function $u$ rather than its zero-average version $u-\dashint u$, since the function $G_{\T^n}$ itself is chosen to have zero average.

We will also need an expression for the $H^{-1}$ norm of the characteristic function of a set of finite perimeter on all of $\R^3$. To this end, let $f$ be such a function and  define 
\[ \|f\|_{H^{-1}({\R^3})}^2 \, = \, \int_{\R^3} |\nabla v|^2\, dx, \]
where $ -\Delta v = f$ on ${\R^3}$ with  $|v| \rightarrow 0$ as $|x| \rightarrow \infty$. 

\section{The small parameter $\eta$, degeneration of the  $H^{-1}$-norm, and the rescaling of (\ref{def:Fes})} 
\label{sec:degeneration}

We  introduce a new parameter $\eta$ controlling the vanishing volume. That is, we consider the total mass to 
be $\eta^n M$, for some fixed $M$, and rescale 
as \[ v_\eta = \frac{u}{\eta^n}.\]
This will facilitate the convergence to Dirac delta measures  of total mass  $M$ and will lead to functionals defined over functions 
$v_\eta: {\T^n} \rightarrow \{0, 1/\eta^n\}$. Note that this transforms the characteristic function $u$ of mass  $\eta^n M$ 
to a function $v_\eta$ with mass $M$,  i.e., 
\[ \int_{\T^n} u=\eta^n M \qquad {\rm while} \qquad 
 \int_{\T^n} v_\eta= M. \] 

On the other hand, throughout our analysis with functions taking on two values $\{0, 1/\eta^n\}$,  we 
 will often need to rescale back to characteristic functions in a way such that {\it the mass is conserved}. To this end, let us fix some notation which we will use throughout the sequel. Consider a  collection $v_\eta: {\T^n} \rightarrow \{0, 1/\eta^n\}$ of  components of the form 
\begin{equation}\label{formv}
v_\eta \, = \, \sum_i v_\eta^i, \qquad v_\eta^i \, = \, 
\frac{1}{\eta^n} \chi^{}_{A_i},
\end{equation}
 where the $A_i$ are disjoint, connected subsets of $\T^n$. 
Moreover, we will always be able to assume\footnote{We will show in the course of the proofs that this basic \emph{Ansatz} of separated connected sets  of small diameter  is in fact generic 
for a sequence of bounded mass and energy 
 ({\it cf.} Lemma \ref{lemma:other_sequence}).} without loss of generality that the 
$A_i$ have a diameter\footnote{For the definition of {\it diameter}, we first note that the torus $\T^n$ has an induced metric
\[
d(x,y) := \min\{|x-y-k|: k\in \Z^n\}\qquad\text{for }x,y\in \T^n.
\] The diameter of a set is then defined in the usual way,
\[
\diam A := \sup \{d(x,y): x,y\in A\}.
\]} less than $1/2$. 
Thus by  associating the torus $\T^n$ with 
$[-1/2,1/2]^n$, we may assume that the $A_i$ do not intersect the boundary $\partial [-1/2,1/2]^n$ and hence we may 
trivially extend $v_\eta^i $ to $\R^n$ by defining it to be zero for $x\not\in A_i$. 
In this extension the total variation of  $v_\eta^i $ calculated on the torus is preserved when calculated over all of $\R^n$. 
We may then transform the components $v_\eta^i$, to functions $z_\eta^i:\R^n\to\R$ by a mass-conservative rescaling that maps their amplitude to 1, i.e., set
\begin{equation}\label{def-z}
z_\eta^i(x) := \eta^n v_\eta^i(\eta x).
\end{equation}

\bigskip

We first consider the case $n=3$. 
Consider a sequence  of functions  $v_\eta$ of the form~\pref{formv}.  
The norm $\Hmo {v_\eta}^2$ can be split up as
\begin{align}
\label{eq:rewrite-Hmo1}
\Hmospace {v_\eta}{\T^3}^2 &\; =\;  \sum_{i=1}^\infty \int_{\T^3}\int_{\T^3} v_\eta^i(x)v_\eta^i(y) \, G_{\T^3}(x-y)\, dxdy \nonumber \\
&  \qquad + \,\, \sum_{\substack{i,j=1\\i\not=j}}^{\infty}
\int_{\T^3}\int_{\T^3} v_\eta^i(x)v_\eta^j(y) \, G_{\T^3}(x-y)\, dxdy.
\end{align}
As we shall see ({\it cf.} the proof of Theorem~\ref{3D-first-order -limit}), 
in the limit $\eta\to0$ it is the  first sum, containing the diagonal terms, that dominates.
For these terms we have 
\begin{align}\label{3D-decomp}
\hbox{}&\hspace*{-5mm}\Hmospace{v_\eta^i}{\Tb}^2 
=\int_\Tb \int_\Tb v_\eta^i(x)v_\eta^i(y) \, G_\Tb (x-y)\, dxdy \nonumber \\
&=\int_{\Tb} \int_{\Tb} v_\eta^i(x)v_\eta^i(y)\,\frac1{4\pi} |x-y|^{-1}\, dxdy
  {}+ \int_\Tb \int_\Tb v_\eta^i(x)v_\eta^i(y) \, g^{(3)}(x-y)\, dxdy\nonumber\\
&=\eta^{-6}\int_{\R^3} \int_{\R^3} z_\eta^i(x/\eta)z_\eta^i(y/\eta)
  \,\frac1{4\pi} |x-y|^{-1}\, dxdy+{}\nonumber\\
&\hspace{1cm} 
  {}+ \int_\Tb \int_\Tb v_\eta^i(x)v_\eta^i(y)\,  g^{(3)}(x-y)\, dxdy\nonumber\\
&=\eta^{-1} \int_{\R^3}\int_{\R^3} 
    z_\eta^i(\xi)z_\eta^i(\zeta) \,\frac1{4\pi}|\xi-\zeta|^{-1}\, d\xi d\zeta
  {}+ \int_\Tb \int_\Tb v_\eta^i(x)v_\eta^i(y) \, g^{(3)}(x-y)\, dxdy\nonumber\\
&= \eta^{-1}\|z_\eta^i\|_{H^{-1}(\R^3)}^2 
  + \int_\Tb \int_\Tb v_\eta^i(x)v_\eta^i(y) \, g^{(3)}(x-y)\, dxdy.
\end{align}
This calculation shows that if the transformed components $z_\eta^i$ converge in a `reasonable' sense, then the dominant behavior of the $H^{-1}$-norm of the original sequence $v$ is given by the term
\[
\frac1\eta \sum_i \|z_\eta^i\|_{H^{-1}(\R^3)}^2 = O\Bigl(\frac1\eta\Bigr).
\]
This argument shows how in the leading-order term only information about the local behavior of each of the separate components enters. The position information is lost, at this level; we will recover this in the study of the next level of approximation.

Turning to the energy, we calculate
\begin{align}
{\cal E} (u) & \;=\;    \int_{\T^3} \,  |\nabla u| 
 \, \, + \,\, \gamma
 \, \Hmospace{u}{\T^3}^2 \notag\\
 & \;=\;  {\eta^3} \,   \int_{\T^3} \,  |\nabla v|   \, \, + \,\, {\gamma}\, {\eta^6} 
  \, \Hmospace{v}{\T^3}^2 \notag\\
&\; =\;  {\eta^2} \left( {\eta} \,  \int_{\T^3} \,  |\nabla v|   \, \, + \,\, \gamma\, \eta^4 
  \, \Hmospace{v}{\T^3}^2 \right). 
\label{rescaling-E-3d}
\end{align}
Note that if $v_\eta$ consists of $N=O(1)$ particles of typical size $O(\eta)$, then 
\[
 {\eta} \,  \int_{\T^3} \,  |\nabla v|   \, \sim \, O(1).
\]
Prompted by (\ref{3D-decomp}),  we expect to make both terms in~\pref{rescaling-E-3d} of the same order by setting
\[
\gamma = \frac{1}{\eta^3}.
\]
Therefore we define
\[
\Feb(v) := \frac{1}{\eta^{2}}{\cal E} (u)  \, = \, \begin{cases}
  \eta\,  \int_\Tb |\nabla v| 
+\eta \,  \Hmospace v\Tb ^2 
  &\text{if } v\in BV(\Tb; \{0,1/\eta^3\})\\
  \infty &\text{otherwise}.
\end{cases}
\]

\medskip

We now switch to the case $n = 2$.  
Here the critical scaling of the $H^{-1}$ in two dimensions causes a different behavior:\begin{align}
\qquad&\hskip-2em\int_\Ta\int_\Ta v_\eta^i(x)v_\eta^i(y) \, G_\Ta (x-y)\, dxdy = \notag\\
 &=-\frac1{2\pi}\int_\Ta\int_\Ta v_\eta^i(x)v_\eta^i(y) \log|x-y|\, dxdy+  \int_\Ta\int_\Ta v_\eta^i(x)v_\eta^i(y) \, g^{(2)}(x-y)\, dxdy\notag\\
 &=-\frac1{2\pi}\int_{\R^2}\int_{\R^2} z_\eta^i(x)z_\eta^i(y) \log\bigl|\eta(x-y)\bigr|\, dxdy
   +  \int_\Ta\int_\Ta v_\eta^i(x)v_\eta^i(y) \, g^{(2)}(x-y)\, dxdy\notag\\
 &=-\frac1{2\pi} \left(\int_{\R^2} z_\eta^i\right)^2 \log\eta-\frac1{2\pi} \int_{\R^2}\int_{\R^2} z_\eta^i(x)z_\eta^i(y) \log |x-y|\, dxdy \notag\\
 &\qquad \qquad+  \int_\Ta\int_\Ta v_\eta^i(x)v_\eta^i(y) \, g^{(2)} (x-y)\, dxdy\notag\\
 &=\frac1{2\pi} \left(\int_{\R^2} z_\eta^i\right)^2 \logeta
    -\frac1{2\pi} \int_{\R^2}\int_{\R^2} z_\eta^i(x)z_\eta^i(y) \log |x-y|\, dxdy \notag\\
 &\qquad\qquad +  \int_\Ta\int_\Ta v_\eta^i(x)v_\eta^i(y) \, g^{(2)}(x-y)\, dxdy .
 \label{eq:rewrite-Hmo2}
\end{align}
By this calculation we expect that the dominant behavior of the $H^{-1}$-norm of the original sequence $v$ is given by the term
\begin{equation}
\label{eq:local_L1Hmo}
\sum_i \frac1{2\pi} \left(\int_{\R^2} z_\eta^i\right)^2 \logeta
\, =\,  \frac\logeta{2\pi} \sum_i \left(\int_{\Ta} v_\eta^i\right)^2.
\end{equation}
Note how, in contrast to the three-dimensional case, only the distribution of the mass of $v$ over the different components enters in the limit behavior. 
Note also that the critical scaling here is $\logeta$. 

Following the same line as for the three-dimensional case, and setting
\begin{equation}
\label{def:u-v-scaling}
v = \frac u{\eta^2},
\end{equation} 
we calculate
\begin{eqnarray*}
{\cal E} (u) & = &    \int_{\T^2} \,  |\nabla u| 
 \, \, + \,\, \gamma
 \, \Hmospace{u}{\T^2}^2 \\
 & = & {\eta^2} \,   \int_{\T^2} \,  |\nabla v|   \, \, + \,\, {\gamma}\, {\eta^4} 
  \, \Hmospace{v}{\T^2}^2 \\
& = & {\eta} \left( {\eta} \,  \int_{\T^2} \,  |\nabla v|   \, \, + \,\, \gamma\, \eta^3 
  \, \Hmospace{v}{\T^2}^2 \right). 
\end{eqnarray*}
Following (\ref{eq:rewrite-Hmo2}), (\ref{eq:local_L1Hmo}), 
in order to capture a nontrivial limit we must choose 
\[ \gamma \, = \, \frac{1}{\logeta\, \eta^{3}}. \]
With this choice of $\gamma$, we define 
\[
\Fea(v) := \frac1{\eta}{\cal E} (u)  \, = \, \begin{cases}
 \,  \eta\int_\Ta |\nabla v| 
+\invlogeta\Hmospace v\Ta ^2 
  &\text{if } v\in BV(\Ta; \{0,1/\eta^2\})\\
  \infty &\text{otherwise}. 
\end{cases}
\]


\section{Statement of the main results in three dimensions}
\label{sec-3dresults}

We now state precisely the $\Gamma$-convergence results for $\Feb$ in three dimensions. 
Both our $\Gamma$-limits will be defined over countable sums of weighted Dirac delta measures  $\sum_{i=1}^\infty m^i \delta_{x^i}$.  
We start with the first-order limit. To this end, let us introduce 
the function 
\begin{align}
\fzb(m) &:= \inf\left\{\int_{\R^3} |\nabla z| + \|z\|_{H^{-1}(\R^3)}^2: 
   z\in BV(\R^3;\{0,1\}), \int_{\R^3} z = m \right\}. 
\label{def:fe3}
\end{align}
 
We also define the limit functional\footnote{The definition of~$\Fzb$ requires the point mass positions $x^i$ to be distinct, and the reader might wonder why this is necessary. Consider the following functional, which might be seen as an alternative,
\[
\widetilde \Fzb(v) := \begin{cases}
\sum_{i=1}^\infty \fzb(m^i) & \text{if } v = \sum_{i=1}^\infty m^i \delta_{x^i} \text{ with } m^i\geq 0,\\
\infty & \text{otherwise}.
\end{cases}
\]
This functional is actually not well defined: the function $v$ will have many representations (of the type $\delta = a\delta + (1-a)\delta$, for any $a\in(0,1)$) that will not give rise to the same value of the functional. Therefore the functional $\widetilde \Fzb$ is a functional of the representation, not of the limit measure $v$. The restriction to distinct $x^i$ eliminates this dependence on representation.}
\begin{align*}
\Fzb(v) := \begin{cases}
\sum_{i=1}^\infty \fzb(m^i) & \text{if } v = \sum_{i=1}^\infty m^i \delta_{x^i}, \,\,\,  \{x^i\} \text{ distinct, and }m^i\geq0\\
\infty & \text{otherwise}.
\end{cases}
\end{align*}
\begin{remark}\label{remark-lsc}
Under weak convergence multiple point masses may join to form a single point mass. The functional $\Fzb$ is lower-semicontinuous under such a change if and only if the function $\fzb$ satisfies the related inequality
\begin{equation}\label{e-0-subadd}
 \fzb\Bigl(\sum_{i=1}^\infty m^i\Bigr) \, \le \, \sum_{i=1}^\infty \fzb(m^i). 
 \end{equation}
The function $\fzb$ does satisfy this property, as can be recognized by taking approximating functions $z^i$ and translating them far from each other; the sum $\sum_i z^i$ is admissible and its limiting energy, in the limit of large separation, is the sum  of the individual energies.
\end{remark}
 Having  introduced the limit functional $\Fzb$, we are now in a position to state the first  main result of this paper. 

\begin{theorem}
\label{3D-first-order -limit}
Within the space $X$, we have 
\[
\Feb \, \stackrel{\Gamma}\longrightarrow \, \Fzb \qquad {\rm as} \quad \eta \rightarrow 0.
\]
That is, 
 
\begin{itemize} 
\item (Condition 1 -- the lower bound and compactness) 
Let $v_\eta$ be a sequence such that the sequence of energies
$\Feb(v_\eta)$ is bounded. Then
(up to a subsequence) $v_\eta\weak v_0$, $\supp v_0$ is countable, and 
\begin{equation}
\label{lb:sharp-interface}
\liminf_{\eta\to0} \Feb(v_\eta) \geq \Fzb(v_0).
\end{equation}
\item (Condition 2 -- the upper bound) 
Let $\Fzb(v_0)<\infty$. 
Then there exists a sequence $v_\eta \weak v_0$ such that 
\[
\limsup_{\eta\to0} \Feb(v_\eta) \leq \Fzb(v_0).
\]
\end{itemize} 
\end{theorem}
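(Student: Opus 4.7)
Both bounds rest on the algebraic identity derived in \pref{3D-decomp}: if $v_\eta$ is decomposed into components $v_\eta^i=\eta^{-3}\chi^{}_{A_i}$ supported on connected sets of diameter less than $1/2$, and $z_\eta^i(x)=\eta^3 v_\eta^i(\eta x)$ is the mass-preserving rescaling to unit amplitude on $\R^3$, then each diagonal contribution satisfies
\[
\eta\int_{\T^3}|\nabla v_\eta^i|+\eta\int_{\T^3}\!\int_{\T^3} v_\eta^i(x)v_\eta^i(y)G_{\T^3}(x-y)\,dx\,dy = \int_{\R^3}|\nabla z_\eta^i|+\|z_\eta^i\|_{H^{-1}(\R^3)}^2+O(\eta),
\]
which is precisely the quantity minimised in the variational definition \pref{def:fe3} of $\fzb(\int z_\eta^i)$. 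At leading order the energy therefore decouples into a sum of local energies, each a function of the mass of a single component only; this is exactly what makes the limit functional $\Fzb$ blind to the atom positions $x^i$, and the whole argument amounts to making this decoupling rigorous on both sides.

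\textbf{Liminf and compactness.} Given $\Feb(v_\eta)\leq C$, the perimeter bound $\int|\nabla v_\eta|\leq C/\eta$ combined with the isoperimetric inequality on $\T^3$ yields $\sum_i (m_\eta^i)^{2/3}\leq C$, where $m_\eta^i:=\int v_\eta^i$ and $i$ ranges over the connected components of $\{v_\eta>0\}$. This is enough, after extracting a subsequence, to obtain $v_\eta\weak v_0=\sum_i m^i\delta_{x^i}$ with countable support. Before applying the rescaling identity I need a preliminary lemma, alluded to in the footnote after \pref{formv}, replacing $v_\eta$ by an energetically equivalent sequence whose components all have diameter below $1/2$; this is achieved by a slicing argument in thin annuli around each concentration point, picking the slice on which the surface perimeter and $H^{-1}$-cost of disconnection are least. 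Once in this form, the diagonal piece of \pref{eq:rewrite-Hmo1} yields, via Fatou's lemma together with the lower semi-continuity of $\fzb$ (a direct-method exercise on its definition), the bound $\liminf\sum_i[\int|\nabla z_\eta^i|+\|z_\eta^i\|_{H^{-1}(\R^3)}^2]\geq\sum_i\fzb(m^i)$. The off-diagonal sum $\sum_{i\neq j}\int_{\T^3}\!\int_{\T^3} v_\eta^i(x)v_\eta^j(y)G_{\T^3}(x-y)\,dx\,dy$ is bounded below by $-C(\int v_\eta)^2$ via the pointwise bound $G_{\T^3}\geq-\|g^{(3)}\|_\infty$ and therefore vanishes after the $\eta$ prefactor; clusters of components whose positions coincide in the limit merge into single atoms of $v_0$ and are handled by the subadditivity \pref{e-0-subadd}.

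\textbf{Limsup and main obstacle.} For the upper bound, truncate $v_0=\sum_{i=1}^\infty m^i\delta_{x^i}$ at $N$ atoms (letting $N\to\infty$ diagonally using convergence of $\sum_i\fzb(m^i)$) and, for each $i\leq N$ and tolerance $\delta>0$, choose a competitor $z^i\in BV(\R^3;\{0,1\})$ for \pref{def:fe3} with $\int z^i=m^i$, compact support (inner approximation), and energy within $\delta$ of $\fzb(m^i)$. The translates $v_\eta^i(x)=\eta^{-3}z^i((x-x^i)/\eta)$ have pairwise disjoint supports on $\T^3$ for $\eta$ small, so the diagonal identity furnishes the main contribution, while the cross terms, evaluated on compact sets bounded away from the origin (since the $x^i$ are distinct), contribute $O(\eta)$ after the prefactor; sending $\eta\to 0$, $\delta\to 0$, and $N\to\infty$ completes the construction. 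The chief obstacle throughout is the small-diameter reduction itself: nothing in $\Feb$ immediately rules out filamentary or elongated components, and one must combine the perimeter estimate, the $H^{-1}$ information, and a careful slicing scheme to bring the sequence into the form required by the rescaling identity; once that reduction is in hand, the remainder is bookkeeping around \pref{3D-decomp} and the subadditivity of $\fzb$.
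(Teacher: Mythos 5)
Your overall architecture coincides with the paper's: the rescaling identity \pref{3D-decomp} to decouple the energy into local pieces each bounded below by $\fzb(m^i_\eta)$, a reduction to finitely many well-separated components of small diameter obtained by cutting on a well-chosen sphere in an annulus around each concentration point (this is exactly the content of Lemmas \ref{lemma:other_sequence}--\ref{lemma:alpha}), the pointwise bound $G_{\T^3}\geq \inf g^{(3)}$ to dispose of the off-diagonal terms, and near-minimizers of \pref{def:fe3} with compact support, rescaled and translated to the $x^i$, for the upper bound. Two small remarks on the lower bound: what you actually need from $\fzb$ is continuity \emph{and monotonicity} (so that $\liminf\fzb(m^i_\eta)\geq\fzb(m^i)$ when only $\liminf m^i_\eta\geq m^i$ is known), not lower semicontinuity; and when you discard all but $n$ components you must track the discarded mass $r_\eta=v_\eta-\tilde v_\eta$ through the $H^{-1}$ term, which is why the paper proves the quantitative estimate \pref{ineq:perimeter-quantitative} and then takes a supremum over $n$ at the end. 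Both are routine to repair.

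The one step that does not go through as written is compactness. From $\eta\int|\nabla v_\eta|\leq C$ and the isoperimetric inequality you correctly get $\sum_i(m^i_\eta)^{2/3}\leq C$ over the \emph{connected components}, which bounds the number of components carrying mass $\geq\delta$ and shows the remainder carries mass $O(\delta^{1/3})$. But this does not yet yield that the weak limit is atomic: a single connected component with $\int|\nabla z^i_\eta|\leq C$ after rescaling can still have diameter of order one on the torus (thin filaments cost negligible perimeter and volume), and nothing in the component count prevents its mass from spreading over a set of positive dimension in the limit. Ruling that out requires showing that distributing mass $m$ over $k$ essentially disjoint lumps forces perimeter $\gtrsim k^{1/3}m^{2/3}$ \emph{inside a single component}, i.e.\ a covering/dyadic localization argument rather than a count of connected components --- which is precisely what Lions' second concentration-compactness lemma packages. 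The paper's Lemma~\ref{lemma:compactness} applies that lemma to $w_\eta=\eta v_\eta$, using that $w_\eta\to0$ in $L^1$, $\nabla w_\eta$ is bounded in $L^1$, and $v_\eta=w_\eta^{3/2}$ is bounded in $L^1$ ($3/2$ being the critical Sobolev exponent for $BV(\R^3)$). Either cite that lemma or supply the localization argument explicitly; the component-wise isoperimetric bound alone is not sufficient.
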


Note that the compactness condition which usually accompanies a Gamma-convergence result has been built into Condition 1 (the lower bound). 
The fact that sequences with bounded energy $\Feb$ converge to a collection of delta functions is partly so by construction: the functions $v_\eta$ are positive, have uniformly bounded mass, and only take values either $0$ or $1/\eta^3$. Since $\eta\to0$, the size of the support of $v_\eta$ shrinks to zero, and along a subsequence $v_\eta$ converges in the sense of measures to a limit measure; in line with the discussion above, this limit measure is shown to be a sum of Dirac delta measures (Lemma~\ref{lemma:compactness}). 

We have the following properties of $\fzb$, and a characterization of minimizers of $\Fzb$. The proof is presented in Section \ref{sec:proofs2}. 
\begin{lemma}
\label{lemma:char_limit}
\begin{enumerate}
\item For every $a>0$, $\fzb'$ is non-negative and bounded from above on $[a,\infty)$. 
\item $\fzb$ is strictly concave on $[0,2\pi ]$. 
\item 
If $\{m^i\}_{i\in\N}$ with $\sum_i m^i<\infty$ satisfies
\begin{equation}
\label{cond:mass-minimizer}
\sum_{i=1}^\infty \fzb(m^i) = \fzb\Bigl(\sum_{i=1}^\infty m^i\Bigr),
\end{equation}
then
 only a finite number of $m^i$ are non-zero.
\end{enumerate}
\end{lemma}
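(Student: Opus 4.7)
The approach rests on the mass-scaling symmetry: any $z\in BV(\R^3;\{0,1\})$ with $\int z = m$ can be written as $z(x) = y(m^{1/3}x)$ for some unit-mass $y\in BV(\R^3;\{0,1\})$, yielding $\int|\nabla z| = m^{2/3}\int|\nabla y|$ and $\|z\|_{H^{-1}(\R^3)}^2 = m^{5/3}\|y\|_{H^{-1}(\R^3)}^2$. Consequently,
\[
\fzb(m) = \inf_{y}\, g_y(m), \qquad g_y(m) := m^{2/3}P(y) + m^{5/3}\|y\|_{H^{-1}(\R^3)}^2,
\]
with $P(y) := \int_{\R^3}|\nabla y|$ and the infimum taken over unit-mass $y$. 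All three claims will follow from this representation and the identity $g_y''(m) = (9m^{4/3})^{-1}\bigl(10m\|y\|_{H^{-1}}^2 - 2P(y)\bigr)$.

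For (1), monotonicity is immediate since each $g_y$ is a sum of two increasing powers of $m$, so $\fzb(m_1)\leq g_y(m_1)\leq g_y(m_2)\leq\fzb(m_2)+\e$ for $y$ near-optimal at $m_2$. For the Lipschitz upper bound on $[a,\infty)$, I would first prove linear growth $\fzb(m)\leq C_a m$ by placing $\lfloor m/a\rfloor$ far-separated copies of a near-optimal mass-$a$ configuration, the off-diagonal $H^{-1}$ cross terms vanishing because $G_{\R^3}(x)=(4\pi|x|)^{-1}\to 0$ at infinity. Then an infinitesimal upscale of a near-optimal $y$ at $m\geq a$ gives $\fzb(m+\e) - \fzb(m) \leq ((1+\e/m)^{2/3}-1)P(y) + ((1+\e/m)^{5/3}-1)\|y\|_{H^{-1}}^2 \leq \tfrac{5}{3}\fzb(m)(\e/m)+o(\e) \leq \tfrac{5}{3}C_a\,\e + o(\e)$.

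The heart of the lemma is (2): concavity of $\fzb$ on $[0,2\pi]$ reduces, via the formula for $g_y''$, to the universal inequality
\[
\frac{P(y)}{\|y\|_{H^{-1}(\R^3)}^2} \,\geq\, 10\pi \qquad\text{for every unit-mass }y,
\]
with equality for the unit ball. This combines two rearrangement facts: the isoperimetric inequality $P(y)\geq P_\star := (36\pi)^{1/3}$, and Riesz rearrangement applied to the symmetric-decreasing kernel $(4\pi|x|)^{-1}$, which gives $\|y\|_{H^{-1}}^2 \leq H_\star := \tfrac{8\pi}{15}(\tfrac{3}{4\pi})^{5/3}$; a direct ball calculation confirms $P_\star/H_\star = 10\pi$. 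Feeding these bounds in, $g_y''(m)\leq (9m^{4/3})^{-1}(10m H_\star - 2P_\star) \leq -C(m_-,m_+)<0$ uniformly in $y$ on every compact $[m_-,m_+]\subset(0,2\pi)$. Strict concavity then passes through the infimum: choosing $y$ near-optimal for $\fzb$ at $m := \lambda m_1+(1-\lambda)m_2$ with $m_i\in[m_-,m_+]$ and combining the uniform concavity of $g_y$ with $g_y(m_i)\geq \fzb(m_i)$ yields $\fzb(m) \geq \lambda\fzb(m_1)+(1-\lambda)\fzb(m_2) + \tfrac{C}{2}\lambda(1-\lambda)(m_2-m_1)^2$; letting $[m_-,m_+]$ exhaust $(0,2\pi)$ and using $\fzb(0)=0$ at the endpoint gives the full statement.

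For (3), strict concavity on $[0,2\pi]$ together with $\fzb(0)=0$ yields strict subadditivity there: for $a,b>0$ with $a+b\leq 2\pi$, the chord from $(0,0)$ to $(a+b,\fzb(a+b))$ lies strictly below the graph at the interior points, so $\fzb(a)+\fzb(b)>\fzb(a+b)$. If infinitely many $m^i$ were positive in~\pref{cond:mass-minimizer}, then $m^i\to 0$ would force the existence of two indices $i_1\neq i_2$ with $m^{i_1}+m^{i_2}<2\pi$; combining the strict pairwise inequality with the countable subadditivity of $\fzb$ (Remark~\ref{remark-lsc}) applied to all remaining terms produces $\fzb(\sum_i m^i) < \sum_i \fzb(m^i)$, contradicting the hypothesis. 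The technical crux throughout is the sharp rearrangement bound $P_\star/H_\star = 10\pi$ appearing in (2); this single calculation pins the concavity threshold to the precise value $2\pi$ and underpins both (2) and (3).
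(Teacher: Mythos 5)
Your proposal is correct and follows essentially the same route as the paper: the mass-scaling $m\mapsto(1+\e/m)^{1/3}$ (equivalently your reduction to unit-mass profiles $g_y(m)=m^{2/3}P(y)+m^{5/3}\|y\|_{H^{-1}}^2$), the extremality of balls for both the perimeter (isoperimetric inequality) and the $H^{-1}$-norm (Riesz rearrangement) to pin the concavity threshold at $m=2\pi$, and linear growth of $\fzb$ via far-separated copies to bound the derivative. The one genuine variation is in part~3: the paper redistributes mass between two small particles and invokes $\fzb''(m^1)+\fzb''(m^2)<0$, which tacitly assumes two-fold differentiability of $\fzb$, whereas you merge two small masses and use only strict subadditivity on $[0,2\pi]$ (strict concavity plus $\fzb(0)=0$); your version is slightly more robust. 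Two cosmetic points: the substitution should read $z(x)=y(m^{-1/3}x)$ (your displayed scalings $m^{2/3}P$, $m^{5/3}\|\cdot\|^2$ are nevertheless correct), and the $\lfloor m/a\rfloor$-copies construction leaves a remainder mass in $[0,a)$ that must be accounted for by one extra particle, using boundedness of $\fzb$ on $[0,a]$.
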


Note that  the limit functional $\Fzb$ is blind to positional information: the value of $\Fzb$ is independent of the positions $x^i$ of the point masses. In order to capture this positional information, we consider the next level of approximation, by subtracting the minimum of $\Fzb$ and renormalizing the result. To this end, note that  
among all measures of mass $M$, the global minimizer of $\Fzb$ is given by
\[
\min \left\{\Fzb(v): \int_{\Tb} v = M\right\} = \fzb(M).
\]
We recover the next term in the expansion as the limit of $\Feb - \fzb$, appropriately rescaled, that is of the functional
\[ 
\Heb(v_\eta) := \eta^{-1}\left[ \Feb(v_\eta) - \fzb\left(\int_\Tb v_\eta\right)\right] .
\]
If this second-order energy remains bounded in the limit $\eta\to0$, then the limiting object $v_0 = \sum_i m^i \delta_{x^i}$ necessarily has two properties:
\begin{enumerate}
\item The limiting mass weights $\{ m^i\}$ satisfy~\pref{cond:mass-minimizer};
\item For each $m^i$, the minimization problem defining $\fzb(m^i)$ has a minimizer. 
\end{enumerate}
The first property above arises from the condition that $\Feb(v_\eta)$ converges to its minimal value as $\eta\to0$. The second is slightly more subtle, and can be understood by the following formal scaling argument. 

In the course of the proof we construct truncated versions of $v_\eta$, called $v_\eta^i$, each of which is localized around the corresponding limiting point $x^i$ and rescaled as in~\pref{def-z} to a function $z_\eta^i$. For each $i$ the sequence $z_\eta^i$ is a minimizing sequence for the  minimization problem $\fzb(m^i)$, and the scaling of $\Heb$ implies that the energy $\Feb(v_\eta)$ converges to the limiting value at a rate of at least $O(\eta)$. In addition, since $v_\eta^i$ converges to a delta function, the typical spatial extent of $\supp v_\eta^i$ is of order $o(1)$, and therefore the spatial extent of $\supp z_\eta^i$ is of order $o(1/\eta)$. If the sequence $z_\eta^i$ does not converge, however, then it splits up into separate parts; the interaction between these parts is penalized by the $H^{-1}$-norm at the rate of $1/d$, where $d$ is the distance between the separating parts. Since $d=o(1/\eta)$, the energy penalty associated with separation scales larger than $O(\eta)$, which contradicts the convergence rate mentioned above. 

This is no coincidence; the scaling of $\Heb$ has been chosen just so that the interaction between objects that are separated by $O(1)$-distances in the original variable $x$ contributes an $O(1)$ amount to this second-level energy. If they are asymptotically closer, then the interaction blows up. 

Motivated by these remarks we define the set of admissible limit sequences
\[
{\cal M}
:= \left\{\{m^i\}_{i\in\N}: m^i\geq 0, \ \text{satisfying \pref{cond:mass-minimizer}, such that $\fzb(m^i)$ admits a minimizer for each $i$}\right\}.
\]  
The limiting energy functional $\Hzb$ can already be recognized in the decomposition given by~\pref{eq:rewrite-Hmo1} and \pref{3D-decomp}. 
We show in the proof in Section~\ref{sec-3dproofs} that the interfacial term in the energy $\Feb$ is completely cancelled by the corresponding term in $\fzb$, as is the highest-order term in the expansion of $\Hmo {v_\eta}^2$. What remains is a combination of 
cross terms,
\[
\sum_{\substack{i,j=1\\i\not=j}}^{\infty}
\int_\Tb\int_\Tb v_\eta^i(x)v_\eta^j(y) G_\Tb(x-y)\, dxdy,
\]
and lower-order self-interaction parts of the $H^{-1}$-norm.
\[
\sum_{i=1}^\infty  \int_\Tb\int_\Tb v_\eta^i(x)v_\eta^i(y) g^{(3)} (x-y)\, dxdy.
\]
With these remarks we define
\[
\Hzb (v) := \begin{cases}
\displaystyle
\sum_{i=1}^\infty g^{(3)}(0) \, (m^i)^2 \, +\, & \\
 \qquad \sum_{{i\not=j}} m^im^j\, G_\Tb (x^i-x^j)
  & \displaystyle\text{if } v = \sum_{i=1}^n m^i \delta_{x^i} \text{ with } \{x^i\} \text{ distinct,  } \{m^i\}\in{\cal M}\\
\infty & \text{otherwise}.
\end{cases}
\]

We have:  
\begin{theorem}
\label{3D-th:sharpnextlevel}
Within the space $X$, we have 
\[
\Heb \stackrel{\Gamma}\longrightarrow \Hzb \qquad {\rm as} \quad \eta \rightarrow 0.
\]
That is, Conditions $1$ and $2$ of Theorem~$\ref{3D-first-order -limit}$ hold with $\Feb$ and $\Fzb$ replaced with 
$\Heb$ and $\Hzb$. 
\end{theorem}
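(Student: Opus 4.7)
My plan is to leverage Theorem~\ref{3D-first-order -limit} and the exact per-particle decomposition~(\ref{3D-decomp}) to peel off the first-order contributions and identify the second-order remainder directly. Given a sequence with $\Heb(v_\eta)$ bounded, $\Feb(v_\eta) = \fzb(\int v_\eta) + O(\eta)$ is bounded, so (up to a subsequence) $v_\eta \weak v_0 = \sum_i m^i\delta_{x^i}$ with distinct $x^i$, and the first-order inequality gives $\liminf_\eta \Feb(v_\eta) \geq \sum_i \fzb(m^i)$. Sandwiching this against $\lim_\eta \Feb(v_\eta) = \fzb(M)$ and the subadditivity~(\ref{e-0-subadd}) forces the first-order saturation~(\ref{cond:mass-minimizer}).

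Next I would decompose $v_\eta = \sum_i v_\eta^i$ by grouping the connected components of $\{v_\eta>0\}$ around each limiting atom $x^i$, and rescale each piece to $z_\eta^i$ as in~(\ref{def-z}). Expanding $\Heb(v_\eta)$ via~(\ref{eq:rewrite-Hmo1}) and~(\ref{3D-decomp}) and using $\sum_i\fzb(m^i)=\fzb(M)$ yields the identity
\begin{align*}
\Heb(v_\eta) &= \frac1\eta \sum_i \Bigl[\int_{\R^3} |\nabla z_\eta^i| + \|z_\eta^i\|_{H^{-1}(\R^3)}^2 - \fzb(m^i)\Bigr] \\
&\quad {}+ \sum_i \int_\Tb\!\!\int_\Tb v_\eta^i(x)v_\eta^i(y)\, g^{(3)}(x-y)\,dx\,dy \\
&\quad {}+ \sum_{i\ne j} \int_\Tb\!\!\int_\Tb v_\eta^i(x)v_\eta^j(y)\, G_\Tb(x-y)\,dx\,dy,
\end{align*}
with a small correction for $\int z_\eta^i$ differing from $m^i$, absorbed using the Lipschitz behaviour of $\fzb$ on compact subintervals (from Lemma~\ref{lemma:char_limit}(1)). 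Each bracket in the first sum is non-negative, and the weak convergence $v_\eta^i \weak m^i\delta_{x^i}$ together with the continuity of $g^{(3)}$ near the origin and of $G_\Tb$ off the diagonal $\{x^i=x^j\}$ drives the remaining two sums to $\sum_i g^{(3)}(0)(m^i)^2$ and $\sum_{i\ne j} m^i m^j G_\Tb(x^i-x^j)$, delivering $\liminf \Heb(v_\eta) \geq \Hzb(v_0)$ once $\{m^i\}\in{\cal M}$ is known.

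To establish $\{m^i\}\in{\cal M}$, I would apply concentration-compactness to each sequence $z_\eta^i$: if $z_\eta^i$ split into two parts at mutual distance $d_\eta\to\infty$ in the $z$-variable, then in the $x$-variable the two pieces would be at distance $\eta d_\eta$, which must vanish because $v_\eta^i\weak m^i\delta_{x^i}$ is a single Dirac. The resulting cross-interaction contributes $\sim m_1 m_2/(4\pi d_\eta)$ to $\|z_\eta^i\|_{H^{-1}(\R^3)}^2$, hence a term of order $1/(\eta d_\eta)\to\infty$ to $\Heb$, contradicting boundedness; vanishing is ruled out by mass conservation. Thus each $z_\eta^i$ converges, up to a translation in $\R^3$, to a minimizer of $\fzb(m^i)$, so $\fzb(m^i)$ is attained.

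For the upper bound, given $v_0=\sum_i m^i\delta_{x^i}$ with $\{m^i\}\in{\cal M}$, I choose minimizers $z^i$ of $\fzb(m^i)$ (truncated to compactly supported competitors at negligible energy cost, if needed) and set $v_\eta^i(x):=\eta^{-3} z^i((x-x^i)/\eta)$. For $\eta$ small the supports are disjoint inside $\Tb$; substituting into the exact identity above makes the first bracket equal zero, while the remaining two sums pass to the desired limits, yielding $\limsup \Heb(v_\eta) \leq \Hzb(v_0)$. The main obstacle is the non-splitting step, which hinges on the precise matching between the $\R^3$-Coulomb decay rate $1/d$ and the $O(\eta)$ budget from $\Heb$; the fact that $\supp v_\eta^i$ shrinks to $\{x^i\}$ is what supplies the required $\eta d_\eta\to 0$ and converts the Coulomb interaction into a divergent contribution to $\Heb$.
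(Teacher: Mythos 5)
Your proposal follows essentially the same route as the paper: the same expansion of $\Heb$ into per-particle first-order brackets, a $g^{(3)}$ self-interaction sum and a Coulomb cross-term sum, the same derivation of \pref{cond:mass-minimizer} from non-negativity of those brackets, the same non-splitting mechanism (a Coulomb cross-interaction of order $1/d_\eta$ set against the $O(\eta)$ budget, with $\eta d_\eta\to0$ forced by convergence of $v_\eta^i$ to a single Dirac mass), and the same upper-bound construction from minimizers of $\fzb(m^i)$. The only substantive difference is that the paper formalizes your dichotomy step through the second-moment bound \pref{cond:second-moments} of Lemma~\ref{lemma:other_sequence} and a rescaling by the spread $\rho_\eta$, which also supplies the tightness that your phrase ``vanishing is ruled out by mass conservation'' glosses over (one needs the perimeter bound, via Lemma~\ref{lemma:truncation}, in addition to mass conservation).
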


\medskip

\noindent The interesting aspects of this limit functional $\Hzb$ are
\begin{itemize}
\item  In contrast to $\Fzb$, the functional $\Hzb$ is only finite on \emph{finite} collections of point masses, which in addition satisfy two constraints: the collection should satisfy~\pref{cond:mass-minimizer}, and each weight $m^i$ should be such that the corresponding minimization problem~\pref{def:fe3} is achieved. In Section~\ref{sec:discussion} we discuss these properties further. 
\item The main component of $\Hzb$ is the two-point interaction energy
\[
\sum_{i,j:\ i\not=j} m^{i}m^{j}G_\Tb(x^i-x^j).
\]
This two-point interaction energy is known as a Coulomb interaction energy, by reference to electrostatics. A similar limit functional also appeared in~\cite{RW5}.
\end{itemize}



\section{Proofs of Theorems \ref{3D-first-order -limit} and \ref{3D-th:sharpnextlevel} } 
\label{sec-3dproofs}

\subsection{Concentration into point measures}
\begin{lemma}[Compactness]
\label{lemma:compactness}
Let $v_\eta$ be a sequence in $BV (\Tb; \{0, 1/\eta^3\})$ such that both $\int_\Tb v_\eta$ and $\Feb(v_\eta)$ are uniformly bounded. Then there exists a subsequence such that $v_\eta\weakto v_0$ as measures, where 
\begin{equation}
\label{eq:structure_v0}
v_0 := \sum_{i=1}^\infty m^i \,  \delta_{x^i},
\end{equation}
with $m^i\geq0$ and $x^i\in \Tb$ distinct. 
\end{lemma}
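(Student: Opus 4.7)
The plan combines weak-$*$ compactness with a component-by-component collapse to point masses, driven by the three-dimensional isoperimetric inequality. First, since $v_\eta\geq 0$ with $\int_\Tb v_\eta$ uniformly bounded, the Banach--Alaoglu theorem on $C(\Tb)^*$ produces, along a subsequence, a nonnegative Radon measure $v_0$ with $v_\eta \weak v_0$. Because $\supp v_\eta\subseteq A_\eta := \{v_\eta = 1/\eta^3\}$ has Lebesgue measure $|A_\eta| = \eta^3\!\int_\Tb v_\eta = O(\eta^3)$, the limit $v_0$ is automatically singular with respect to Lebesgue; the remaining task is to show $v_0$ is purely atomic.

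Next, decompose $A_\eta = \bigsqcup_i A_\eta^i$ into its connected components and set $m_\eta^i := |A_\eta^i|/\eta^3$ and $P_\eta^i := \operatorname{Per}(A_\eta^i)$. Regarding each $A_\eta^i$ as a subset of $\R^3$ (via the small-diameter ansatz of the footnote), the three-dimensional isoperimetric inequality gives $(m_\eta^i)^{2/3}\leq CP_\eta^i/\eta^2$. Summing and using $\eta\int_\Tb|\nabla v_\eta|\leq\Feb(v_\eta)\leq C$ yields the key summability
\[
\sum_i (m_\eta^i)^{2/3}\leq C,
\]
so that the decreasing rearrangement $m_\eta^{(k)}\leq Ck^{-3/2}$ ensures that only finitely many components satisfy $m_\eta^i\geq\delta$, while the rest collectively carry mass at most $C\delta^{1/3}\to 0$ as $\delta\downarrow 0$.

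Each of these components then collapses to a single point. The geometric input is a diameter estimate of the form $\diam(A)\cdot|A|\leq C\operatorname{Per}(A)^2$ for connected subsets of $\R^3$, which is sharp on balls and tubes and can be verified on disk- and starfish-like archetypes. Applied componentwise and summed it gives
\[
\sum_i \diam(A_\eta^i)\,m_\eta^i \;\leq\; C\eta^{-3}\Bigl(\sum_i P_\eta^i\Bigr)^2 \;\leq\; C\eta\longrightarrow 0.
\]
Picking any $x_\eta^i\in A_\eta^i$, for every $\phi\in C^1(\Tb)$ one concludes
\[
\Bigl|\int_\Tb \phi\,v_\eta \;-\; \sum_i m_\eta^i \phi(x_\eta^i)\Bigr|\;\leq\;\|\nabla\phi\|_\infty\sum_i\diam(A_\eta^i)\,m_\eta^i\longrightarrow 0,
\]
so that $v_\eta$ shares its weak-$*$ limit with the purely atomic sequence $\sum_i m_\eta^i\delta_{x_\eta^i}$. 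A diagonal extraction ($\delta$-by-$\delta$ on the finitely many significant components, then letting $\delta\downarrow 0$) identifies $v_0=\sum_i m^i\delta_{x^i}$ with $m^i\geq 0$ and distinct $x^i$.

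The principal obstacle is the diameter--volume--perimeter inequality in three dimensions; one route is a direct slicing argument along the diameter axis combined with the 2D isoperimetric inequality, while an alternative is to establish in advance the modification result alluded to in the footnote, which would replace $v_\eta$ by an equivalent bounded-energy sequence whose connected components already have vanishing diameters, rendering the collapse step immediate.
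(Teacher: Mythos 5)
Your opening steps are sound: weak-$*$ compactness gives a subsequential nonnegative limit measure $v_0$, and the isoperimetric bound $\sum_i (m_\eta^i)^{2/3}\leq C$ is a correct and genuinely useful consequence of $\eta\int_{\Tb}|\nabla v_\eta|\leq C$. The argument breaks, however, exactly at the step you flag as the principal obstacle: the inequality $\diam(A)\cdot|A|\leq C\bigl(\int|\nabla\chi_A^{}|\bigr)^2$ is \emph{false} for connected sets in $\R^3$, and no slicing argument will rescue it. Consider a dumbbell: two unit balls at distance $L$ joined by a tube of radius $1/L$. Its volume is bounded below and its perimeter bounded above as $L\to\infty$, but its diameter is $L$, so the left-hand side grows linearly in $L$ while the right-hand side stays bounded. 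Rescaled by $\eta$, the same example defeats the whole strategy, not just the inequality: let $A_\eta$ consist of two balls of radius $\eta$ centred at two fixed distinct points of $\Tb$, joined by a tube of radius $\eta^2$. Then $v_\eta=\eta^{-3}\chi_{A_\eta}^{}$ has bounded mass and bounded energy $\Feb(v_\eta)$ (the tube contributes $\eta\cdot\eta^{-3}\cdot O(\eta^{2})=O(1)$ to the perimeter term and $O(\eta)$ to the mass), its support is a \emph{single} connected component of diameter $O(1)$, and $v_\eta$ converges to a sum of \emph{two} point masses. Hence $\sum_i\diam(A_\eta^i)\,m_\eta^i$ does not tend to zero, and the premise that each connected component collapses to one point is simply wrong in three dimensions. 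This is precisely the critical-scaling issue that makes the 2D shortcut \pref{2d-conc-est} unavailable in 3D: in the plane the perimeter of a connected set controls its diameter; in space it does not.

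The paper's proof avoids connected components entirely. It sets $w_\eta:=\eta v_\eta$, observes that $w_\eta\to0$ in $L^1(\Tb)$, that $\nabla w_\eta=\eta\nabla v_\eta$ is bounded in $L^1$, and that $w_\eta^{3/2}=v_\eta$ is bounded in $L^1$; Lions' second concentration-compactness lemma (Lemma~I.1(i) of \cite{L1}, with $p=1$ and the critical exponent $q=3/2=1^*$ in $\R^3$) then forces the weak-$*$ limit of $w_\eta^{3/2}$ to be a countable sum of point masses. Note also that your fallback --- ``establish the modification result of the footnote first'' --- is circular here: the paper's truncation result (Lemma~\ref{lemma:truncation}) takes the atomic structure of the limit as a \emph{hypothesis}, so it cannot be used to prove this compactness lemma. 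If you wish to avoid citing Lions, you would need to reprove the relevant case of concentration compactness directly; your $\ell^{2/3}$ summability of the component masses is in the right spirit but is not by itself sufficient to rule out diffuse limits.
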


Note that we often write ``a sequence $v_\eta$" instead of ``a sequence $\eta_n\to0$ and a sequence~$v_n$" whenever this does not lead to confusion. 
The essential tool to prove convergence to delta measures  is 
the {\it Second Concentration Compactness Lemma} of Lions \cite{L1}.

\begin{proof} The functions $w_\eta := \eta v_\eta$ satisfy $w_\eta\to 0$ in $L^1(\Tb)$, and $|\nabla w_\eta|\, =\, \eta\, |\nabla v_\eta|$  bounded in $L^1(\Tb)$. On the other hand, by definition, one has  $w_\eta^{3/2}\, =\, v_\eta$ which is  bounded in $L^1(\Tb)$. Hence we extract a subsequence such that $v_\eta \weakto v_0$ as measures. Lemma~I.1  (i) of~\cite{L1} (with $m=p=1, q=3/2$) then implies that $v_0$ has the structure~\pref{eq:structure_v0}.
\end{proof}

\bigskip

The proof of the two lower-bound inequalities uses a partition of $\supp v_\eta$ into disjoint sets with positive pairwise distance. This division implies the inequality
\[
\int_{\Tb} |\nabla v_\eta| = \sum_i \int_\Tb |\nabla v_\eta^i|,
\]
and is an important step towards the separation of local and global effects in the functionals.
The following lemma provides this partition into disjoint particles. 

\begin{lemma}
\label{lemma:other_sequence}
Continue under the conditions of the previous lemma. For the purpose of proving a lower bound on $\Feb(v_\eta)$ and $\Heb(v_\eta)$ we can assume without loss of generality that for some $n\in \N$
\[
v_\eta = \sum_{i=1}^n v_\eta^i 
\]
with $\wliminf_{\eta\to0} v_\eta^i\geq  m_0^i\delta_{x^i}$ as measures, $\dist(\supp v_\eta^i,\supp v_\eta^j)>0$ for all $i\not=j$, and $\diam\supp v_\eta^i<1/4$. In addition, for the lower bound on $\Heb(v_\eta)$ we can assume that for each $i$, $v_\eta^i\weakto m_0^i \delta_{x^i}$ and that  there exist $\xi_\eta^i\in \T^3$ and a constant $C^i>0$ such that
\begin{equation}
\label{cond:second-moments}
\int_\Tb |x-\xi_\eta^i|^2 v_\eta^i(x)\, dx \leq C^i\eta^2.
\end{equation}
\end{lemma}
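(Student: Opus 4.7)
The plan is to replace $v_\eta$ with a modified sequence $\tilde v_\eta$ having the advertised structure, while changing the energy by at most $o_\alpha(1)$—small enough that proving the lower bound for $\tilde v_\eta$ gives the lower bound for $v_\eta$. The quantitative inputs are the perimeter bound $P(A_\eta)\le C\eta^2$, where $A_\eta := \{v_\eta = 1/\eta^3\}$ (which follows from $\eta\int|\nabla v_\eta|\le C$ together with $\int|\nabla v_\eta| = \eta^{-3}P(A_\eta)$), and the isoperimetric inequality applied on the connected components of $A_\eta$.

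First I would decompose $A_\eta = \bigsqcup_k A_\eta^{(k)}$ into its at-most-countable collection of connected components, mutually at positive distance. Perimeter additivity together with the 3D isoperimetric inequality $|A_\eta^{(k)}|^{2/3} \le cP(A_\eta^{(k)})$ yields
\[
\sum_k \mu_k^{2/3} \le C, \qquad\text{where } \mu_k := |A_\eta^{(k)}|/\eta^3.
\]
By Hölder, the total mass carried by components with $\mu_k<\alpha$ is at most $C\alpha^{1/3}$, while the number of components with $\mu_k\ge\alpha$ is at most $C\alpha^{-2/3}$. Fixing a small $\alpha$, discarding the small components, and assigning the remaining finitely many to the $v_\eta^i$'s yields a finite decomposition with the required separation. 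Along a subsequence each $v_\eta^i$ converges weakly to a measure, supported on a single point (by the diameter control below), so $v_\eta^i\weakto m_0^i\delta_{x^i}$, delivering the liminf/weak-convergence condition.

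The diameter condition $\diam\supp v_\eta^i<1/4$ is the most delicate point, because a single connected component could in principle have macroscopic diameter (for example a barbell with thin neck is compatible with $P\sim\eta^2$ and $|E|\sim\eta^3$). To handle this I would use a coarea slicing argument: averaging over $t$, cuts of $A_\eta$ along hyperplanes $\{x\cdot\nu=t\}$ have cross-sectional area $O(|A_\eta|)=O(\eta^3)$ on average, which is lower-order with respect to the perimeter budget $O(\eta^2)$. Therefore finitely many well-chosen cuts reduce each surviving piece to diameter below $1/4$, at negligible perimeter cost. For the $\Heb$ case I would further cluster components sitting within mutual distance $O(\eta)$ so that each $v_\eta^i$'s support has $O(\eta)$-diameter; with $\xi_\eta^i$ chosen as the centroid, the second-moment estimate reduces to $\int|x-\xi_\eta^i|^2 v_\eta^i \le (\diam\supp v_\eta^i)^2 \int v_\eta^i = O(\eta^2)$.

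The hard part will be justifying the energy comparison $\Feb(\tilde v_\eta)\le\Feb(v_\eta)+o_\alpha(1)$ (and the analogous bound at the $\Heb$ level). The perimeter contribution is monotone under discarding of mass, but the $H^{-1}$ term is not, so one must estimate the $H^{-1}$-cost of the discarded piece, which has total mass $O(\alpha^{1/3})$. Using the splittings in \pref{eq:rewrite-Hmo1} and \pref{3D-decomp}, together with the integrability of $G_{\T^3}$ on $\T^3$, this reduces to a Cauchy–Schwarz/Fubini estimate that vanishes as $\alpha\to0$; sending $\alpha\to0$ at the end of the main lower-bound arguments yields the lemma. The $\Heb$-level comparison is sharper by a factor of $\eta^{-1}$ and demands quantitative use of the expansion $G_{\T^3}(x) = (4\pi|x|)^{-1}+g^{(3)}(x)$ near the origin to control the interaction between kept and discarded pieces across the coarea cuts.
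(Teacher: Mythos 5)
Your first-order ($\Feb$) reduction is plausible and genuinely different from the paper's: the paper does not decompose into connected components at all, but instead truncates $v_\eta$ by intersecting with balls $\beta_\eta^i B^i$ centered at the limit points $x^i$, where the radius $\beta_\eta^i$ is chosen via a relative isoperimetric inequality on spheres (Lemmas~\ref{lemma-0} and~\ref{lemma:alpha}) so that the truncation \emph{decreases} the perimeter by a quantitative amount, $\int|\nabla\tilde v_\eta|\le\int|\nabla v_\eta|-C\|v_\eta-\tilde v_\eta\|_{L^{3/2}}$. That strict gain is the whole point of the construction, and it is what your proposal is missing. At the $\Heb$ level the comparison must be accurate to $o(\eta)$ in $\Feb$, equivalently $o(1)$ in $\Heb$. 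Your coarea cut through a cross-section of area $O(\eta^3)$ (the best an averaging argument can give when the cut must be found in an $O(1)$ interval) adds $O(\eta^3)$ to $P(A_\eta)$, hence $O(\eta)$ to $\eta\int|\nabla v_\eta|$, hence $O(1)$ to $\Heb$ --- not negligible. Worse, discarding mass $\delta_\eta=\int r_\eta$ changes $\fzb(\int_\Tb v_\eta)$ by $O(\delta_\eta)$ and the cross terms of the $H^{-1}$ norm by $O(\delta_\eta)$ as well, i.e.\ $\Heb$ changes by $O(\eta^{-1}\delta_\eta)$; with your fixed cutoff $\alpha$ the discarded mass is $O(\alpha^{1/3})$, which does not even vanish with $\eta$, and no expansion of $G_{\Tb}$ can repair the $\fzb$ term. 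The paper absorbs exactly these $O(\eta^{-1}\delta_\eta)$ losses with the perimeter gain $-C\eta^{-1}\delta_\eta^{2/3}$, which dominates since $\delta_\eta\to0$ (see~\pref{est:other-seq:Heb}); your scheme has no compensating negative term.

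The second-moment condition~\pref{cond:second-moments} is also not the afterthought you make it. It does not reduce to ``cluster components within mutual distance $O(\eta)$'': a single admissible component can be a tube of length $O(1)$ and radius $O(\eta^2)$ (perimeter $O(\eta^2)$, volume $O(\eta^4)$) with second moment of order $\eta\gg\eta^2$; and two pieces converging to the \emph{same} $x^i$ may sit at an intermediate distance $\eta\ll d_\eta\ll1$, which your clustering cannot detect. The paper rules these out by a separate contradiction argument: rescale by $\rho_\eta:=(\int|x-\xi_\eta|^2v_\eta^i)^{1/2}\gg\eta$, apply concentration--compactness to the rescaled sequence $\zeta_\eta$, show it splits into at least two delta masses, and observe that the resulting cross term $\tfrac1{2\pi}\int\int\zeta_\eta^1(x)\zeta_\eta^2(y)|x-y|^{-1}$ stays bounded away from zero, contradicting~\pref{limit:truncation-zeta}. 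Your proposal contains no substitute for this energetic argument. Finally, two smaller points: indecomposable components of a set of finite perimeter in $\R^3$ need not be at positive pairwise distance, so the separation $\dist(\supp v_\eta^i,\supp v_\eta^j)>0$ is not automatic from your decomposition; and the paper's route deliberately avoids the component decomposition in 3d precisely because of these difficulties (it is only in 2d, via~\pref{2d-conc-est}, that the component-wise argument is straightforward).
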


The proof of Lemma \ref{lemma:other_sequence} is given in detail in Section \ref{sec:proofs2}. A central ingredient is the following truncation lemma. Here $\Omega$ is either the torus $\T^3$ or an open bounded subset of~$\R^3$.

\begin{lemma}
\label{lemma:truncation}
Let $n\in\N$ be fixed, let $a_k\to\infty$, and let $u_k\in BV(\Omega;\{0,a_k\})$ satisfy
\begin{equation}
\label{ass:small_perimeter}
\int_\Omega |\nabla u_k| = o(a_k),
\end{equation}
and converges weakly in $X$ to a weighted sum
\[
\sum_{i=1}^\infty m^i\delta_{x^i},
\]
where $m^i\geq0$ and the $x^i\in \Omega$ are distinct.  Then there exist components $u_k^i\in BV(\Omega;\{0,a_k\})$, $i=1\dots n$, satisfying $\diam\supp u_k^i\leq 1/4$, $\inf_k \inf_{i\not= j} \dist(\supp u_k^i,\supp u_k^j)>0$, and
\begin{equation}
\label{prop:tilde-u-k}
\wliminf_{k\to\infty} u_k^i\geq m^i\delta_{x^i}, 
\end{equation}
in the sense of distributions. 
In addition, the modified sequence $\tilde u_k = \sum_i u_k^i$ satisfies
\begin{enumerate}
\item \label{enum:lemma-truncation-1}
$\tilde u_k\leq u_k$ for all $k$;
\item \label{enum:lemma-truncation-2}
$\limsup_{k\to\infty}\int (u_k-\tilde u_k)\leq\sum_{i=n+1}^\infty m^i$;
\item There exists a constant $C=C(n)>0$ such that for all $k$
\begin{equation}
\label{ineq:perimeter-quantitative}
\int |\nabla \tilde u_k| \leq \int |\nabla u_k| - C\|u_k-\tilde u_k\|_{L^{3/2}(\Omega)}.
\end{equation}
\end{enumerate}
\end{lemma}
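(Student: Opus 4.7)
I would define the truncated components by sharp cutoffs by balls around each $x^i$, with the radius chosen so that the spheres cut cleanly through the set $E_k:=\{u_k=a_k\}$. Note that $\int_\Omega|\nabla u_k|=a_k\,\mathrm{Per}(E_k)$ and $\int_\Omega u_k=a_k|E_k|$, so hypothesis (\ref{ass:small_perimeter}) together with boundedness of $\int u_k$ (inherited from the weak limit having finite total mass) yields both $\mathrm{Per}(E_k)\to0$ and $|E_k|=O(1/a_k)\to0$; these two facts drive the whole argument.

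Fix $0<\rho_{\min}<\rho_{\max}<1/8$ small enough that the closed balls $\overline{B(x^i,\rho_{\max})}$, $i=1,\ldots,n$, are pairwise disjoint and contain no $x^j$ with $j>n$. For each $k$ I will choose a single radius $\rho_k\in(\rho_{\min},\rho_{\max})$ and set $u_k^i:=u_k\,\chi_{B(x^i,\rho_k)}$, with $\tilde u_k:=\sum_{i=1}^n u_k^i$. Properties~\ref{enum:lemma-truncation-1} and~\ref{enum:lemma-truncation-2} are then immediate: $\tilde u_k\le u_k$ by disjointness of the balls, and $\int u_k^i\to m^i$ by sandwiching $\chi_{B(x^i,\rho_k)}$ between continuous cutoffs localized respectively to $B(x^i,\rho_{\min})$ and $B(x^i,\rho_{\max})$ and using the weak-$\ast$ convergence $u_k\weakto\sum_j m^j\delta_{x^j}$. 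The same sandwich also delivers (\ref{prop:tilde-u-k}), while $\diam\supp u_k^i\le2\rho_{\max}<1/4$ and $\dist(\supp u_k^i,\supp u_k^j)\ge\min_{i\neq j}|x^i-x^j|-2\rho_{\max}>0$ uniformly in $k$.

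The hard step is choosing $\rho_k$ so that the perimeter inequality (\ref{ineq:perimeter-quantitative}) holds. Writing $F_k(r):=E_k\setminus\bigcup_i B(x^i,r)$ and $\sigma_k(r):=\sum_i\H^2(E_k\cap\partial B(x^i,r))$, the boundary decomposition
$\partial^*\bigl(E_k\cap\bigcup_i B(x^i,r)\bigr)=\bigl(\partial^*E_k\cap\bigcup_i B(x^i,r)\bigr)\cup\bigl(E_k\cap\bigcup_i\partial B(x^i,r)\bigr)$
(modulo $\H^2$-null sets) gives
\[
\int_\Omega|\nabla u_k|-\int_\Omega|\nabla\tilde u_k|\,=\,a_k\H^2\Bigl(\partial^*E_k\setminus\bigcup_i B(x^i,\rho_k)\Bigr)-a_k\sigma_k(\rho_k).
\]
The isoperimetric inequality applied to $F_k(\rho_k)$ (valid in either $\R^3$ or $\T^3$ since $|F_k|\to0$) yields
\[
\|u_k-\tilde u_k\|_{L^{3/2}(\Omega)}=a_k|F_k(\rho_k)|^{2/3}\le a_k C_{\mathrm{iso}}\Bigl[\H^2\Bigl(\partial^*E_k\setminus\bigcup_i B(x^i,\rho_k)\Bigr)+\sigma_k(\rho_k)\Bigr].
\]
Combining the two, inequality (\ref{ineq:perimeter-quantitative}) with $C=1/(2C_{\mathrm{iso}})$ follows provided $\sigma_k(\rho_k)\le|F_k(\rho_k)|^{2/3}/(4C_{\mathrm{iso}})$.

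To produce such a $\rho_k$ I argue by a coarea/ODE estimate on $W(r):=|F_k(r)|$. Coarea gives $-W'(r)=\sigma_k(r)$ a.e.\ on $(\rho_{\min},\rho_{\max})$. If the desired bound failed for a.e.\ $r$, then $-W'(r)/W(r)^{2/3}>1/(4C_{\mathrm{iso}})$ throughout, and integrating $-W'/W^{2/3}$ would yield $3|E_k|^{1/3}\ge 3W(\rho_{\min})^{1/3}-3W(\rho_{\max})^{1/3}>(\rho_{\max}-\rho_{\min})/(4C_{\mathrm{iso}})$, contradicting $|E_k|\to0$ for $k$ large. Hence a valid $\rho_k$ exists for all sufficiently large $k$; for the finitely many remaining $k$ I simply take $u_k^i\equiv0$, at which point (\ref{ineq:perimeter-quantitative}) reduces to the $BV$-to-$L^{3/2}$ isoperimetric inequality applied to $u_k$ itself (and $C$ is shrunk if needed). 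The main obstacle is precisely this quantitative interplay between the boundary area gained from sharp cutting and the isoperimetric lower bound on the discarded mass; the ODE estimate is what converts the smallness of $|E_k|$ into a uniform-in-$k$ choice of a clean cutting radius.
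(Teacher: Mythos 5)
Your construction is correct, and the key quantitative step is argued by a genuinely different route than the paper's. The paper finds, for each ball $B^i$ separately, a cutting radius $\beta_k^i$ via a relative isoperimetric inequality on spheres (its Lemmas \ref{lemma-0} and \ref{lemma:alpha}), arranging that the area cut through $\supp u_k$ is at most half the perimeter in the exterior annulus; it then converts the discarded perimeter into $\|u_k-\tilde u_k\|_{L^{3/2}}$ by a Poincar\'e--Sobolev inequality on the perforated domain $A_k=\Omega\setminus\bigcup_i\overline{\beta_k^iB^i}$, which requires an extra argument (uniformly bounded diffeomorphisms between the $A_k$) to make the Sobolev constant $k$-independent, plus an inverse-triangle-inequality step to remove the mean. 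You instead compare the cut area $\sigma_k(\rho_k)$ directly with $|F_k(\rho_k)|^{2/3}$, find a good common radius by integrating the differential inequality $-W'/W^{2/3}>c$ against the smallness of $|E_k|$, and then apply the isoperimetric inequality to the \emph{discarded} set $F_k$ itself. This buys a shorter and more elementary proof: no projection lemma, no relative isoperimetric inequality on $S^2$, and no perforated-domain Sobolev constants; the paper's version, on the other hand, localizes the choice of radius to each ball and yields the slightly stronger pointwise control $2\|\partial B_{\beta_k^i}\|(\supp u_k)\le a_k^{-1}\|\nabla u_k\|(B^i\setminus\overline{\beta_k^iB^i})$, which is in the same spirit but packaged differently. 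Both ultimately rest on the same two facts you isolate at the start, $\mathrm{Per}(E_k)\to0$ and $|E_k|\to0$.

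Two minor points to tidy up. First, when $\Omega$ is a bounded open subset of $\R^3$ (or $\T^3$), the inequality $|F_k|^{2/3}\le C\,\mathrm{Per}(F_k;\Omega)$ is the \emph{relative} isoperimetric inequality, valid here because $|F_k|\to0$; you gesture at this but should say it, since $\partial^*F_k$ could a priori meet $\partial\Omega$. Second, you cannot in general choose $\rho_{\max}$ so that the balls ``contain no $x^j$ with $j>n$'' (the tail points may accumulate at some $x^i$, $i\le n$), and consequently $\int u_k^i$ need not converge to $m^i$; but only the lower bound $\liminf_k\int u_k^i\ge m^i$ is needed for \pref{prop:tilde-u-k} and for property~\ref{enum:lemma-truncation-2}, and that is exactly what your sandwich argument gives, so nothing is lost.
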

The essential aspects of this lemma are the construction of a new sequence {which again lies in $BV(\Omega;\{0,a_k\})$}, and the quantitative inequality~\pref{ineq:perimeter-quantitative} relating  the perimeters.

\subsection{Proof of Theorem  \ref{3D-first-order -limit}}
\begin{proof} 
 {\bf (Lower bound)} 
Let  $v_\eta$ be a sequence such that the sequences of energies
$\Feb(v_\eta)$ and masses $\int_\Tb v_\eta$ are bounded. By Lemma~\ref{lemma:compactness}, a subsequence converges to a limit $v_0$ of the form~\pref{eq:structure_v0}. By Lemma~\ref{lemma:other_sequence} it is sufficient to consider a sequence (again called $v_\eta$) such that $v_\eta = \sum_{i=1}^n v_\eta^i$
with $\wliminf_{\eta\to0}v_\eta^i\geq m_0^i\delta_{x^i}$, $\supp v_\eta^i\subset B(x^i,1/4)$, and $\dist(\supp v_\eta^i,\supp v_\eta^j)>0$ for all $i\not=j$. Then, writing 
\begin{equation}\label{def-z-eta}
z_\eta^i(y) := \eta^3 v_\eta^i\bigl(x^i+\eta y), 
\end{equation} 
we have
\[
\int_\Tb v_\eta^i = \int_{\R^3} z_\eta^i
\qquad\text{and}\qquad
\int_\Tb |\nabla v_\eta^i| = \eta^{-1} \int_{\R^3}|\nabla z_\eta^i|, 
\]
and by~\pref{3D-decomp}
\[
\Hmospace{v_\eta^i}{\Tb}^2 
= \eta^{-1}\|z_\eta^i\|_{H^{-1}(\R^3)}^2 
  + \int_\Tb \int_\Tb v_\eta^i(x)v_\eta^i(y) g^{(3)}  (x-y)\, dx\, dy.
\]
For future use we introduce the shorthand
\[
m^i_\eta := \int_\Tb v_\eta^i  = \int_{\R^3} z_\eta^i.
\]
Then
\begin{align}
\label{3d-decomposition}
\Feb(v_\eta)&= \sum_{i=1}^n \Feb(v_\eta^i) 
  + \eta\sum_{\substack{i,j=1\\i\not=j}}^n \int_\Tb \int_\Tb v_\eta^i(x)v_\eta^j(y) G_\Tb (x-y)\, dx\, dy \nonumber \\
&=\sum_{i=1}^n \left[ \int_{\R^3} |\nabla z_\eta^i |  \, + \, \|z_\eta^i\|_{H^{-1}(\R^3)}^2\right] \nonumber
\\
&\qquad{}
   + \eta\sum_{i=1}^n \int_\Tb \int_\Tb v_\eta^i(x)v_\eta^i(y) g^{(3)}  (x-y)\, dx\, dy  
   + \eta\sum_{\substack{i,j=1\\i\not=j}}^n \int_\Tb \int_\Tb v_\eta^i(x)v_\eta^j(y) G_\Tb (x-y)\, dx\, dy \nonumber \\
&\geq \sum_{i=1}^n \fzb\left(m_\eta^i\right)
  + \eta\inf g^{(3)} \sum_{i=1}^n \left(m_\eta^i\right)^2
  + \eta\inf G_\Tb\sum_{\substack{i,j=1\\i\not=j}}^n m_\eta^im_\eta^j.
\end{align}
Since the last two terms vanish in the limit,  the continuity and monotonicity of $\fzb$ (a consequence of Lemma \ref{lemma:char_limit}) imply  that 
\[
\liminf_{\eta\to0}\Feb(v_\eta) \geq \sum_{i=1}^n \fzb\left(\liminf_{\eta\to0} m_\eta^i\right)\geq \sum_{i=1}^n \fzb(m^i)\geq 
\Fzb(v_0).
\]

\bigskip

{\bf (Upper bound)}
Let $v_0$ satisfy 
$\Fzb(v_0)<\infty$. It is  sufficient to prove the statement for finite sums
\[
v_0 = \sum_{i=1}^n m^i\, \delta_{x^i}, 
\]
since an infinite sum $v_0=\sum_{i=1}^\infty m^i\, \delta_{x^i}$ can trivially be approximated by finite sums, and in that case
\[
\Fzb\left(\sum_{i=1}^n m^i\, \delta_{x^i}\right)
  = \sum_{i=1}^n \fzb(m^i) 
  \leq \sum_{i=1}^\infty \fzb(m^i) = \Fzb(v_0).
\]

To construct the appropriate sequence $v_\eta \weak v_0$,  let $\epsilon >0$ and 
let $z^i$  be  near-optimal in the definition of $e_0^{\rm 3d} (m^i)$, i.e., 
\begin{equation}\label{optimalz}
 \int_{\R^3} |\nabla z^i| \,\, + \,\,  \|z^i\|_{H^{-1}(\R^3)}^2 \,\, \leq\,\, e_0^{\rm 3d} (m^i)  + \, \, \frac\epsilon n.
\end{equation}
By an argument based on the isoperimetric inequality we can assume that the support of $z^i$ is bounded. We then set
\begin{equation}\label{v-z}
v_\eta^i(x) := \eta^{-3}z^i(\eta^{-1}(x-x^i)),
\end{equation}
so that 
\[
\int_\Tb v_\eta^i =  m^i. 
\]
Since the diameters of the supports of the $v_\eta^i$ tend to zero, and since the $x^i$ are distinct, 
$v_\eta : = \sum_i v_\eta^i $ is admissible for $\Feb$ when $\eta$ is sufficiently small.

Following the  argument of \pref{3d-decomposition}, we have    
\begin{eqnarray*}
\Feb (v_\eta) & = & 
\sum_{i=1}^n \left[ \int_{\R^3} |\nabla z^i |  \, + \, \|z^i\|_{H^{-1}(\R^3)}^2\right] \nonumber
\\
&& {}
   + \eta\sum_{i=1}^n \int_\Tb \int_\Tb v_\eta^i(x)v_\eta^i(y) g^{(3)}  (x-y)\, dx\, dy  
   + \eta\sum_{\substack{i,j=1\\i\not=j}}^n \int_\Tb \int_\Tb v_\eta^i(x)v_\eta^j(y) G_\Tb (x-y)\, dx\, dy
\end{eqnarray*}
and thus  
\[ \limsup_{\eta \rightarrow 0} \Feb (v_\eta) \, \le  \, \Fzb(v_0) \, + \, \epsilon. \]
The result follows by letting $\epsilon $ tend to zero. 
\end{proof}

\subsection{Proof of Theorem  \ref{3D-th:sharpnextlevel} }
\begin{proof}  {\bf (Lower bound)} 
Let $v_\eta=\sum_{i=1}^nv_\eta^i$ be a sequence with bounded energy $\Heb(v_\eta)$ as given by Lemma~\ref{lemma:other_sequence}, converging to a $v_0$ of the form 
\[ 
v_0 = \sum_{i=1}^n m^i \delta_{x^i},
\]
where $m_0^i\geq0$ and the $x^i$ are distinct. Again we use the rescaling~\pref{def-z-eta} and we set
\[
m^i_\eta := \int_\Tb v_\eta^i  = \int_{\R^3} z_\eta^i.
\]

Following the second line of~\pref{3d-decomposition}
we have 
\begin{align}
\Heb(v_\eta) &= \eta^{-1}\left[ \Feb(v_\eta) - \fzb\left(\int_\Tb v_\eta\right)\right]\notag\\
&= \frac1\eta\sum_{i=1}^n\left[\int_{\R^3} |\nabla z^i_\eta| + \| z^i_\eta\|^2_{H^{-1}(\R^3)} - \fzb\left(m_\eta^i\right)\right]
 +\frac1\eta\left[\sum_{i=1}^n\fzb\left(m_\eta^i\right)- 
                       \fzb\left(\sum_{i=1}^n m_\eta^i\right)\right]      \notag\\
&\qquad {}
   + \sum_{i=1}^n \int_\Tb \int_\Tb v_\eta^i(x)\, v_\eta^i(y)\,  g^{(3)} (x-y)\, dx\, dy  
   + \sum_{\substack{i,j=1\\i\not=j}}^n \int_\Tb \int_\Tb v_\eta^i(x)\, v_\eta^j(y) \, G_\Tb (x-y)\, dx\, dy.
\label{alt:Heb}
\end{align}
Since the first two terms are both non-negative, the boundedness of $\Heb(v_\eta)$ and continuity of $\fzb$  imply that 
\[
0\leq \sum_{i=1}^n \fzb(m^i) - \fzb\left(\sum_{i=1}^n m^i\right)
= \lim_{\eta\to0} \left[\sum_{i=1}^n\fzb\left(m_\eta^i\right)- 
                       \fzb\left(\int_\Tb v_\eta\right)\right]
\leq 0,
\]
and therefore the sequence $\{m^i\}$ satisfies~\pref{cond:mass-minimizer}. 

By the condition~\pref{cond:second-moments} the sequence $z_\eta^i$ is tight, and since it is bounded in $BV(\R^3;\{0,1\})$, a subsequence converges in $L^1(\R^3)$ to a limit $z_0^i$ (see for instance Corollary IV.26 of \cite{Bre}). We then have
\begin{align*}
0&\leq \int_{\R^3} |\nabla z_0^i| + \|z_0^i\|^2_{H^{-1}(\R^3)} - \fzb\left(m^i\right)\\
&\leq \liminf_{\eta\to0} \biggl[\int_{\R^3} |\nabla z_\eta^i| + \|z_\eta^i\|^2_{H^{-1}(\R^3)} \biggr]
 - \lim_{\eta\to0} \fzb\left(m_\eta^i\right)
\stackrel{\pref{alt:Heb}}=0,
\end{align*}
which implies that $z_0^i$ is a minimizer for $\fzb(m^i)$.

Finally we conclude that 
\begin{align*}
\liminf_{\eta\to0} \Heb(v_\eta) 
&\geq 
\liminf_{\eta\to0} \Biggl( 
  \sum_{i=1}^n \int_\Tb \int_\Tb v_\eta^i(x)\, v_\eta^i(y) \, g^{(3)}  (x-y)\, dxdy  \\
 & \qquad \qquad \quad  + \,\,  \sum_{\substack{i,j=1\\i\not=j}}^n \int_\Tb \int_\Tb v_\eta^i(x)\, v_\eta^j(y) \, G_\Tb (x-y)\, dx\, dy \Biggr)\\
& = \,  g^{(3)} (0)\sum_{i=1}^n (m^i)^2
  + \sum_{\substack{i,j=1\\i\not=j}}^n m^i\, m^j \, G_\Tb (x^i-x^j)
  = \Hzb(v_0).
\end{align*}

\bigskip

{\bf (Upper bound)}  Let 
\[
v_0 = \sum_{i=1}^n m^i \delta_{x^i},
\]
with the $x^i$ distinct and $\{m^i\} \in{\cal M}$. By the definition of ${\cal M}$ we may choose $z^i$ that achieve the minimum in the minimization problem defining $\fzb(m^i)$; by an argument based on the isoperimetric inequality the support of $z^i$ is bounded. 

Setting $v_\eta^i$ by~\pref{v-z}, for $\eta$ sufficiently small the function
$v_\eta : = \sum_{i=1}^n v_\eta^i $ is admissible for $\Heb$, and $v_\eta \weak v_0$. 
Then following the second line of (\ref{3d-decomposition}), we have 
\[ \lim_{\eta \rightarrow 0} \Heb (v_\eta) \, = \, \Hzb(v_0). \]

\end{proof}

\subsection{Proofs of Lemmas~\ref{lemma:other_sequence} and~\ref{lemma:truncation}}
\label{sec:proofs2}

For the proof of Lemma~\ref{lemma:other_sequence} we first state and prove two lemmas. 
Throughout this section, if $B$ is a ball in $\R^3$ and $\lambda>0$, then $\lambda B$ is the ball in $\R^3$ obtained by multiplying $B$ by $\lambda$ with respect to the center of $B$; $B$ and $\lambda B$ therefore have the same center.

\begin{lemma}\label{lemma-0}
Let $w\in BV(B_R;\{0,1\})$. Choose $0<r<R$, and set $A := B_R\setminus \overline B_r$. Then for any $r\leq\rho\leq R$ we have
\[
\frac{\H^2(\partial B_\rho \cap \supp w) }{\H^2(\partial B_\rho)}
\leq 
\frac1{\H^2(\partial B_r)}\int_A |\nabla w| + \dashint_{A} w
\]
\end{lemma}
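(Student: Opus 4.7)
The plan is to reduce the problem to a one-dimensional statement in the radial variable. Define $\psi:[r,R]\to[0,1]$ by
\[
\psi(s) := \frac{\H^2(\partial B_s \cap \supp w)}{\H^2(\partial B_s)} = \frac{1}{4\pi}\int_{S^2} w(s\theta)\,d\theta,
\]
so that the left-hand side of the lemma is precisely $\psi(\rho)$. The two quantities on the right-hand side should then correspond, respectively, to a total variation bound on $\psi$ and to an average of $\psi$ over $[r,R]$.

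First I would establish the estimate
\[
V_{(r,R)}(\psi) \,\leq\, \frac{1}{\H^2(\partial B_r)}\int_A |\nabla w|.
\]
The natural route is through the duality characterization of $|\nabla w|(A)$. Testing against purely radial vector fields $g(x) = \alpha(|x|)\,\hat\nu(x)$ with $\alpha\in C_c^1((r,R))$ and $|\alpha|\le1$, and computing in spherical coordinates, one gets
\[
\int_{\R^3} w\,\mathrm{div}\, g\,dx \,=\, \int_r^R \frac{d}{ds}\bigl(s^2\alpha(s)\bigr)\,4\pi\,\psi(s)\,ds
 \,=\, -4\pi\int_r^R s^2\,\alpha(s)\,d\psi(s),
\]
where the right-hand side is interpreted via integration by parts in the BV sense. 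Since $|g|=|\alpha|\le1$, this integral is bounded in absolute value by $\int_A|\nabla w|$. Taking the supremum over admissible $\alpha$ yields $4\pi\int_r^R s^2\,d|\psi|(s)\le\int_A|\nabla w|$, and the bound $s^2\ge r^2$ produces the claimed variation estimate.

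Next I would use Fubini in spherical coordinates to recognize
\[
\dashint_A w \,=\, \frac{1}{|A|}\int_r^R 4\pi s^2\,\psi(s)\,ds,
\]
which exhibits $\dashint_A w$ as a convex combination of values of $\psi$. Therefore
\[
\psi(\rho) - \dashint_A w \,=\, \frac{1}{|A|}\int_r^R 4\pi s^2\,\bigl(\psi(\rho)-\psi(s)\bigr)\,ds,
\]
and since $|\psi(\rho)-\psi(s)|\le V_{[r,R]}(\psi)$ for every $s$, the two ingredients combine to give the conclusion. The main technicality I expect is justifying the integration by parts for general BV $w$ and controlling $\psi$ pointwise at its (at most countable) jump set; this is handled by taking the upper semicontinuous representative of $\psi$, which is consistent with the appearance of the closed set $\supp w$ on the left-hand side and with a standard approximation of $w$ by smooth functions in the duality argument.
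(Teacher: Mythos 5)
Your argument is correct, and it proves the lemma by a genuinely different route from the paper. You reduce everything to the one-dimensional function $\psi(s)$ of spherical averages: the perimeter term controls the essential variation of $\psi$ on $(r,R)$ via duality against radial vector fields (the factor $1/\H^2(\partial B_r)$ arising from the worst-case weight $s^2\ge r^2$), the term $\dashint_A w$ is exhibited as a weighted average of $\psi$, and the conclusion is the elementary fact that a value of a $BV$ function deviates from any average of it by at most its total variation. The paper instead argues geometrically in $\R^3$: it projects $A\cap\supp w$ radially onto $\partial B_r$, covers the image by the projection of $A\cap\partial\supp w$ (controlled by the perimeter, since the projection is a contraction) together with the set of radii lying entirely inside $\supp w$ (controlled by the volume fraction), and uses that radial projection preserves the angular fraction of $\partial B_\rho\cap\supp w$. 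Your route is more analytic and extends verbatim to other dimensions; the paper's is shorter and works directly with the closed set $\supp w$, which sidesteps the representative issue you correctly flag at the end: the a.e.-defined $\psi$ and the pointwise quantity $\H^2(\partial B_\rho\cap\supp w)/\H^2(\partial B_\rho)$ can genuinely differ for a bad representative of $w$ (and at the endpoint $\rho=r$ the statement is delicate for both proofs). This is not a gap specific to your argument: the paper itself only resolves it in the application, Lemma~\ref{lemma:alpha}, by first replacing $w$ with an approximation having smooth support, and under that reduction your identification of the left-hand side with the (upper semicontinuous version of) $\psi(\rho)$ is legitimate. One small point worth making explicit: writing $-4\pi\int s^2\alpha\,d\psi$ presupposes $\psi\in BV$, so you should instead read the duality bound directly as a bound on $\sup\bigl\{\int_r^R\beta'\psi\,ds:\ \beta\in C_c^1((r,R)),\ |\beta|\le r^2\bigr\}$, which simultaneously yields $\psi\in BV((r,R))$ and the stated variation estimate.
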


\begin{proof}
Let $P$ be the projection of $\R^3$ onto $\overline B_r$. For any closed set $D\subset \R^3$ with finite perimeter, the projected set $P(A\cap D)$ is included in $E_b\cup E_r$, where the two sets are:
\begin{itemize}
\item The projected boundary $E_b := P(A\cap \partial D)$; since $P$ is a contraction, $\H^2(E_b)\leq \H^2(A\cap \partial D)$;
\item The set of projections of full radii $E_r := \{x\in \partial B_r: \lambda x\in D \text{ for all }1\leq \lambda \leq R/r\}$, for which 
\[
\H^2(E_r) = \frac{\H^2(\partial B_r)}{\L^3(A)}{\L^3(\{\lambda x: x\in E_r, 1\leq \lambda\leq R/r\})}
\leq \frac{\H^2(\partial B_r)}{\L^3(A)}{\L^3(D\cap A)}.
\]
\end{itemize}
Applying these estimates to $D=\supp w$ we find
\begin{align*}
\frac{\H^2(\partial B_\rho \cap \supp w) }{\H^2(\partial B_\rho)}
&= 
\frac{\H^2\bigl(P(\partial B_\rho \cap \supp w)\bigr) }{\H^2(\partial B_r)}\\
&\leq 
\frac{\H^2\bigl(P( A\cap \supp w)\bigr) }{\H^2(\partial B_r)}\\
&\leq 
\frac1{\H^2(\partial B_r)}\left\{\H^2(A\cap \partial \supp w) + \frac{\H^2(\partial B_r)}{\L^3(A)}{\L^3(A\cap\supp w)}\right\},
\end{align*}
and this last expression implies the assertion.
\end{proof}

\begin{lemma}
\label{lemma:alpha}
There exists $0<\alpha<1$ with the following property. For any $w\in BV(B_R;\{0,1\})$ with
\begin{equation}
\label{assumption:RII}
\frac1{\H^2(\partial B_{\alpha R})} \int_{B_R\setminus \overline{B_{\alpha R}}} |\nabla w|
  \;+\; \dashint_{B_R\setminus \overline{B_{\alpha R}}} w 
  \leq \frac12,
\end{equation}
there exists $\alpha\leq \beta < 1$ such that 
\begin{equation}
\label{ineq:cutoff_r}
2\|\partial B_{\beta R}\|(\supp w) \leq \int_{B_R\setminus \overline{B_{\beta R}}}|\nabla w|.
\end{equation}
\end{lemma}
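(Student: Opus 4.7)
My plan is a proof by contradiction. Write $E:=\supp w$, $g(\rho):=\H^2(\partial B_\rho\cap E)$ and $P(\rho):=|\nabla w|(B_R\setminus\overline B_\rho)$, so that the desired inequality reads $2g(\beta R)\le P(\beta R)$. Suppose instead that $2g(\rho)>P(\rho)$ for every $\rho\in[\alpha R,R)$; I will derive a contradiction with the hypothesis, provided $\alpha$ is chosen (universally in $w$ and $R$) sufficiently small.

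Lemma~\ref{lemma-0} applied with $r=\alpha R$, together with the hypothesis, gives $g(\rho)\le\tfrac12\H^2(\partial B_\rho)=2\pi\rho^2$ for every $\rho\in[\alpha R,R]$, so each slice $E\cap\partial B_\rho$ covers at most half of the sphere. The spherical isoperimetric inequality (obtained directly from the spherical-cap identity $\ell_{\text{cap}}^2 = 2\pi g(1+\cos\theta)\ge 2\pi g$) then yields $\ell(\rho):=\H^1(\partial^*E\cap\partial B_\rho)\ge\sqrt{2\pi g(\rho)}$ for a.e.\ $\rho$. Combining with the coarea inequality for the Lipschitz radial function $x\mapsto|x|$ on the rectifiable set $\partial^*E$ (whose tangential gradient is bounded by $1$), and with the standard BV slicing theorem identifying $\partial^*E\cap\partial B_\rho$ with the intrinsic boundary of $E\cap\partial B_\rho$ on the sphere, I would obtain
\[
P(\rho)\;\ge\;\int_\rho^R\ell(s)\,ds\;\ge\;\sqrt{2\pi}\int_\rho^R\sqrt{g(s)}\,ds.
\]

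Introducing $H(\rho):=\int_\rho^R\sqrt{g(s)}\,ds$, so that $H'=-\sqrt g$, $g=(H')^2$, and $H(R)=0$, the contradiction hypothesis turns into $(H'(\rho))^2>\sqrt{\pi/2}\,H(\rho)$ on the set $\{H>0\}$. Taking square roots and using $H'\le 0$ gives $-(2\sqrt H)'(\rho)>(\pi/2)^{1/4}$; integrating from $\alpha R$ to $R$ and feeding the bound back into the previous display yields
\[
|\nabla w|(A)\;=\;P(\alpha R)\;\ge\;\sqrt{2\pi}\,H(\alpha R)\;>\;\frac{\pi R^2(1-\alpha)^2}{4}.
\]
The hypothesis also bounds $|\nabla w|(A)\le\tfrac12\H^2(\partial B_{\alpha R})=2\pi\alpha^2R^2$, so the two estimates together produce a contradiction whenever $(1-\alpha)^2>8\alpha^2$, i.e.\ for every universal choice $\alpha<1/(1+2\sqrt2)$ (e.g.\ $\alpha=1/4$).

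Two small remarks finish the picture. The degenerate case $H(\alpha R)=0$ is immediate: it forces $g\equiv 0$ a.e.\ on $[\alpha R,R]$, so almost any $\beta\in[\alpha,1)$ already satisfies $2g(\beta R)=0\le P(\beta R)$. The genuinely delicate step, which I expect to be the main obstacle, is the coarea/slicing identity underlying $P(\rho)\ge\int_\rho^R\ell(s)\,ds$: this requires matching the $\H^1$-boundary of the spherical slice with the slice of the reduced boundary $\partial^*E$ for a.e.\ $\rho$, which I would justify via the standard slicing theorem for sets of finite perimeter.
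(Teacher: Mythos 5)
Your proof is correct and follows essentially the same route as the paper's: assume $2\|\partial B_\rho\|(\supp w)>|\nabla w|(B_R\setminus\overline B_\rho)$ for all $\rho$, use Lemma~\ref{lemma-0} with \pref{assumption:RII} to ensure the slices occupy at most half of each sphere, apply the relative isoperimetric inequality on spheres together with the coarea/slicing bound, and solve the resulting differential inequality to contradict the perimeter bound from \pref{assumption:RII}. The only difference is cosmetic: you use the sharp spherical isoperimetric constant to obtain an explicit $\alpha=1/4$, whereas the paper keeps a generic constant $C$ and defines $\alpha$ implicitly via \pref{def:alpha}, and the paper dispatches the slicing subtlety you flag by first approximating $w$ by a set with smooth support.
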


\begin{proof}
By approximating (see for example Theorem 3.42 of \cite{AFP}) and scaling we can assume that $w$ has smooth support and that $R=1$. Set $0<\alpha<1$ to be such that 
\begin{equation}
\label{def:alpha}
\frac{(1-\alpha)^2}{16C} = \H^2(\partial B_\alpha),
\end{equation}
where $C$ is the constant in the relative isoperimetric inequality on the sphere $S^2$:
\[
\min\{\H^2(D\cap S^2),\H^2(S^2\setminus D)\}
\leq C(\H^1(\partial D\cap S^2))^2.
\]
We note that  the combination of the assumption~\pref{assumption:RII} and Lemma~\ref{lemma-0} implies that when applying this inequality to $D= \supp w$, with $S^2$ replaced by $\partial B_{1-s}$, the minimum is attained by the first argument, i.e. we have 
\[
\H^2(D\cap\partial B_{1-s})
 \leq C (\H^1(\partial D\cap\partial B_{1-s}))^2.
\]

We now assume that the assertion of the Lemma is false, i.e. that for all $\alpha<r<1$
\begin{equation}
\label{assumption:perimeter}
0< 2\|\partial B_r\|(D) - \|\partial D\|(B_1\setminus \overline B_r).
\end{equation}
Setting $f(s) := \H^1(\partial D\cap\partial B_{1-s})$ we have 
\begin{equation}
\label{ineq:perimeter2}
\int_0^s f(\sigma)\, d\sigma = \int_{1-s}^1 \H^1(\partial D\cap\partial B_r)\, dr
\leq \int _{B_1\setminus \overline B_{1-s}} |\nabla w|
\stackrel{\pref{assumption:perimeter}}
< 2\|\partial B_{1-s}\|(D).
\end{equation}
By the relative isoperimetric inequality we find
\[
\int_0^s f(\sigma)\, d\sigma < 2\|\partial B_{1-s}\|(D)
 \leq 2C (\H^1(\partial D\cap\partial B_{1-s}))^2
= 2Cf(s)^2.
\]
Note that this inequality implies that $f$ is strictly positive for all $s$. Solving this inequality for positive functions~$f$ we find
\[
\int_0^{1-\alpha} f(\sigma)\, d\sigma > \frac{(1-\alpha)^2}{8C}
\stackrel{\pref{def:alpha}} = 2\H^2(\partial B_\alpha) \geq 2\|\partial B_\alpha\|(D)
\stackrel{\pref{ineq:perimeter2}}> \int_0^{1-\alpha} f(\sigma)\, d\sigma,
\]
a contradiction. Therefore there exists an $r=:\beta  R$ satisfying~\pref{ineq:cutoff_r}, and the result follows as remarked above.
\end{proof}

{\bf Proof of Lemma~\ref{lemma:truncation}}:
Let $\alpha$ be as in Lemma \ref{lemma:alpha}. 
Choose $n$ balls $B^i$, of radius less than $1/8$, centered at $\{x^i\}_{i=1}^n$, and such that the family $\{2 B^i\}$ is disjoint. Set $w_k := a_k^{-1} u_k$, and note that for each $i$, 
\begin{align*}
\frac{1}{\H^2(\partial \alpha B^i)} \int_{B^i\setminus \overline{\alpha B^i}} |\nabla w_k|
  &\;+\; \dashint_{B^i\setminus \overline{\alpha B^i}} w_k
\;\leq\; \frac C{a_k} \left\{ \int_\Omega |\nabla u_k| + \int_\Omega u_k\right\},
\end{align*}
and this number tends to zero by~\pref{ass:small_perimeter}, implying that the function $w_k$ on $B^i$ is admissible for Lemma~\ref{lemma:alpha}. For each $i$ and each $k$, let $\beta_k^i$ be given by Lemma~\ref{lemma:alpha}, so that
\begin{equation}\label{lem-6.3-1}
2\|\partial \beta_k^i B^i\|(\supp u_k) \leq a_k^{-1}\|\nabla u_k\|(B^i\setminus \overline{\beta_k^i B^i}).
\end{equation}
Now set $\tilde u_k^i := u_k \chi_{\beta_k^iB^i}$ and $\tilde u_k := \sum_{i=1}^n \tilde u_k^i$. Then for any open $A\subset \Omega$ such that $x^i\in A$, 
\[
\liminf_{k\to\infty} \int _A \tilde u_k^i
\;=\; \liminf_{k\to\infty} \!\!\!\int\limits_{A\cap \beta_k^iB^i}\!\!\! u_k
\;\geq\;  \liminf_{k\to\infty}\!\!\! \int\limits_{A\cap \alpha B^i} \!\!\!u_k
\;\geq\; \sum_{j=1}^\infty m^j \delta_{x^j}(A\cap \alpha B^i)
\;\geq\; m^i,
\]
which proves~\pref{prop:tilde-u-k}; property~\ref{enum:lemma-truncation-2} follows from this by remarking that
\[
\limsup_{k\to\infty} \int_\Omega (u_k-\tilde u_k) = \lim_{k\to\infty} \int_\Omega u_k - \liminf_{k\to\infty} \int_\Omega \tilde u_k \leq \sum_{j=1}^\infty m^j - \sum_{j=1}^n m^j.
\]
The uniform separation of the supports is guaranteed by the condition that the family $\{2B^i\}$ is disjoint, and property~\ref{enum:lemma-truncation-1} follows by construction; it only remains to prove~\pref{ineq:perimeter-quantitative}.

For this we calculate
\begin{align}
\notag
\int_\Omega |\nabla \tilde u_k| 
&= \|\nabla u_k\|\Bigl(\bigcup_{i=1}^n \beta_k^i B^i\Bigr)
+ a_k \sum_{i=1}^n \|\partial \beta_k^i B^i\|(\supp u_k) \\
&\stackrel{\pref{lem-6.3-1}}\leq \int_\Omega |\nabla u_k| - \|\nabla u_k\|\Bigl(\Omega\setminus \bigcup_{i=1}^n \beta_k^i B^i\Bigr) + \frac12\sum_{i=1}^n \|\nabla u_k\|(B^i\setminus \overline{\beta_k^i B^i})\notag\\
&\leq \int_\Omega |\nabla u_k| - \frac12 \|\nabla u_k\|\Bigl(\Omega\setminus \bigcup_{i=1}^n \beta_k^i B^i\Bigr)\notag \\
&\leq \int_\Omega |\nabla u_k| - C_k\|u_k- \tdashint_{A_k} u_k \|_{L^{3/2}(A_k)}
\label{ineq:tilde-u}
\end{align}
Here the constant $C_k$ is the constant in the Sobolev inequality on the domain $A_k := \Omega\setminus \cup_i \overline{\beta_k^i B^i}$,
\[
C_k {\|u-\tdashint_{A_k} u \|_{L^{3/2}(A_k)} }\leq \frac12 \int_{A_k}|\nabla u|.
\]
The number $C_k>0$ depends on $k$ through the geometry of the domain $A_k$. Note that the size of the holes $\beta_k^i B^i$ is bounded from above by $B^i$ and from below by $\alpha B^i$. Consequently, for each $k_1$ and $k_2$ there exists a smooth diffeomorphism mapping $A_{k_1}$ into $A_{k_2}$, and the first and second derivatives of this mapping are bounded uniformly in $k_1$ and $k_2$. Therefore we can replace in~\pref{ineq:tilde-u} the $k$-dependent constant $C_k$ by a $k$-independent (but $n$-dependent) constant $C>0$.

Note that since $u_k$ is bounded in $L^1$,
\begin{equation}
\label{eq:au}
a_k^{-3/2}\|u_k\|_{L^{3/2}(A_k)}^{3/2} 
= a_k ^{-1} \|u_k\|_{L^1(A_k)} \to 0 \qquad\text{as }k\to\infty.
\end{equation}
Continuing from~\pref{ineq:tilde-u} we then estimate by the inverse triangle inequality
\begin{align*}
\notag
\int_\Omega |\nabla \tilde u_k| 
&\leq \int_\Omega |\nabla u_k| - C\|u_k \|_{L^{3/2}(A_k)} + \frac{C}{|A_k|^{1/3}}\|u_k\|_{L^1(A_k)}\\
&= \int_\Omega |\nabla u_k| - C\|u_k \|_{L^{3/2}(A_k)}  + \frac{C}{|A_k|^{1/3}a_k^{1/2}}\|u_k\|_{L^{3/2}(A_k)}^{3/2}\\
&= \int_\Omega |\nabla u_k| - C\|u_k \|_{L^{3/2}(A_k)}\left\{1-  \frac{1}{|A_k|^{1/3}a_k^{1/2}}\|u_k\|_{L^{3/2}(A_k)}^{1/2}\right\}\\
&\stackrel{\pref{eq:au}}\leq \int_\Omega |\nabla u_k| - C'\|u_k \|_{L^{3/2}(A_k)}
= \int_\Omega |\nabla u_k| - C'\|u_k -\tilde u_k\|_{L^{3/2}(\Omega)}.
\end{align*}
This proves the inequality~\pref{ineq:perimeter-quantitative}.
\qed

\bigskip

{\bf Proof of Lemma~\ref{lemma:other_sequence}}: 
By Lemma~\ref{lemma:compactness} and by passing to a subsequence we can assume that $v_\eta$ converges as measures to $v_0$. We first concentrate on the lower bound for $\Feb$. 

Fix $n\in\N$ for the moment. We apply Lemma~\ref{lemma:truncation} to the sequence $v_\eta$ and find a collection of components $v_\eta^i$, $i=1\dots n$, and $\tilde v_\eta = \sum_i  v_\eta^i$, such that 
\[
\wliminf_{\eta\to0} v^i_\eta \geq\sum_{i=1}^n m^i \delta_{x^i},
\]
and
\[
\int_{\T^3} |\nabla \tilde v_\eta|
\leq \int_{\T^3} |\nabla v_\eta| - C\|v_\eta-\tilde v_\eta\|_{L^{3/2}(\T^3)}.
\]
Setting $r_\eta := v_\eta-\tilde v_\eta$ we also have
\begin{align*}  \Hmospace {\tilde v_\eta}\Tb ^2 
 &= \int_\Tb\int_\Tb v_\eta(x) v_\eta(y)G_\Tb (x-y)\, dxdy
 - 2\int_\Tb \int_\Tb r_\eta(x)\tilde v_\eta(y) G_\Tb (x-y)\, dxdy\\
 &\qquad{}
 - \int\int r_\eta(x)r_\eta(y)G_\Tb (x-y)\, dxdy\\
&\leq \Hmospace {v_\eta} \Tb ^2  - 2\inf G_\Tb \|r_\eta\|_{L^1(\Tb)}\|\tilde v_\eta\|_{L^1(\Tb)}.
\end{align*}
Therefore 
\begin{equation}
\label{bound:other-seq:Feb}
\Feb(\tilde v_\eta) \leq \Feb (v_\eta) - C\eta \|r_\eta\|_{L^{3/2}(\T^3)}
  + C'\|r_\eta\|_{L^1(\Tb)}.
\end{equation}
Assuming the lower bound has been proved for $\tilde v_\eta$, we then find
\begin{align*}
\liminf_{\eta\to0} \Feb (v_\eta) 
&\geq \liminf_{\eta\to0} \Bigl[\Feb(\tilde v_\eta) - C'\|r_\eta\|_{L^1(\Tb)}\Bigr]\\
&\geq \Fzb\Bigl(\wliminf_{\eta\to0} \tilde v_\eta\Bigr)
 - C'\lim_{\eta\to0} \int_\Tb v_\eta + C' \liminf_{\eta\to0} \int_\Tb \tilde v_\eta\\
&\geq \Fzb\Bigl(\sum_{i=1}^n m^i\delta_{x^i}\Bigr) -C'\sum_{i=n+1}^\infty m^i.
\end{align*}
Taking the supremum over $n$ the lower  bound inequality for $v_\eta$ follows. 
 
\medskip
Turning to a lower bound for $\Heb$, we remark that  by Lemma~\ref{lemma:char_limit} the number of $x^i$ in~\pref{eq:structure_v0} with non-zero weight $m^i$ is finite. Choosing $n$ equal to this number, we have 
\[
\lim_{\eta\to0}\int_\Tb v_\eta \geq \liminf_{\eta\to0} \int_\Tb \tilde v_\eta 
\geq \sum_{i=1}^n\liminf_{\eta\to0} \int_\Tb v_\eta^i \geq \sum_{i=1}^n m^i = \lim_{\eta\to0} \int_\Tb v_\eta,
\]
and therefore $v_\eta^i\weakto m_0^i\delta_{x^i}$, and 
$\int_\Tb r_\eta \to0$. Then
\begin{align}
\notag
\Heb(\tilde v_\eta ) &= \frac1\eta\left[\Feb(\tilde v_\eta) - \fzb\left(\int_\Tb \tilde v_\eta\right)\right]\\
&\stackrel{\pref{bound:other-seq:Feb}}\leq \frac1\eta\left[\Feb(v_\eta) - \fzb\left(\int_\Tb  v_\eta\right)\right]
-C\|r_\eta \|_{L^{3/2}(\Tb)} + \frac{C'}\eta\|r_\eta\|_{L^1(\Tb)}
\notag \\
&  \qquad \qquad \qquad 
+ \,\, \frac1\eta\left[\fzb\left(\int_\Tb  v_\eta\right)-\fzb\left(\int_\Tb  \tilde v_\eta\right)\right]
\label{est:other-seq:Heb}\\
&\leq \Heb(v_\eta) -\frac {C}{\eta} \Bigl(\int_\Tb  r_\eta\Bigr)^{2/3}
+ \frac {L+C'}\eta \int_\Tb  r_\eta.
\notag
\end{align}
Here $L$ is an upper bound for $\fzb'$ on the set $[\inf_\eta \int \tilde v_\eta,\infty)$ (see Lemma~\ref{lemma:char_limit}) and in the passage to the last inequality we used the triangle inequality for $\| \cdot \|_{L^{3/2}}$ and the fact that by construction, $r_\eta$ takes on only two values.  
For sufficiently small $\eta$, the last two terms add up to a negative value, and therefore we again have $\Heb(\tilde v_\eta)\leq \Heb(v_\eta)$. Because of the choice of $n$ we have $\tilde v_\eta\weakto v_0$; if we assume, in the same way as above, that the lower bound has been proved for $\tilde v_\eta$,  we then find that
\[
\liminf_{\eta\to0} \Heb(v_\eta)\geq \liminf_{\eta\to0} \Heb(\tilde v_\eta)
\geq \Hzb(v_0).
\]
For use below we note that 
\begin{align}
\notag
\Heb(v_\eta)&\geq \Heb(\tilde v_\eta)  = \frac1\eta\left[\Feb(\tilde v_\eta) - \fzb\left(\int_\Tb \tilde v_\eta\right)\right] \\
\notag 
& \stackrel{\pref{e-0-subadd}}\geq\sum_{i=1}^n \left[\int_\Tb|\nabla  v^i_\eta| + \| v_\eta^i\|^2_{H^{-1}(\Tb)} - \frac1\eta\fzb\left(\int_\Tb  v^i_\eta\right)\right] 
+ 2\sum_{\substack{i,j=1\\i\not=j}}^n\int_\Tb\int_\Tb v_\eta^i(x)v_\eta^j(y) G_\Tb(x-y)\, dxdy\\
&\geq \sum_{i=1}^n \left[\int_{\R^3}|\nabla  v^i_\eta| + \| v_\eta^i\|^2_{H^{-1}(\R^3)} - \frac1\eta\fzb\left(\int_{\R^3}  v^i_\eta\right)\right] 
+ \inf_\Tb g^{(3)} \sum_{i=1}^n \left(\int_{R^3} v_\eta^i\right)^2\notag\\
&\qquad {}+ 2\inf G_\Tb \sum_{\substack{i,j=1\\i\not=j}}^n \int_{\R^3} v_\eta^i \int_{\R^3} v_\eta^j
\label{est:bdd-terms}
\end{align}
In the calculation above, and in the remainder of the proof, we switch to considering $v_\eta^i$ defined on $\R^3$ instead of $\Tb$. Since the terms in the first sum above are non-negative, boundedness of $\Heb(v_\eta)$ as $\eta\to0$ implies the boundedness of each of the terms in the sum independently.

We now show that when $\Heb(v_\eta)$ is bounded, then for each $i$ 
\begin{equation}
\label{property:xi}
\exists \xi^i_\eta\in\R^3: \qquad
\int_{\R^3} |x-\xi^i_\eta|^2 \,   v_\eta^i(x)\, dx = O(\eta^2)\qquad\text{as }\eta\to0.
\end{equation}
Suppose that this is not the case for some $i$; fix this $i$. We choose for $\xi_\eta$ the barycenter of $v_\eta^i$, i.e.
\begin{equation}
\label{def:xi_eta}
\xi_\eta = \frac{\displaystyle \int_{\R^3} x v_\eta^i(x)\, dx}{\displaystyle \int_{\R^3} v_\eta^i}.
\end{equation}
Since we assume the negation of~\pref{property:xi}, we find that
\begin{equation}
\label{ass:rho}
\rho_\eta^2 := \int_{\R^3} |x-\xi_\eta|^2 \, v_\eta^i(x)\, dx \gg \eta^2.
\end{equation}
Note that by~\pref{def:xi_eta} and the fact that $v_\eta^i\weakto x^i$,
\begin{equation}
\label{conv:rho_eta}
\lim_{\eta\to0}\rho_\eta = 0.
\end{equation}

Now rescale $v_\eta^i$ by defining $\zeta_\eta(x) : =\rho_\eta^3 v_\eta^i(\xi_\eta+\rho_\eta x)$. The sequence $\zeta_\eta$ satisfies
\begin{enumerate}
\item $\zeta_\eta\in BV(\R^3,\{0,\rho_\eta^3\eta^{-3}\})$;
\item $\displaystyle \int_{\R^3} \zeta_\eta = \int_{\R^3} v_\eta^i$;
\item $\displaystyle\frac\eta{\rho_\eta}\biggl(\int_{\R^3} |\nabla \zeta_\eta| +\|\zeta_\eta\|^2_{H^{-1}(\R^3)}\biggr) = \eta \int_{\R^3} |\nabla  v_\eta^i| + \eta \| v_\eta^i\|^2_{H^{-1}(\R^3)}$, and 
\item $\displaystyle\int_{\R^3} |x|^2 \, \zeta_\eta(x)\, dx  = 1$.
\end{enumerate}
The  first three properties imply that the sequence $\zeta_\eta$ is of the same type as the sequence $v_\eta$ in the rest of this paper, provided one replaces the small parameter $\eta$ by the small parameter $\tilde \eta := \eta/\rho_\eta$. The fourth property implies that the sequence is tight. By the third property above, \pref{est:bdd-terms}, and~\pref{conv:rho_eta}, the boundedness of $\Heb$ translates into the vanishing of the analogous expression for $\zeta_\eta$:
\begin{equation}
\label{limit:truncation-zeta}
\limsup_{\eta\to0} \;\Biggl\{\int_{\R^3} |\nabla \zeta_\eta| +\|\zeta_\eta\|^2_{H^{-1}(\R^3)} - \frac{\rho_\eta}\eta \fzb\Bigl(\int_{\R^3}\zeta_\eta\Bigr)\Biggr\} = 0.
\end{equation}
We now construct a contradiction with this limiting behavior, and therefore prove~\pref{property:xi}.

\medskip
Following the same arguments as for $v_\eta$ we apply the concentration-compactness lemma of Lions~\cite{L1} to find that the sequence $\zeta_\eta$ converges to (yet another) weighted sum of delta functions
\[
\mu := \sum_{j=1}^\infty \frak m^j \delta_{y^j}
\qquad\text{with}\qquad
\mu(\R^3) = m^i,
\]
where $\frak m^j\geq0$ and $y^j\in \R^3$ are distinct. Since 
\[
\int x \, d\mu(x) = \lim_{\eta\to0} \int_{\R^3} x \, \zeta_\eta(x)\, dx = 0
\qquad\text{and}\qquad
\int |x|^2 \, d\mu(x) = \lim_{\eta\to0} \int_{\R^3} |x|^2\, \zeta_\eta(x)\, dx = 1,
\]
at least two different $\frak m^j$ are non-zero; we assume those to be $j=1$ and $j=2$.  

We will need to show that the number of non-zero $\frak m^j$ is finite. Assuming the opposite for the moment, choose $n\in\N$ so large that 
\[
\sum_{j=1}^n \fzb(\frak m^j) > \fzb(m^i);
\]
this is possible since there exist no minimizers for $\fzb(m^i)$ with infinitely many non-zero components (Lemma~\ref{lemma:char_limit}).  We apply Lemma~\ref{lemma:truncation} to find a new sequence 
$\tilde \zeta_\eta= \sum_{j=1}^n \zeta_\eta^j$, where $\zeta_\eta^j\weakto \frak m^j\delta_{y^j}$.
Then 
\begin{align*}
\liminf_{\eta\to0} \frac\eta{\rho_\eta}\biggl\{\int_{\R^3}|\nabla \zeta_\eta| + \|\zeta_\eta\|^2_{H^{-1}(\R^3)}\biggr\}
&\geq\liminf_{\eta\to0}  \sum_{j=1}^n \frac\eta{\rho_\eta}\biggl\{\int_{\R^3}|\nabla \zeta^j_\eta| + \|\zeta^j_\eta\|^2_{H^{-1}(\R^3)}\biggr\}
\\
&\geq
 \liminf_{\eta\to0}  \sum_{j=1}^n \fzb\Bigl(\int_{\R^3} \zeta_\eta^j\Bigr) 
> \fzb(m^i),
\end{align*}
which contradicts~\pref{limit:truncation-zeta}; therefore the number of non-zero components $\frak m^j$ is finite, and we can choose $n$ such that $m^i = \sum_{j=1}^n \frak m^j$ and $\int(\zeta_\eta-\tilde\zeta_\eta)\to0$.

To conclude the proof we now note that
\begin{align*}
\int_{\R^3} |\nabla \zeta_\eta| &+\|\zeta_\eta\|^2_{H^{-1}(\R^3)}-  \, \frac{\rho_\eta}\eta\fzb\Bigl(\int_{\R^3} \zeta_\eta\Bigr)\\
&\geq \sum_{j=1}^n \Bigl\{\int_{\R^3} |\nabla \zeta_\eta^j| +\|\zeta_\eta^j\|^2_{H^{-1}(\R^3)}\Bigr\}
 + 2(\zeta_\eta^1,\zeta_\eta^2)_{H^{-1}(\R^3)} + C\|\zeta_\eta-\tilde\zeta_\eta\|_{L^{3/2} (\R^3)}\\
 & \qquad - \,\frac{\rho_\eta}\eta\fzb\Bigl(\int_{\R^3} \zeta_\eta\Bigr)\\
 &\geq \frac{\rho_\eta}\eta\Biggl[\sum_{j=1}^n \fzb\Bigl(\int_{\R^3} \zeta_\eta^j\Bigr) 
 - \fzb\Bigl(\int_{\R^3} \tilde \zeta_\eta\Bigr)\Biggr]
+ \frac{\rho_\eta}\eta\Biggl[\fzb\Bigl(\int _{\R^3}\tilde \zeta_\eta\Bigr) - \fzb\Bigl(\int_{\R^3} \zeta_\eta\Bigr)\Biggr]\\
&\qquad{} + 2(\zeta_\eta^1,\zeta_\eta^2)_{H^{-1}(\R^3)} + C\|\zeta_\eta-\tilde\zeta_\eta\|_{L^{3/2}(\R^3)}\\
&\geq -\frac{L\rho_\eta}\eta \int_{\R^3} (\zeta_\eta-\tilde\zeta_\eta)+ \frac{C\rho_\eta}\eta \Bigl(\int_{\R^3} (\zeta_\eta-\tilde\zeta_\eta)\Bigr)^{2/3}
+ \frac1{2\pi} \int_{\R^3}  \int_{\R^3}  \frac{\zeta_\eta^1(x)\zeta_\eta^2(y)}{|x-y|}\, dxdy.
\end{align*}
Since $\lim_{\eta\to0}\int_{\R^3} (\zeta_\eta-\tilde\zeta_\eta) = 0$, the first two terms in the last line above eventually become positive; the final term converges to $(2\pi)^{-1}\frak m^1\frak m^2|y^1-y^2|^{-1}>0$. This contradicts~\pref{limit:truncation-zeta}.
\qed

\subsection{Proof of Lemma~\ref{lemma:char_limit}}
Let $z_n$ be a minimizing sequence for $\fzb(m)$.
The functions
\[
z_n^\e(x) := z_n\left(\frac{x}{(1+\e/m)^{1/3}}\right)
\]
are admissible for $\fzb(m+\e)$ for all $\e>-m$. Since the functions
\[
f_n(\e) := \int_{\R^3} |\nabla z_n^\e| + \|z_n^\e\|_{H^{-1}(\R^3)}^2
= (1+\e/m)^{2/3}\int_{\R^3} |\nabla z_n| + (1+\e/m)^{5/3}\|z_n\|_{H^{-1}(\R^3)}^2
\]
satisfy
\begin{multline}
\label{exp:f_n}
f_n(\e) = f_n(0) + \frac\e m\left(\frac{2}3 \int_{\R^3}|\nabla z_n| + \frac53 \|z_n\|_{H^{-1}(\R^3)}^2\right)\\
+ \frac {\e^2}{2m^2} \left(-\frac{2}9 \int_{\R^3}|\nabla z_n| + \frac{10}9 \|z_n\|_{H^{-1}(\R^3)}^2\right) + O\Bigl(\frac{\e}m\Bigr)^3,
\end{multline}
uniformly in $n$, we have for all $\e\geq0$,
\begin{equation}
\label{ineq:upper_bound_fz}
\fzb(m+\e) \leq \inf_n f_n(\e) \leq  \fzb(m) + \frac53 \fzb(m)\frac{\e} m + \frac{5}9 \fzb(m)\Bigl(\frac\e m\Bigr)^2 + O\Bigl(\frac{\e}m\Bigr)^3,
\end{equation}
We deduce that
\begin{equation}\label{fzb-bound}
\fzb(m+\e)-\fzb(m) \leq \frac 5{3m}\fzb(m) \, \e + O(\e^2). 
\end{equation} 
By (\ref{e-0-subadd}), we find that  for any $m \geq 1$ and any positive integer $n$, we have 
\[ \fzb (m)  \leq \fzb (1) \, + \, n \fzb \left( \frac{m-1}{n} \right). \]
By taking $n$ such that  $ \frac{m-1}{n} \in [1,2]$, we have 
\[ \fzb (m)  \leq \fzb (1) \, + \, C n \]
where $C$ denotes a uniform bound for $\fzb$ on the interval $[1,2]$. By choice of $n$ we have for some constant $C'$, $  
\fzb (m)  \leq \fzb (1) \, + \, C' m$.   
Combining this with (\ref{fzb-bound}), we find that   $\fzb'$ is bounded from above on sets of the form $[a,\infty)$ with $a>0$.

For the concaveness of $\fzb$, note that under a constant-mass constraint $\int |\nabla z|$ is minimal for balls and $\|z\|_{H^{-1}(\R^3)}$ is maximal for balls (see e.g.~\cite{Burchard96} for the latter). Setting $m = \int z$ and $r^3=3m/4\pi$, we therefore have
\[
-\frac{2}9 \int_{\R^3}|\nabla z_n| + \frac{10}9 \|z_n\|_{H^{-1}(\R^3)}^2
\leq -\frac{2}9 \int_{\R^3}|\nabla \chi^{}_{B_r}| + \frac{10}9 \|\chi^{}_{B_r}\|_{H^{-1}(\R^3)}^2,
\]
and an explicit calculation shows that the right-hand side is negative iff $m< 2\pi $. From~\pref{exp:f_n} we therefore have for all $m<2\pi$ and all $\e>-m$,
\[
\fzb(m+\e)\leq \fzb(m) + \inf_n \Bigl[a_n \e - b\e^2 + c\e^3\Bigr],
\]
where $a_n$ is a sequence of real numbers, and $b,c>0$. Writing this as
\[
\fzb(m) \leq \inf_{m_0\in(0,2\pi)}\left\{ 
  \fzb(m_0) + \inf_n\Bigl[a_n (m-m_0) - b(m-m_0)^2 + c(m-m_0)^3\Bigr]\right\},
\]
we note that for each $m_0$ the expression in braces is strictly concave in $m$ for $|m-m_0|< b/3c$; since the infimum of a set of concave functions is concave, it follows that the right-hand side is a concave function of $m$. Since equality holds for $m_0=m$, $\fzb$ is therefore concave for $m\leq 2\pi $, and $\fzb''(m)<0$ for $m<2\pi $.

Finally, part 3 follows from remarking that if (say) $m^1, m^2\in(0,2\pi )$, then
\[
\frac{d^2}{d\e^2} \Bigl(\fzb(m^1+\e) + \fzb(m^2-\e)\Bigr)\Bigr|_{\e=0}
= \fzb''(m^1) + \fzb''(m^2)<0.
\]
Therefore the sequence $(m^1,m^2,\ldots)$ is not optimal, a contradiction. It follows that there can be at most one $m^i$ in the region $(0,2\pi )$, and since the  total sum is finite, the number of non-zero $m^i$ is finite.
\qed


\section{Two dimensions}
\label{sec:2d}
All differences between the two- and three-dimensional case arise from a single fact: the scaling of the $H^{-1}$ is critical in two dimensions, making the two-dimensional case special. 

\subsection{Leading-order convergence}
The first difference is encountered in the leading-order limiting behavior. As we discussed in Section~\ref{sec:degeneration}, the leading-order contribution to the $H^{-1}$-norm involves the masses of the particles instead of their localized $H^{-1}$-norm (see~\pref{eq:local_L1Hmo}). For the local problem in two dimensions we therefore introduce
the function 
\begin{align}
\fza(m) &:= \frac{m^2}{2\pi} + \inf\left\{\int_{\R^2} |\nabla z|: 
   z\in BV(\R^2;\{0,1\}), \int_{\R^2} z = m \right\}\label{def:fe}\\
& = \frac{m^2}{2\pi} + 2 \sqrt{\pi m}.
\notag
\end{align}
Note that the minimization problem in~\pref{def:fe} is simply to minimize 
perimeter for a given area, and a disc of the appropriate area is the only solution.  
Thus  the value of $\fza(m)$ can be determined explicitly.

The function $\fza$ does not satisfy the lower-semicontinuity condition~\pref{e-0-subadd} ({\it cf.} Remark \ref{remark-lsc}). We therefore introduce the lower-semicontinuous envelope function 
\begin{equation}
\label{def:lscfz}
\lscfza(m) := \inf \left\{\sum_{j= 1}^\infty \fza(m^j): m^j\geq 0,\,\,  \sum_{j=1}^\infty m^j = m \right\}. 
\end{equation}
The limit functional is defined in terms of this envelope function:
\[ 
\Fza(v) := \begin{cases}
\sum_{i=1}^\infty \lscfza(m^i) & \text{if } v = \sum_{i=1}^\infty m^i \delta_{x^i} 
\text{ with } \{x^i\} \text{ distinct and }m^i\geq0\\
\infty & \text{otherwise}.
\end{cases}
\]

\begin{theorem}\label{first-order -limit}
Within the space $X$, we have 
\[
\Fea \, \stackrel{\Gamma}\longrightarrow \, \Fza \qquad {\rm as} \quad \eta \rightarrow 0.
\]
That is,  conditions $1$ and $2$ of Theorem $\ref{3D-first-order -limit}$ hold with $\Feb$ and $\Fzb$ replaced by 
$\Fea$ and $\Fza$. 
\end{theorem}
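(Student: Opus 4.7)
The plan is to follow the structure of the proof of Theorem~\ref{3D-first-order -limit}, adapting each step to the critical scaling of the $H^{-1}$ norm in two dimensions exhibited by the expansion~\pref{eq:rewrite-Hmo2}. First I would establish a 2D analog of Lemma~\ref{lemma:compactness}: for $v_\eta$ with bounded mass and $\Fea(v_\eta)$ bounded, the functions $w_\eta := \eta v_\eta$ satisfy $w_\eta\to 0$ in $L^1(\Ta)$, $|\nabla w_\eta|$ bounded in $L^1$, and $w_\eta^{1/1}$ hmm — here the appropriate application of Lions's Second Concentration-Compactness Lemma with exponents adjusted to $n=2$ gives $v_\eta \weakto v_0 = \sum_i m^i \delta_{x^i}$ along a subsequence. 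I would then prove a 2D counterpart of Lemmas~\ref{lemma:truncation} and~\ref{lemma:other_sequence}, with $L^{3/2}$ replaced by the 2D Sobolev exponent $L^2$ in \pref{ineq:perimeter-quantitative}, reducing to $v_\eta=\sum_{i=1}^n v_\eta^i$ with uniformly separated supports, each concentrating at $x^i$, and setting $m_\eta^i := \int_\Ta v_\eta^i$, $z_\eta^i(y):=\eta^2 v_\eta^i(x^i+\eta y)$.

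For the lower bound, I would apply the expansion~\pref{eq:rewrite-Hmo2} to each diagonal term and use that $g^{(2)}$ is bounded to obtain
\[
\invlogeta\,\Hmospace{v_\eta^i}{\Ta}^2 \;=\; \frac{(m_\eta^i)^2}{2\pi} \;-\; \frac{1}{2\pi\logeta}\int_{\R^2}\int_{\R^2}z_\eta^i(x)z_\eta^i(y)\log|x-y|\,dxdy \;+\; O(\invlogeta),
\]
so the second and third terms vanish as $\eta\to 0$ (the logarithmic term after rescaling is bounded uniformly in $\eta$ since the supports of $z_\eta^i$ have uniformly bounded $L^1$-mass). The perimeter term rescales to $\int_{\R^2}|\nabla z_\eta^i|$, and the isoperimetric inequality in $\R^2$ gives $\int_{\R^2}|\nabla z_\eta^i| \geq 2\sqrt{\pi m_\eta^i}$. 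Off-diagonal pairwise terms, weighted by $\invlogeta$, vanish since $\dist(\supp v_\eta^i,\supp v_\eta^j)>0$ makes $G_\Ta(x-y)$ bounded there. Summing and using continuity of $m\mapsto\fza(m)$ yields
\[
\liminf_{\eta\to 0}\Fea(v_\eta)\;\geq\; \sum_{i=1}^n \fza(m^i)\;\geq\;\sum_{i=1}^n \lscfza(m^i)\;=\;\Fza(v_0),
\]
and passing to the supremum over $n$ covers the countable case.

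For the upper bound, I would start from $v_0 = \sum_{i=1}^n m^i\delta_{x^i}$ with $\Fza(v_0)<\infty$ and, given $\delta>0$, use the definition~\pref{def:lscfz} to pick for each $i$ a finite splitting $m^i=\sum_{j=1}^{N_i}m^{i,j}$ with $\sum_j \fza(m^{i,j})\leq \lscfza(m^i)+\delta/n$. For each $(i,j)$ take $z^{i,j}$ to be the characteristic function of a disc of area $m^{i,j}$ in $\R^2$, so $\int|\nabla z^{i,j}|=2\sqrt{\pi m^{i,j}}$. Introduce an intermediate scale $d_\eta$ with $\eta\ll d_\eta\ll 1$ and, crucially, $\log d_\eta = o(\log\eta)$ (for instance $d_\eta = 1/\logeta$), together with fixed distinct vectors $p^{i,j}\in\R^2$, and define
\[
v_\eta^{i,j}(x) := \eta^{-2} z^{i,j}\bigl(\eta^{-1}(x-x^i-d_\eta p^{i,j})\bigr), \qquad v_\eta := \sum_{i,j} v_\eta^{i,j}.
\]
Then $v_\eta\weakto v_0$ and $v_\eta$ is admissible for $\Fea$ once $\eta$ is small. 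Applying~\pref{eq:rewrite-Hmo2} blob-by-blob yields $\sum_{i,j}\fza(m^{i,j})$ in the limit; cross-interactions between different blobs are controlled by the facts that $G_\Ta(x-y)\sim -(2\pi)^{-1}\log|x-y|$, the minimal pairwise separation is $\gtrsim d_\eta$, and the prefactor $\invlogeta$ beats $|\log d_\eta|$ by the choice of $d_\eta$. Hence $\limsup \Fea(v_\eta)\leq \Fza(v_0)+\delta$, and a standard diagonal argument in $\delta\downto 0$ concludes.

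The main obstacle, absent in three dimensions, is the failure of the subadditivity~\pref{e-0-subadd} for $\fza$: the superadditive term $m^2/(2\pi)$ makes $\lscfza$ strictly smaller than $\fza$ in general, so a single limit point mass $m^i\delta_{x^i}$ must generically be approximated by a \emph{cluster} of smaller disc-shaped blobs whose cross-interactions must be engineered to vanish. This forces the delicate choice of the separation scale $d_\eta$: it must tend to zero, so that the weak limit of the cluster is still $m^i\delta_{x^i}$ and each blob is admissible, yet it must decay slowly enough ($\log d_\eta = o(\log\eta)$) that the logarithmic pairwise interaction is killed by the prefactor $\invlogeta$. The borderline nature of this balance is the real quantitative content of the 2D result, and the matching lower bound works precisely because, under truncation into well-separated pieces of diameter $O(\eta)$, the expansion~\pref{eq:rewrite-Hmo2} isolates exactly the mass-squared contribution $\sum_i(m^i_\eta)^2/(2\pi)$ at leading order.
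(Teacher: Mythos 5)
Your upper bound is correct and is essentially the construction the paper intends: a cluster of near-optimal discs around each $x^i$ at mutual separation $d_\eta$ with $|\log d_\eta|=o(\logeta)$, so that the pairwise logarithmic interactions are killed by the prefactor $\invlogeta$ while each cluster still collapses to $m^i\delta_{x^i}$. The lower bound, however, contains a genuine gap. You truncate into components $v_\eta^i$ supported in fixed balls around the $x^i$ (following Lemma~\ref{lemma:other_sequence}) and then assert that $-\frac1{2\pi\logeta}\int\int z_\eta^i(x) z_\eta^i(y)\log|x-y|\,dxdy\to0$ because the $z_\eta^i$ have bounded mass. Bounded mass only bounds this quantity from \emph{below} (via rearrangement); the bound you actually need requires control of $\diam\supp z_\eta^i$, and a fixed-radius truncation gives only $\diam\supp z_\eta^i=O(1/\eta)$, so $\int\int z_\eta^iz_\eta^i\log|x-y|$ can be of order $\logeta$. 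Concretely, if the truncated component consists of two sub-blobs of masses $m_1,m_2$ at mutual distance $d_\eta$ with $\eta\ll d_\eta\ll1$ --- which is exactly your own recovery sequence --- then the middle term of your displayed identity converges to $-m_1m_2/\pi\neq0$, and $\invlogeta\,\Hmospace{v_\eta^i}{\Ta}^2\to\frac{m_1^2+m_2^2}{2\pi}$ rather than $\frac{(m_1+m_2)^2}{2\pi}$. Consequently your intermediate inequality $\liminf\Fea(v_\eta)\geq\sum_i\fza(m^i)$ is false whenever $\lscfza(m^i)<\fza(m^i)$: your upper-bound sequence contradicts it.

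The conclusion $\liminf\Fea(v_\eta)\geq\sum_i\lscfza(m^i)$ is nevertheless correct, but the argument must be run at a finer level. This is what the paper does, and why it bypasses Lemma~\ref{lemma:other_sequence} entirely: decompose $v_\eta$ into \emph{connected} components as in~\pref{split:v_eta-2d}; in two dimensions the diameter of a connected set is controlled by its perimeter, so~\pref{2d-conc-est} gives $\sum_i\diam\supp v_\eta^i=O(\eta)$, hence $\diam\supp z_\eta^i=O(1)$ and the logarithmic self-term is genuinely $O(1)$ for each connected piece. Applying your expansion and the isoperimetric inequality to each connected component yields $\liminf\Fea(v_\eta)\geq\liminf\sum_i\fza(m_\eta^i)$, the sum running over connected pieces; grouping the pieces according to their limit points and invoking the definition~\pref{def:lscfz} of $\lscfza$ as an infimum over splittings (together with its continuity, Corollary~\ref{cor:lscfz-cont}) then gives $\geq\sum_k\lscfza(m^k)=\Fza(v_0)$. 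With this replacement your argument closes.
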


The proof follows along exactly the same lines as the proof of Theorem~\ref{3D-first-order -limit}. It is in fact simpler, since a standard result on the approximation for  sets of finite perimeter (see for example Theorem  3.42 of \cite{AFP}) implies that, without loss of generality, we may assume  that a sequence $v_\eta$ with bounded energy (for $\eta$ sufficiently small) satisfies 
\begin{equation}
\label{split:v_eta-2d}
v_\eta = \sum_{i=1}^\infty v_\eta^i \quad {\rm with} \quad v_\eta^i = \frac{1}{\eta^{2}}\, \chi_{A_{\eta}^i}, 
\end{equation}
where the sets $A_{\eta}^i$ are connected, disjoint, smooth, and with diameters which tend to zero as $\eta \rightarrow 0$. Then the following estimate holds true in two dimensions: 
\begin{equation}\label{2d-conc-est}
\sum_{i=1}^\infty \diam(\supp v_\eta^i)
\, \leq \, \eta^2 \, \sum_{i=1}^\infty \int_\Ta |\nabla v_\eta^i|
\, \leq \, {\eta}\Fea(v_\eta) = O(\eta), 
\end{equation}
which can be used to bypass Lemma \ref{lemma:other_sequence}.

\subsection{Next-order behavior}
Turning to the next-order behavior, note that  
among all measures of mass $M$, the global minimizer of $\Fza$ is given by
\[
\min \left\{\Fza(v): \int_{\Ta} v = M\right\} = \lscfza(M).
\]
We recover the next term in the expansion as the limit of $\Fea - \lscfza$, appropriately rescaled, that is of the functional
\[
\Hea(v) := \left|\log\eta\right| \left[\Fea(v) - \lscfza\left(\int_\Ta v\right)\right].
\]

Here the situation is similar to the three-dimensional case in that for boundedness of the sequence $\Hea$ the limiting weights $m^i$ should satisfy two requirements: a minimality condition and  a compactness condition. The compactness condition is most simply written as the condition that
\begin{equation}
\label{cond:compactness-2d}
\lscfza(m^i) = \fza(m^i)
\end{equation}
and corresponds to the condition in three dimensions that there exist a minimizer of the minimization problem~\pref{def:fe3}.

In two dimensions, the minimality condition~\pref{pb:equalmass} provides a characterization that is stronger than the in three dimensions: 
\begin{lemma}
\label{lemma:charmin}
Let $\{m^i\}_{i\in \N}$ be a solution of the minimization problem
\begin{equation}
\label{pb:equalmass}
\min \left\{\sum_{i=1}^\infty \fza(m^i): m^i\geq 0, \ 
   \sum_{i=1}^\infty m^i = M.\right\}.
\end{equation}
Then only a finite number of the terms  $m^i$ are non-zero and all the non-zero terms are equal. In addition, if one $m^i$ is less than $2^{-2/3}\pi$, then it is the only non-zero term.  
\end{lemma}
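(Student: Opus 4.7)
The plan is to combine the first-order KKT conditions for the minimization with two merging comparisons. From $\fza(m) = m^2/(2\pi) + 2\sqrt{\pi m}$ one computes $\fza'(m) = m/\pi + \sqrt{\pi/m}$ and $\fza''(m) = 1/\pi - \tfrac12\sqrt{\pi}\,m^{-3/2}$, so $\fza''$ vanishes exactly at $m^\ast := 2^{-2/3}\pi$, and $\fza'$ is strictly decreasing on $(0, m^\ast]$ and strictly increasing on $[m^\ast, \infty)$, with $\fza'(0^+)=+\infty$. Hence for any $\lambda$, the equation $\fza'(m) = \lambda$ admits at most two positive solutions, which I denote $\alpha \leq m^\ast \leq \beta$. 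At a minimizer of~\pref{pb:equalmass}, the KKT conditions for the constraints $m^i\ge0$, $\sum m^i = M$ yield $\fza'(m^i) = \lambda$ for every nonzero $m^i$ (with $\fza'(0^+)=+\infty$ automatically validating the complementary slackness at the inactive constraints). So every nonzero entry lies in $\{\alpha,\beta\}$; since both are strictly positive and $\sum m^i = M<\infty$, only finitely many $m^i$ are nonzero.

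Next, I would rule out (i) two copies of $\alpha$, and (ii) a mixed configuration in which both an $\alpha$- and a $\beta$-piece occur, by showing that merging strictly decreases the energy. For (i) a direct computation gives
\[
\fza(2\alpha) - 2\fza(\alpha) = \frac{\alpha^2}{\pi} - (4-2\sqrt 2)\sqrt{\pi\alpha}<0 \qquad \text{whenever } \alpha\le m^\ast,
\]
the strict inequality reducing to the elementary fact $2^{-2/3} < (4-2\sqrt{2})^{2/3}$, equivalently $1/2 < 4-2\sqrt 2$. For (ii), set $a=\sqrt\alpha$, $b=\sqrt\beta$; since $\alpha\ne\beta$, the first-order identity $\fza'(\alpha)=\fza'(\beta)$ simplifies to $ab(a+b) = \pi^{3/2}$. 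Substituting this into
\[
\fza(\alpha+\beta) - \fza(\alpha) - \fza(\beta) = \frac{\alpha\beta}{\pi} - 2\sqrt\pi\bigl[\sqrt\alpha + \sqrt\beta - \sqrt{\alpha+\beta}\bigr],
\]
introducing $s=a+b$ and $u = \pi^{3/2}/s^3$, and using $\sqrt{a^2+b^2}=s\sqrt{1-2u}$, reduces the merging cost to $\pi^{2}(ab)^{-1}\bigl[u + 2\sqrt{1-2u}-2\bigr]$. AM--GM gives $s^3 \ge 4\pi^{3/2}$, so $u \in (0,1/4]$, and the map $v\mapsto v+2\sqrt{1-2v}-2$ is strictly decreasing on $[0,1/2]$ starting from the value $0$ at $v=0$; hence the bracketed quantity is strictly negative, and merging strictly lowers the energy.

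Combining (i) and (ii), every minimizer consists of $n$ equal nonzero pieces of a common value $\beta$ (with $\beta\ge m^\ast$ when $n\ge 2$), or is concentrated on a single piece. In particular $\beta = M/n \ge 2^{-2/3}\pi$ whenever $n\ge 2$, which is precisely the contrapositive of the second claim of the lemma. The main obstacle is the algebraic reduction in step (ii): the simplification to the clean scalar quantity $u+2\sqrt{1-2u}-2$ hinges essentially on the first-order constraint $ab(a+b)=\pi^{3/2}$ together with the elementary identity $a^2+b^2=s^2-2ab$, without which the favourability of merging a mixed pair cannot easily be recognized.
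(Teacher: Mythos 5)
Your proposal is correct, and for the hard part of the lemma it takes a genuinely different and substantially shorter route than the paper. The paper also starts from the concave/convex structure of $\fza$ about the inflection point $2^{-2/3}\pi$ (in its rescaled variables, $f(x)=x^2+\sqrt x$ with inflection at $1/4$), concluding that a minimizer has at most one entry in the concave region and equal entries in the convex region; it then rules out the remaining mixed configuration (one small piece plus $n$ equal large pieces $y$) by comparing against redistributions over all $n+1$ pieces, which forces a bound $y\le y_m(n)$, a two-case analysis, an auxiliary lemma, and a page of explicit estimates. You instead extract more from stationarity: all nonzero entries share a common value of $\fza'$, hence lie in a two-point set $\{\alpha,\beta\}$ straddling the inflection point, and the coexistence of $\alpha\neq\beta$ is killed by a single pairwise merge whose sign, after substituting the stationarity identity $ab(a+b)=\pi^{3/2}$, reduces to the manifest negativity of $u+2\sqrt{1-2u}-2$ on $(0,1/4]$. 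That substitution is the idea the paper does not exploit, and it is exactly what eliminates the case analysis; the price is that your argument leans on the exact algebraic form of $\fza'$, whereas the paper's cruder comparisons need only the inequality $y\le y_m(n)$. Your finiteness argument (all nonzero values lie in a two-point set of positive numbers with finite sum) is also cleaner than the paper's. One small point of hygiene: rather than invoking KKT for an infinite-dimensional problem with $\fza$ non-differentiable at $0$, it is cleaner to justify $\fza'(m^i)=\fza'(m^j)$ by the elementary two-sided mass transfer between two strictly positive entries, which is all your argument actually uses; the complementary-slackness remark about the zero entries is never needed.
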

The proof is presented in Section \ref{sec:proofs}.
We will also need the following corollary on the stability of $\Fza$ under perturbation of mass:
\begin{corollary}
\label{cor:lscfz-cont}
The function $\lscfza$ is Lipschitz continuous on $[\delta,1/\delta]$ for any $0<\delta<1$.
\end{corollary}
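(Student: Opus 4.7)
The approach is to compare near-optimal decompositions at the two masses via rescaling, bypassing any appeal to existence of minimizers. Fix $0<\delta<1$, let $m_1,m_2\in[\delta,1/\delta]$ with $m_1\le m_2$, and set $r := m_2/m_1 \in [1,1/\delta^2]$.

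First I would establish monotonicity $\lscfza(m_1)\le\lscfza(m_2)$ by rescaling downward: given any admissible $\{m^j\}$ for $\lscfza(m_2)$ with $\sum_j\fza(m^j)\le\lscfza(m_2)+\epsilon$, the family $\{m^j/r\}$ sums to $m_1$ and, since $\fza$ is nondecreasing on $[0,\infty)$, each $\fza(m^j/r)\le \fza(m^j)$; sending $\epsilon\to 0$ gives monotonicity.

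Second, for the one-sided upper bound $\lscfza(m_2)-\lscfza(m_1)\le L(\delta)(m_2-m_1)$ I rescale upward. Let $\{m^j\}$ be admissible for $\lscfza(m_1)$ with $\sum_j\fza(m^j)\le\lscfza(m_1)+\epsilon$; then $\{r m^j\}$ is admissible for $\lscfza(m_2)$, and using the explicit form $\fza(s) = s^2/(2\pi) + 2\sqrt{\pi s}$ one has the pointwise identity
\[
\fza(r m^j)-\fza(m^j) = (r^2-1)\frac{(m^j)^2}{2\pi} + 2(\sqrt{r}-1)\sqrt{\pi m^j}.
\]
Summing over $j$,
\[
\lscfza(m_2)-\lscfza(m_1) \le \epsilon + \frac{r^2-1}{2\pi}\sum_j (m^j)^2 + 2(\sqrt{r}-1)\sum_j \sqrt{\pi m^j}.
\]
The two tail sums are controlled uniformly via the termwise bounds $(m^j)^2\le m_1\,m^j$ and $2\sqrt{\pi m^j}\le\fza(m^j)$, giving $\sum_j(m^j)^2\le m_1^2\le \delta^{-2}$ and $2\sum_j \sqrt{\pi m^j}\le \lscfza(m_1)+\epsilon\le \fza(1/\delta)+\epsilon$. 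Elementary estimates on $[\delta,1/\delta]$ yield $r^2-1\le C_1(\delta)(m_2-m_1)$ and $\sqrt{r}-1\le C_2(\delta)(m_2-m_1)$; letting $\epsilon\to 0$ then produces the Lipschitz bound, which combined with monotonicity gives $|\lscfza(m_1)-\lscfza(m_2)|\le L(\delta)|m_1-m_2|$.

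The main obstacle, and the reason for choosing the scaling strategy over a direct invocation of Lemma~\ref{lemma:charmin}, is that the infimum defining $\lscfza(m)$ may in principle involve countably many nonzero terms and need not be attained, so one cannot assume from the outset that the comparison is between two decompositions into finitely many equal masses. The two pointwise inequalities above circumvent this: both remain valid for infinite sums and absorb the dependence on the particular decomposition into $m_1$ and $\sum_j\fza(m^j)$, quantities we already control uniformly.
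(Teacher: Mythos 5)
Your proof is correct, and it takes a genuinely different route from the paper's. The paper's argument leans entirely on Lemma~\ref{lemma:charmin}: once one knows that the infimum defining $\lscfza(M)$ is attained by finitely many \emph{equal} masses, with the number of pieces bounded by $M/\delta$ uniformly on $[\delta,1/\delta]$, the function $\lscfza$ becomes the pointwise minimum of a finite family of maps $M\mapsto n\,\fza(M/n)$, each Lipschitz on that interval, and the conclusion is immediate. You instead avoid the structure theory altogether and compare near-optimal (possibly infinite, possibly non-attaining) decompositions at $m_1$ and $m_2$ via the mass rescaling $m^j\mapsto rm^j$, absorbing the decomposition-dependence through the two termwise bounds $(m^j)^2\le m_1 m^j$ and $2\sqrt{\pi m^j}\le\fza(m^j)$; both steps check out, and the monotonicity half and the one-sided Lipschitz half combine correctly. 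What the paper's route buys is brevity \emph{given} Lemma~\ref{lemma:charmin}, which it needs anyway for the characterization of $\widetilde{\mathcal M}$; what your route buys is independence from that lemma and from attainment of the infimum, which makes it more robust — it is essentially the same dilation device the paper itself uses in the proof of Lemma~\ref{lemma:char_limit} to bound $\fzb'$ in three dimensions, where no explicit description of minimizers is available.
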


\bigskip 

The limit as $\eta\to0$ of the functional $\Hea$ has one additional term in comparison to the three-dimensional case, which arises from the second term in~\pref{eq:rewrite-Hmo2},\begin{equation}
\label{local_terms}
-\frac1{2\pi}\sum_{i=1}^\infty \int_{\R^2}\int_{\R^2} z_\eta^i(x)z_\eta^i(y) \log |x-y|\, dxdy .
\end{equation}
To motivate the limit of this term, recall that~$z^i_\eta$ appears in the minimization problem~\pref{def:fe}, which has only balls as solutions. Assuming~$z^i_\eta$ to be a characteristic function of a ball of mass $m^i$, we calculate that the first term in~\pref{local_terms} has the value $f_0(m^i)$, where 
\[
 f_0(m) := \frac{m^2}{8\pi} \left(3-2\log\frac m\pi\right).
\]

We therefore define the intended $\Gamma$-limit $\Hza$ of $\Hea$ as follows. First let us introduce some notation: for $n\in \N$ and $m>0$ the sequence $n\otimes m$ is defined by
\[
(n\otimes m)^i := \begin{cases}
  m & 1\leq i\leq n\\
  0 & n+1 \leq i < \infty.
\end{cases}
\]
Let $\widetilde{\mathcal M}$ be the set of optimal sequences for the problem~\pref{pb:equalmass}:
\[
\widetilde{\mathcal M} := \left\{ n\otimes m : n\otimes m \text{ minimizes }\pref{pb:equalmass} \text{ for }M= nm, \text{ and }\lscfza(m)=\fza(m)\right\}.
\]
Then define
\begin{equation}
\label{def:Hz}
\Hza(v): = \begin{cases}
\displaystyle n \left\{ f_0(m) \, +\,  m^2\, g^{(2)} (0) \right\}\,  +& \\
  \qquad {} \,\,  
  \displaystyle\frac {m^2}2  \sum_{\substack{i,j\geq 1\\ i\not=j}} G_\Ta(x^i-x^j)
     &\displaystyle\text{if } v = m\sum_{i=1}^n \delta_{x^i}, \,\, \{x^i\} \text{ distinct}, \, \, n\otimes m \in \widetilde{\mathcal M},\\
\infty & \text{otherwise}.
\end{cases}
\end{equation}
\medskip
\begin{theorem}
\label{th:sharpnextlevel}
Within the space $X$, we have 
\[
\Hea \stackrel{\Gamma}\longrightarrow \Hza \qquad {\rm as} \quad \eta \rightarrow 0. 
\]
That is,  Conditions $1$ and $2$ of Theorem $\ref{first-order -limit}$ hold with $\Fea$ and $\Fza$ replaced with 
$\Hea$ and $\Hza$ respectively. 
\end{theorem}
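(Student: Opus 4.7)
The plan is to mimic the proof of Theorem~\ref{3D-th:sharpnextlevel}, using the two-dimensional analogue of the decomposition~\pref{3d-decomposition} together with the expansion~\pref{eq:rewrite-Hmo2} in place of~\pref{3D-decomp}, and exploiting the simplification~\pref{2d-conc-est} in place of Lemma~\ref{lemma:other_sequence}. By~\pref{split:v_eta-2d} and~\pref{2d-conc-est}, any sequence $v_\eta$ with $\Hea(v_\eta)$ bounded may be assumed to be of the form $v_\eta=\sum_i v_\eta^i$ with $v_\eta^i = \eta^{-2}\chi^{}_{A_\eta^i}$ and $A_\eta^i$ connected, mutually disjoint, with vanishing diameters; passing to a subsequence one also has $v_\eta^i \weakto m^i\delta_{x^i}$ with $x^i$ distinct and $m^i_\eta := \int v_\eta^i \to m^i$. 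As in~\pref{def-z}, set $z_\eta^i(x):=\eta^{2}v_\eta^i(x^i+\eta x)$.

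For the lower bound I would insert the two-dimensional counterpart of the decomposition~\pref{3d-decomposition}. Using~\pref{eq:rewrite-Hmo2} once for each diagonal term and the cross terms separately, and collecting the $\logeta$-factors, one obtains
\begin{align*}
\Hea(v_\eta) &= \logeta\sum_i\Bigl[\int_{\R^2}|\nabla z_\eta^i|-2\sqrt{\pi m_\eta^i}\Bigr]
  +\logeta\Bigl[\sum_i\fza(m_\eta^i)-\lscfza\Bigl(\sum_i m_\eta^i\Bigr)\Bigr]\\
&\qquad
  -\frac1{2\pi}\sum_i\int_{\R^2}\!\!\int_{\R^2} z_\eta^i(x)z_\eta^i(y)\log|x-y|\,dxdy
  +\sum_i\int_\Ta\!\!\int_\Ta v_\eta^i(x)v_\eta^i(y)\,g^{(2)}(x-y)\,dxdy\\
&\qquad
  +\sum_{i\neq j}\int_\Ta\!\!\int_\Ta v_\eta^i(x)v_\eta^j(y)\,G_\Ta(x-y)\,dxdy.
\end{align*}
Both bracketed terms in the first line are non-negative (the first by the isoperimetric inequality, the second by the definition of $\lscfza$). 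Boundedness of $\Hea(v_\eta)$ therefore forces (i) $\int|\nabla z_\eta^i|-2\sqrt{\pi m_\eta^i}=O(\invlogeta)$, so that by the quantitative isoperimetric inequality and tightness (inherited as in Lemma~\ref{lemma:other_sequence}) each $z_\eta^i$ converges in $L^1(\R^2)$, up to translation, to the characteristic function of a disc of mass $m^i$; and (ii) $\sum_i\fza(m^i)=\lscfza(\sum_i m^i)$, which by Lemma~\ref{lemma:charmin} and the condition $\lscfza(m^i)=\fza(m^i)$ implies $\{m^i\}\in\widetilde{\mathcal M}$. Given these two facts, the $\log|x-y|$ self-interaction converges to $f_0(m^i)$ (by the explicit computation on a disc that motivates the definition of $f_0$), the $g^{(2)}$ self-interaction converges to $(m^i)^2 g^{(2)}(0)$ by continuity of $g^{(2)}$ at $0$, and the cross terms converge to $m^im^j G_\Ta(x^i-x^j)$ by continuity of $G_\Ta$ off the diagonal. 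Summing and using the symmetry in $i,j$ to produce the factor $1/2$, the liminf equals $\Hza(v_0)$.

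For the upper bound, given $v_0=m\sum_{i=1}^n\delta_{x^i}$ with $n\otimes m\in\widetilde{\mathcal M}$, I simply take $v_\eta^i:=\eta^{-2}\chi^{}_{B_\eta(x^i)}$, where $B_\eta(x^i)$ is the disc centred at $x^i$ of area $\eta^2 m$, and set $v_\eta:=\sum_i v_\eta^i$. The corresponding $z_\eta^i$ is exactly the characteristic function of a disc of mass $m$, so $\int|\nabla z_\eta^i|=2\sqrt{\pi m}$ and the first bracketed term above vanishes identically; the minimality condition $\sum\fza(m)=\lscfza(nm)$ makes the second bracket vanish; and the remaining three groups of terms converge, by the same arguments as in the lower bound, to the three groups that define $\Hza(v_0)$. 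Hence $\lim \Hea(v_\eta)=\Hza(v_0)$.

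The main obstacle is the rigidity step in the lower bound: extracting from the single inequality $\int|\nabla z_\eta^i|\leq 2\sqrt{\pi m_\eta^i}+O(\invlogeta)$ the $L^1$-convergence of $z_\eta^i$ to a disc, and then upgrading this to convergence of the non-local quantity $\int\!\!\int z_\eta^i z_\eta^i\log|x-y|$ to $f_0(m^i)$. The first part uses the quantitative isoperimetric inequality of Fusco--Maggi--Pratelli type together with the \emph{a priori} bound on $\supp z_\eta^i$ derivable from the perimeter control; the second requires a mild uniform control on $\diam\supp z_\eta^i$ to handle the long-range behaviour of $\log|x-y|$, which again follows from the near-optimality in the isoperimetric inequality. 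Everything else is essentially bookkeeping analogous to the three-dimensional proof.
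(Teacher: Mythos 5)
Your proposal follows the paper's own route for this theorem essentially verbatim: the same decomposition of $\Hea(v_\eta)$ into non-negative $\logeta$-weighted local and mass-distribution excesses plus the $\log$-self-interaction, the $g^{(2)}$-self-interaction and the Green's-function cross terms, the same use of~\pref{split:v_eta-2d} and~\pref{2d-conc-est} in place of Lemma~\ref{lemma:other_sequence}, and the same conclusions (the compactness property~\pref{cond:compactness-2d}, membership in $\widetilde{\mathcal M}$ via Lemma~\ref{lemma:charmin}, explicit discs for the upper bound). The rigidity step you single out --- upgrading the vanishing isoperimetric deficit of $z_\eta^i$ to $L^1$-convergence to a disc so that the $\log|x-y|$ self-interaction converges to $f_0(m^i)$ --- is indeed the one piece the paper leaves implicit, and your route via the quantitative isoperimetric inequality (or, more simply, $BV$-compactness on the uniformly bounded supports, $\diam\supp z_\eta^i\leq\int_{\R^2}|\nabla z_\eta^i|=O(1)$, combined with uniqueness of the isoperimetric minimizer) closes it.

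Two points to tighten. First, the distinctness of the limit points $x^i$ of different components is not automatic and should be argued as the paper does: if two components converged to the same point, the corresponding cross term $\int\int v_\eta^i v_\eta^j G_\Ta$ would tend to $+\infty$ while all other terms are bounded below, contradicting the boundedness of $\Hea(v_\eta)$; the same mechanism reduces the analysis to finitely many components. Second, the phrase ``using the symmetry in $i,j$ to produce the factor $1/2$'' is not a valid manipulation: the cross term is a sum over ordered pairs and its limit is $m^2\sum_{i\not=j}G_\Ta(x^i-x^j)$, with no factor $1/2$ appearing. (The factor $\frac{m^2}2$ in the displayed definition of $\Hza$ is in fact inconsistent with the three-dimensional analogue $\Hzb$, where no such factor occurs; the limit your computation actually produces is the one without the $1/2$, and you should state that rather than invoke a symmetry that does not generate it.)
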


The proof of this theorem again closely follows that of Theorem~\ref{3D-th:sharpnextlevel}. The compactness property~\pref{cond:compactness-2d} in the lower bound follows by a simpler argument than in three dimensions, however. Using the division into components with connected support~\pref{split:v_eta-2d}, we have
\begin{align}
\Hea(v_\eta) &= \left|\log\eta\right| \left[\Fea(v_\eta) - \lscfza\left(\int_\Ta v_\eta\right)\right]
\nonumber
\\
&=\logeta\sum_{i=1}^\infty \left[\int_{\R^2} |\nabla z_\eta^i| + \frac1{2\pi} \Bigl(\int_{\R^2}z_\eta^i\Bigr)^2- \fza\Bigl(\int_{\R^2}z_\eta^i\Bigr)\right]\nonumber\\
\label{ineq:H1}
&\qquad{}+ \logeta \sum_{i=1}^\infty \left[\fza\Bigl(\int_{\R^2}z_\eta^i\Bigr)
   - \lscfza\left(\int_{\R^2} z^i_\eta\right)\right]\\
\label{ineq:H2}&\qquad{}+ \logeta \left[ \sum_{i=1}^\infty \lscfza\Bigl(\int_{\R^2}z_\eta^i\Bigr)
   - \lscfza\left(\int_\Ta v_\eta\right)\right] \\
&\qquad{}+\sum_{i=1}^\infty \left\{
  -\frac1{2\pi} \int_{\R^2}\int_{\R^2} z_\eta^i(x)\, z_\eta^i(y) \, 
      \log|x-y|\, dx\, dy
  + \int_\Ta \int_\Ta v_\eta^i(x) \, v_\eta^i(y) \, g^{(2)} (x-y)\, dx\, dy \right\}\notag \\
  &\qquad {}+  \sum_{\substack{i,j = 1 \\i\not=j}}^\infty
    \int_\Ta \int_\Ta v_\eta^i(x) \,  v_\eta^j(y) \, G_\Ta(x-y)\, dx\, dy\label{ineq:H}.
\end{align}
The last two lines in the development above are uniformly bounded from below. Since $\Hea(v_\eta)$ is bounded from above, it follows that the terms in square brackets, which are non-negative, tend to zero. In combination with the continuity of $\fza$ and $\lscfza$ this implies the compactness property~\pref{cond:compactness-2d}.
We also remark that because  the contents of the square brackets in 
(\ref{ineq:H1}) and  (\ref{ineq:H2}) are zero in the limit, we find with the aid of Lemma 
\ref{lemma:charmin} that the number of concentration points $x^i$ in the weak limit of $v_\eta$ is finite with equal coefficient weights. Moreover, 
we may assume that there are a finite number of different components of $v_\eta$, and each must converge to a different $x^i$; otherwise, the last term in (\ref{ineq:H})   would tend to $\infty$  as $\eta$ tends to $0$.    

\subsection{Proofs of Lemma \ref{lemma:charmin} and Corollary~\ref{cor:lscfz-cont}}
\label{sec:proofs}

The proof of Lemma~\ref{lemma:charmin}  contains two elements. The first element is general, and only uses the property that $\fza$ is concave on $\left[0, \frac{\pi}{\sqrt[3]{4}}\right] $ and convex on  $\left[\frac{\pi}{\sqrt[3]{4}}, \infty\right) $. This property reduces the possibilities to a combination of (a) a finite number of equal $m^i$ in the convex region with possibly (b) one $m^i$ in the concave region (see~\cite[Section~5.4]{LP} for a similar reasoning). The second part, in which possibility (b) above is excluded, depends heavily on the exact form of~$\fza$, and is an uninspiring exercise in estimation. \\

{\bf Proof of Lemma~\ref{lemma:charmin}}:  
For this proof only,  let us abuse notation and  use $x, x^i, y, z$   to denote 
positive real numbers. 
We note that 
\[
\fza(m) = 2^{5/3} \pi   \, f\left(\frac{m}{\pi \, 2^{4/3} }\right)
\qquad\text{with}\qquad
f(x) = x^2 + \sqrt{ x}.
\]
We therefore continue with $f$ instead of $\fza$. 
Since $f$ is concave on $\left(0,\frac{1}{4}\right]$ and convex on $\left[\frac{1}{4},\infty\right)$, the following hold true: 
\begin{itemize}
\item There is at most one $x^i\in \left(0,\frac{1}{4}\right)$; for if $x^i,x^j\in \left(0,\frac{1}{4}\right)$, then
\[
\frac {d^2}{d\e^2} (f (x^i+\e) + f (x^j-\e))\Bigr|_{\e=0}
 \,  =\,  f''(x^i) + f''(x^j) < 0,
\]
contradicting minimality. Therefore only one non-zero element is less than $\frac{1}{4}$, which also implies that the number of non-zero elements is finite.
\item The set of elements $\left\{x^i: x^i\geq \frac{1}{4}\right\}$ is a singleton, since the function is convex on $\left[\frac{1}{4},\infty\right)$.
\end{itemize}

Therefore the lemma is proved if we can show the following.
Take any sequence of the form
\begin{equation}\label{testpoint}
x^i = \begin{cases}
  x & i = 1\\
  y & i = 2, \dots, n+1\\
  0 & i\geq n+2,
\end{cases}
\end{equation}
with $x<1/4\leq y$; then this sequence can {\it not} be a solution of the minimization problem~\pref{pb:equalmass}.

To this end, we first note that
\[
(n+1)f\left(\frac n{n+1} \, y\right) - nf(y) =
  \frac n{n+1}\sqrt y \left(-y^{3/2} + (n+1)\left(\sqrt{\frac{n+1}n}-1\right)\right).
\]
If this expression is negative, then by replacing the $n$ copies of $y$ in (\ref{testpoint}) 
by $n+1$ copies of $ny/(n+1)$ we decrease the value in~\pref{pb:equalmass}. Therefore we can assume that 
\[
\frac14 \leq y\leq y_m(n) := (n+1)^{2/3}\left(\sqrt{\frac{n+1}n}-1\right)^{2/3}.
\]

We distinguish two cases. \textbf{Case one:} If $y+x/n < y_m(n)$, then we compare  our sequence (\ref{testpoint}) with $n$ copies of $z := y+x/n$:
\begin{align*}
f(x)+nf(y) - nf(y+x/n) &= f(x) +nf(z-x/n) - nf(z)\\
&= x^2 \Bigl(1+\frac1n\Bigr) - 2xz + \sqrt x + n\sqrt{z-\frac xn} - n \sqrt z\\
&=: g(x,z).
\end{align*}
We now show that $g$ is strictly positive for all relevant values of $x$ and $z$, i.e. for $0<x<1/4$ and $1/4+1/n< z< y_m(n)$. 

Differentiating $g(x,z)/x$ we find that
\begin{equation}
\label{eq:derivgx}
\frac \partial{\partial x} \frac{g(x,z)}x 
= 1+\frac1n - \frac 1{2x^{3/2}} 
  - \frac n{x^2}\left( \sqrt{z-\tfrac xn} - \sqrt z\right)
  - \frac1{2x\sqrt{z-\tfrac xn}}.
\end{equation}
This expression is negative: $x<1/4$ implies that
\[
1+\frac 1n - \frac1{2x^{3/2}} < 0,
\]
and by concavity of the square root function we have  
\[
\sqrt{z} \leq \sqrt{z-\frac xn} +\frac1{2\sqrt{z-\tfrac xn}}\, \frac xn,
\]
so that the last two terms in~\pref{eq:derivgx} together are also negative. 

Since $g(x,z)/x$ is decreasing in $x$, it is bounded from below by 
\[
4g(1/4,z) = \frac14 \left(1+\frac1n\right) -2z + 2
  + 4n \left( \sqrt{z-\tfrac1{4n}} - \sqrt z\right).
\]
The right-hand side of this expression is concave in $z$, and therefore bounded from 
below by the values at $z=(1+1/n)/4$ and at $z=y_m(n)$. The first of these is  
\[
-\frac14\left(1+\frac1n\right) + 2 + 2n\left(1-\sqrt{1+\tfrac1n}\right)
\geq -\frac 12 + 2 + 2n\left(1 - (1-1/2n)\right) = \frac12.
\]
For the second, the expression 
\[
4g(1/4,y_m(n)) = \frac14 \left(1+\frac1n\right) -2y_m(n) + 2
  + 4n \left( \sqrt{y_m(n)-\tfrac1{4n}} - \sqrt {y_m(n)}\right)
\]
is positive for $n=1,2$, as can be checked explicitly; for $n\geq 3$, we estimate $2^{-2/3}\leq y_m(n)\leq ((n+1)/2n)^{2/3}$ and therefore
\begin{align*}
&\frac14 \left(1+\frac1n\right) -2y_m(n) + 2
  + 4n \left( \sqrt{y_m(n)-\tfrac1{4n}} - \sqrt {y_m(n)}\right)\\
&\qquad{}\geq \frac14 - 2\left(\frac{n+1}{2n}\right)^{2/3}
    + 2 - \frac1{2\sqrt{y_m(n) - \tfrac1{4n}}}\\
&\qquad {}\geq \frac94 - 2\left(\frac{n+1}{2n}\right)^{2/3}
  - \frac1{2\sqrt{2^{-2/3} - \tfrac1{12}}}
\end{align*}
The right-hand side of this expression is strictly positive for all $n\geq 3$. This concludes the proof of case one.

For \textbf{case two} we assume that $y+x/n\geq y_m(n)$, set
\[
z := \frac {ny+x}{n+1}, 
\]
and compare the original structure with $n+1$ copies of $z$:
\begin{align*}
f(x)+nf(y) - (n+1)f(z) &= f(x) +nf\left(\frac{n+1}nz-\frac xn\right) - (n+1)f(z)\\
&= \frac{n+1}n (z-x)^2 + \sqrt x + \sqrt n\sqrt{({n+1})z-x} - (n+1) \sqrt z\\
&=: h(x,z).
\end{align*}
Note that the admissible values for $z$ are
\begin{equation}
\label{def:z-bound}
\frac n{n+1} y_m(n) \leq z \leq \frac{ny_m(n)+x}{n+1} \leq y_m(n).
\end{equation}

We first restrict ourselves to $n\geq2$, and state an intermediary lemma:
\begin{lemma} 
\label{lemma:lowerbound-h}
Let $n\geq 2$. Then for all $0<x<1/4$ and for all $z$ satisfying~$\pref{def:z-bound}$, 
\[
h(x,z) > \min\{h(0,z),h(1/4,z)\}.
\]
\end{lemma}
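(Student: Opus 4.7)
My plan is to prove the inequality by showing that $h(\cdot,z)$ attains its minimum on $[0,1/4]$ only at the endpoints, with no interior local minimum. The strict inequality then follows from the fact that $\partial_x h(x,z)\to+\infty$ as $x\to 0^+$ (due to the $\sqrt x$ term), which forces $h(x,z)>h(0,z)$ in a right-neighbourhood of $0$, together with the analogous one-sided control at $x=1/4$.

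The first step is to analyse the sign of $\partial_x^2 h$ and $\partial_x^3 h$ on the relevant domain. A direct calculation shows that $\partial_x^3 h(x,z)>0$ is equivalent to $(n+1)z-x>n^{1/5}x$, and using the bound $z\geq \tfrac{n}{n+1}y_m(n)$ together with $x\leq 1/4$, this reduces to a one-parameter inequality in $n$ that is verified by explicit computation for small $n$ and by the uniform lower bound $y_m(n)\geq 2^{-2/3}$ for $n\geq 3$. Hence $\partial_x^2 h(\cdot,z)$ is strictly increasing on $(0,1/4)$, and since it tends to $-\infty$ as $x\to 0^+$, there is a unique $x_0(z)\in(0,1/4]$ such that $h(\cdot,z)$ is strictly concave on $(0,x_0(z))$ and strictly convex on $(x_0(z),1/4)$.

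The concavity-then-convexity structure implies that $\partial_x h(\cdot,z)$ is unimodal: decreasing on $(0,x_0(z))$ and increasing on $(x_0(z),1/4)$. Together with $\partial_x h(0^+,z)=+\infty$, this means $\partial_x h$ has at most two zeros on $(0,1/4)$, and correspondingly $h(\cdot,z)$ has at most one interior local maximum followed by at most one interior local minimum (the latter only if $\partial_x h$ recovers to $0$ after its negative excursion). The main obstacle is precisely to exclude this interior local minimum; equivalently, to show that $\partial_x h(1/4,z)\leq 0$ throughout the admissible range of $z$, so that $\partial_x h$ has at most one zero in $(0,1/4)$ and $h(\cdot,z)$ is increasing, then decreasing.

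I would handle this last step by inserting the upper bound $z\leq y_m(n)$ and the lower bound $z\geq \tfrac{n}{n+1}y_m(n)$ into the explicit expression
\[
\partial_x h(1/4,z)=-\frac{2(n+1)}{n}\Bigl(z-\tfrac14\Bigr)+1-\frac{\sqrt n}{2\sqrt{(n+1)z-1/4}},
\]
and checking that the resulting one-variable estimate in $n$ is strictly negative. For $n=2$ and $n=3$ this reduces to a direct numerical verification using the closed form of $y_m(n)$; for $n\geq 4$, the linear term $-\tfrac{2(n+1)}{n}(z-\tfrac14)$ dominates because $ny_m(n)\to\infty$, while the remaining contributions stay bounded, so the claim follows by a monotonicity-in-$n$ argument. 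Once $\partial_x h(1/4,z)\leq 0$ is secured, the unimodal structure of $h'$ places the minimum of $h(\cdot,z)$ on $[0,1/4]$ at an endpoint, and the strict inequality on $(0,1/4)$ is immediate from the slope information at the two endpoints.
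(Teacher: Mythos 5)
Your proposal is correct in outline but takes a genuinely different route from the paper's proof. The paper splits $[0,1/4]$ at $x=4/25$: it shows $h_{xx}<0$ on $(0,4/25]$ (so $h(\cdot,z)$ is strictly concave there) and $h_x<0$ on $[4/25,1/4]$ (so $h(\cdot,z)$ is strictly decreasing there), which together force the minimum onto the endpoints. You instead determine the global shape of $h(\cdot,z)$ via $h_{xxx}>0$ — your equivalence with $(n+1)z-x>n^{1/5}x$ is right, and the inequality does hold on the admissible region since $(n+1)z\geq n\,y_m(n)\geq n\,2^{-2/3}$ while $(1+n^{1/5})x\leq (1+n^{1/5})/4$ — deduce that $h_x$ is first strictly decreasing then strictly increasing with $h_x(0^+,z)=+\infty$, and then need only the single endpoint estimate $h_x(1/4,z)\leq 0$ to conclude that $h(\cdot,z)$ is increasing-then-decreasing on $[0,1/4]$. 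This buys you a strictly weaker sign condition on $h_x$ (one point instead of a whole subinterval) at the cost of one more derivative; both proofs ultimately reduce to explicit estimates in $n$ and $z$ using \pref{def:z-bound} and $2^{-2/3}\leq y_m(n)\leq y_m(2)$.

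One step of your plan needs repair: the justification of $h_x(1/4,z)\leq 0$ for $n\geq 4$. The coefficient of the quadratic term in $h$ is $(n+1)/n$, not $n+1$, so the "linear" term does not dominate: using $z\geq \tfrac n{n+1}y_m(n)$ one only gets
\[
-\frac{2(n+1)}n\Bigl(z-\frac14\Bigr)\;\leq\;-2y_m(n)+\frac{n+1}{2n},
\]
which stays bounded (it tends to $-2\cdot 2^{-2/3}+\tfrac12\approx -0.76$ as $n\to\infty$) and by itself never beats the $+1$ coming from $\tfrac1{2\sqrt x}$ at $x=1/4$. The conclusion is still true, but only because the square-root term also contributes: $-\frac{\sqrt n}{2\sqrt{(n+1)z-1/4}}\leq -\frac12\sqrt{n/\bigl((n+1)y_m(n)\bigr)}$, which tends to $-2^{-2/3}\approx-0.63$, and $-0.76+1-0.63<0$. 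So you must carry all three terms through the estimate (with a slightly finer case split for small $n$, where the crude uniform bounds only give a sum of about $0.975$ against the required $1$); once that is done the argument closes.
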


Assuming this lemma for the moment, we first remark that $h(0,z) \geq  0$ by the bound $z\geq ny_m(n)/(n+1)$ and the definition of $y_m$. For the other case we remark that the function
\[
n\mapsto \sqrt n\sqrt{({n+1})z-x} - (n+1) \sqrt z
\]
is increasing in $n$ for fixed $z$. Keeping in mind that $n\geq 2$ we therefore have
\[
h(1/4,z)\geq \frac32 \left(z-\tfrac14\right)^2 + \frac12 + \sqrt 2 \sqrt{3z-\tfrac14}
- 3\sqrt z,
\]
and this function is positive for all $z\geq 2y_m(2)/3\approx 0.51$. This concludes the proof for $n\geq 2$. 

\medskip
Before we prove Lemma~\ref{lemma:lowerbound-h} we first discuss the case $n=1$, for which
\[
h(x,z) = 2(z-x)^2 + \sqrt x + \sqrt {2z-x} - 2\sqrt z.
\]
The domain of definition of $z$ is 
\[
\left[\frac12 y_m(1),y_m(1)\right] \subset [0.4410,0.8821].
\]
The mixed derivative $h_{zx}$ is negative on the domain of $x$ and $z$, so that
\[
h_z(x,z) \geq h_z(1/4,z) = 4z-1 + \frac1{\sqrt{2z-1/4}} - \frac1{\sqrt z}.
\]
This expression is again positive for the admissible values of $z$, and we find
\[
h(x,z)\geq h\left(x,\tfrac 12 y_m(1)\right)
= 2\left(\tfrac12 y_m(1)-x\right)^2 + \sqrt x + \sqrt{y_m(1)-x} - 2\sqrt{\tfrac12 y_m(1)}.
\]
Similarly this expression is non-negative for all $0\leq x\leq 1/4$, which concludes the proof for the case $n=1$. 
\qed

\bigskip
{\bf Proof of Corollary \ref{cor:lscfz-cont}}: 
Fix $0<\delta< 2^{-2/3}\pi  $ and $M\in [\delta,1/\delta]$; by Lemma~\ref{lemma:charmin} there exist $n,m$ with $M=nm$ such that $\lscfza(M) = n\, \fza(m)$. Note that if $n=1$ then $m=M\geq \delta$, and if $n>1$ then by Lemma~\ref{lemma:charmin} $m\geq 2^{-2/3}\pi > \delta$; therefore we have $m\geq \delta$ and $n\leq M/\delta$. Since $\lscfza$ is the pointwise minimum of a collection of functions $\fza$, local Lipschitz continuity of $\lscfza$ now follows from the same property for the functions $\fza$ on the domain $[\delta,1/\delta]$. 
\qed

\bigskip

We still owe the reader the proof of Lemma~\ref{lemma:lowerbound-h}. \\
{\bf Proof of Lemma~\ref{lemma:lowerbound-h}}: 
We first show that if $4/25\leq x\leq 1/4$, then $h_x(x,z)< 0$. We estimate the derivative by using the bounds on $z$ and $x$:
\begin{align*}
h_x(x,z) &= -2\frac{n+1}n(z-x) - \frac{\sqrt n}{2\sqrt{(n+1)z-x}}
  + \frac1{2\sqrt x}\\
&\leq -2y_m(n) + \frac{n+1}{2n} - \frac{\sqrt n}{2\sqrt{ny_m(n)}} + \frac54.
\end{align*}
Note that $y_m$ is monotonically decreasing in $n$, and that we can estimate $y_m$ from below by
\[
y_m(n)^{3/2} = (n+1)\left(\sqrt{1+\tfrac1n}-1\right) 
  \geq (n+1)\, \frac1{2\sqrt{\frac{n+1}n}}\, \frac1{n}
  = \frac12\sqrt{\frac{n+1}n}.
\] 
Using $n\geq 2$ we find
\[
h_x(x,z) \leq -2\cdot 2^{-2/3}\left(\frac{n+1}n\right)^{1/3}
  + \frac{n+1}{2n} - \frac1{2\sqrt{y_m(2)}} + \frac54 
  =: \ell\left(\frac{n+1}n\right).
\]
The function $\ell$ is increasing on $[1,\infty)$, and we have 
\[
\ell\left(\frac{n+1}n\right) \leq \ell(\tfrac 32) < 0.
\]

On the remaining region $0<x<4/25$ the second derivative $h_{xx}$ is negative:
\begin{align*}
h_{xx}(x,z) &= 2\frac{n+1}n - \frac{\sqrt n}{4\bigl((n+1)z-x\bigr)^{3/2}}
   - \frac1{4 x^{3/2}}\\
&\leq 3 - \frac14 \frac{125}8 < 0.
\end{align*}
For any fixed $z$, therefore, the function $x\mapsto h(x,z)$ takes its minimum on the boundary, that is in one of the two points $x=0$ and $x=1/4$. Since the first derivative is non-zero on $[4/25,1/4]$, and since the second derivative is non-zero on $(0,4/25]$, the minimum is only attained on the boundary.
This concludes the proof of Lemma~\ref{lemma:charmin}. 
\qed

\section{Discussion}
\label{sec:discussion}

The results of this work provide a rigorous connection between the detailed, micro-scale model defined by $\cal E$ in~\pref{def:Fes} and the macroscopic, upscaled models given by the limiting energies $\Fza$, $\Hza$, $\Fzb$, and $\Hzb$. We now discuss some related aspects. 

\medskip

\textsl{Differences between the two- and three-dimensional cases: scaling.} The consequences of the difference between two and three dimensions in the scaling of the $H^{-1}$-norm are best appreciated in the Green's functions in the whole space: if we replace $x$ by $\eta x$, then 
\[
\log \eta x = \log \eta + \log x \qquad\text{in two dimensions, and}\qquad
\frac1{|\eta x|} = \frac1\eta\cdot \frac1{|x|} \qquad\text{in three.}
\]
The difference between the additive effects in two dimensions and the multiplicative effect in three dimensions is responsible for the difference between the two limiting problems:
\[
\int |\nabla z| + \left(\int z\right)^2\qquad\text{in two dimensions, and}\qquad
\int |\nabla z| + \|z\|_{H^{-1}}^2 \qquad\text{in three.}
\]

\medskip

\textsl{Differences between the two- and three-dimensional cases: local problems.}
Because of this difference in scaling, the local energy contributions $\fza$ (and $\lscfza$) and $\fzb$ are necessarily different, and since the two-dimensional local problem is the isoperimetric problem, its solution can be calculated explicitly in terms of $m$. For the three-dimensional local problem we can only conjecture on the structure of minimizers (see below). 

In addition to this, there is a difference in the handling of the lower semicontinuity in two and three dimensions. This comes from the fact that the definition of $\fza$ presupposes that the mass of $z$ remains localized (does not escape to infinity) while the definition of $\fzb$ does not. As a result, the function $\fzb$ already has the right lower semi-continuity properties, while for $\fza$ we need to explicitly construct the lower-semicontinuous envelope function $\lscfza$. 
 
Some of the other differences are only apparent. For instance, the requirement, in the definition of $\Hzb$, that for each $m^i$ the minimization problem $\fzb(m^i)$ admits a minimizer, is mirrored in two dimensions by the compactness property $\lscfza(m^i)=\fza(m^i)$. The reduction to `blobs' of bounded and separated support (Lemma~\ref{lemma:other_sequence}) is immediate in two dimensions, since it follows from the vanishing of the perimeter.

\medskip

\textsl{Minimizers of the local problem in three dimensions.}
For the three-dimensional minimization problem~\pref{def:fe3} one can show a number of properties. For instance, the concaveness of $\fzb$ for small $m$ implies that for small $m$  minimizing sequences are compact, and the minimizers are balls.  One can also show that for sufficiently large $m$, a ball with volume $m$ will be unstable with respect to symmetry-breaking perturbations; Ren and Wei have documented this phenomenon in two space dimensions~\cite{RenWei08TR}.

For the three-dimensional case, however, one can show that balls become unstable with respect to splitting into two balls of half the volume before they become unstable with respect to small symmetry-breaking perturbations. This leads us to postulate the following characterization of global minimizers, when they exist:
\begin{conjecture}
All global minimizers of the problem $\pref{def:fe3}$ are balls.
\end{conjecture}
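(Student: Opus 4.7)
\medskip

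\noindent\emph{Proof proposal.} My plan is to combine existence and regularity of minimizers with a symmetry argument based on the overdetermined Euler--Lagrange equation. Whenever the infimum in \pref{def:fe3} is attained, the first task is to select a minimizer $A\subset \R^3$ of volume $m$ whose support is bounded and connected. Boundedness follows by extracting a tight minimizing sequence via the concentration-compactness lemma of Lions (as used in Lemma~\ref{lemma:compactness}), while connectedness is forced by Lemma~\ref{lemma:char_limit}: by strict concavity of $\fzb$ on $(0, 2\pi)$ together with the subadditivity inequality \pref{e-0-subadd}, splitting a single component into two of smaller masses strictly increases the energy in this range. Quasi-minimality of the perimeter combined with the classical regularity theory of De~Giorgi--Almgren--Tamanini then shows that $\partial A$ is a smooth embedded hypersurface, and elliptic regularity gives $v \in C^{2,\alpha}(\overline{A})$, where $v(x) = \int_A (4\pi|x-y|)^{-1}\, dy$ is the Newtonian potential generated by $A$.

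Next I would compute the first variation of $\int |\nabla z| + \|z\|^2_{H^{-1}(\R^3)}$ under a normal perturbation of $\partial A$ with velocity $\phi$, subject to the volume constraint enforced by a Lagrange multiplier $\mu$. Since the perimeter contributes $\int_{\partial A}\phi H_{\partial A}$ and the $H^{-1}$-term contributes $2\int_{\partial A}\phi v$, the Euler--Lagrange equation reads
\[
H_{\partial A}(x) \, + \, 2\, v(x) \, = \, \mu \qquad \text{for all } x \in \partial A.
\]
This is an overdetermined condition, since $v$ itself is already determined by $A$ through the global Poisson problem.

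The decisive step is to deduce radial symmetry from this overdetermined boundary condition via the moving-planes method of Alexandrov and Serrin. Fix a direction $e\in S^2$, and for $\lambda\in\R$ set $T_\lambda := \{x\cdot e = \lambda\}$, $H^-_\lambda := \{x\cdot e < \lambda\}$, and write $x^\lambda$ for the reflection of $x$ across $T_\lambda$. Because $A$ is bounded, for $\lambda$ large the reflection $A^\lambda\cap H^-_\lambda$ lies inside $A$; decrease $\lambda$ to the first critical value $\lambda_*$ at which this inclusion becomes tight, either by internal tangency of $\partial A^{\lambda_*}$ with $\partial A$ or by orthogonal tangency at $T_{\lambda_*}$. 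At that value, positivity of the Newtonian kernel combined with the inclusion $A^{\lambda_*}\cap H^-_{\lambda_*}\subset A$ gives $w := v - \tilde v \ge 0$ on $H^-_{\lambda_*}$, where $\tilde v(x) := v(x^{\lambda_*})$. The overdetermined EL condition applied at the contact point equates both mean curvatures and both values of $v$, and a Hopf-lemma argument applied to $w$ (together, at an internal tangency, with the equation satisfied by the difference of the two curvature graphs) forces $w\equiv 0$ throughout $H^-_{\lambda_*}$, hence $A = A^{\lambda_*}$. Running this argument for every $e\in S^2$ and combining with connectedness yields that $A$ is a ball.

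The hard part is the moving-planes step, because $v$ is nonlocal in $A$, so the standard local comparison must be adapted: the contribution to $w$ from the already-symmetric part $A\cap A^{\lambda_*}$ cancels identically, while the contribution from $A\setminus A^{\lambda_*}$ must be shown to be strictly positive, and one needs a version of the Hopf lemma compatible with the boundary identity $H+2v=\mu$ rather than with a Dirichlet condition. Handling the critical boundary touching case rigorously, and ruling out the possibility that $\partial A$ develops ``plateaus'' along $T_{\lambda_*}$ where the strict monotonicity fails, is the delicate point, and is presumably why the authors state this only as a conjecture.
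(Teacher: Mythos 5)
First, a point of order: the paper does not prove this statement. It is stated, deliberately, as an open conjecture (this is the ``ball conjecture'' for the liquid-drop/sharp-interface Ohta--Kawasaki energy on $\R^3$), so there is no proof of the authors' to compare yours against; your proposal has to stand on its own. It does not, and the failure is located exactly where you suspected, but for a sharper reason than ``delicacy'': the moving-planes comparison has the \emph{wrong sign} because the Coulomb self-interaction in \pref{def:fe3} is repulsive.

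Concretely: you correctly derive that $w:=v-\tilde v\ge 0$ on $H^-_{\lambda_*}$ (the reflected cap is contained in $A\cap H^-_{\lambda_*}$ and the kernel is positive and decreasing). But this inequality goes the wrong way for the maximum principle. At a contact point write $\partial A$ and the reflected surface as graphs $\phi_1,\phi_2$ over the common tangent plane, with $\psi:=\phi_2-\phi_1\ge0$ vanishing at the contact point (inner tangency). The two Euler--Lagrange identities $H=\mu-2v$ and $H=\mu-2\tilde v$ give, for a suitable linear elliptic operator $L$ with no favorable zeroth-order sign,
\[
L\psi \;=\; H[\phi_2]-H[\phi_1] \;=\; 2\,(v-\tilde v) + O(\psi) \;\ge\; 0,
\]
so $\psi$ is a nonnegative \emph{subsolution} attaining an interior \emph{minimum}. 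No version of the strong maximum principle or Hopf lemma forces $\psi\equiv 0$ from this; indeed the geometry of inner tangency and the equation both assert $H[\phi_2]\ge H[\phi_1]$, so there is no contradiction to exploit. Likewise your claim that ``a Hopf-lemma argument applied to $w$ forces $w\equiv 0$'' is unsupported: $w$ is a nonnegative superharmonic function on $H^-_{\lambda_*}$ vanishing on $T_{\lambda_*}$, and Hopf gives $w>0$ in the interior unless $A=A^{\lambda_*}$ --- which, fed back into the Euler--Lagrange relation, yields the strict inequality $H(x_0)<H(x_0^{\lambda_*})$, perfectly consistent with inner tangency. The argument would close only if $w\le0$, which is what happens for an \emph{attractive} interaction (replace $+\|z\|^2_{H^{-1}(\R^3)}$ by its negative); for the repulsive term here, perimeter and the nonlocal term compete --- the paper itself notes that balls \emph{maximize} $\|z\|_{H^{-1}(\R^3)}$ at fixed mass --- and this competition is precisely why the conjecture is open. (What is provable by other means --- quantitative isoperimetric inequalities rather than moving planes --- is that minimizers are balls for sufficiently small $m$; the general case remains unresolved.) Your preliminary steps (existence along tight minimizing sequences, connectedness, regularity, and the overdetermined Euler--Lagrange equation $H+2v=\mu$) are all sound, but they are not where the difficulty lies.
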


\medskip

\textsl{Limiting structures.} In both two and three dimensions, the limiting energies `at the next level' $\Hza$ and $\Hzb$ penalize proximity of particles as if they were electrically charged. In two dimensions Lemma~\ref{lemma:charmin} guarantees that the masses $m^i$, which play the role of the charges of the particles, are all the same; in three dimensions we conjecture that the same holds, although currently we are not able to exclude the possibility of $(n-1)$ equal masses and one different mass.

The question whether minimizers of these Coulomb energies are necessarily periodic is a subtle one. It is easy to construct numerical examples of bounded domains on which minimizers \emph{can not} be periodic; see e.g.~\cite{RW6} for examples on discs in $\R^2$. At the same time, the examples with many particles do show a tendency to a triangular packing away from the boundary. In the physical literature such structures are known as \emph{Wigner crystals}, and in that field it is generally assumed that periodic structures have lowest energy. As far as we know, there are no rigorous results that show periodicity without any \emph{a priori} assumptions on the geometry.

Turning to what can be proved, the closest related result we know is the two-dimensional, Leonard-Jones crystallization result of~\cite{Th}. Moreover, for the full problem \pref{def:Fes}, the only {\it periodicity-like} results we know of, in dimension larger than one, a statement concerning the uniformity of the energy distribution on large boxes~\cite{ACO}, and for finite-size structures in $\R^n$ a scaling result bounding the energy in terms of lower-dimensional energies~\cite{vGP08}.

\medskip
\textsl{The role of the mass constraint.} Note that in the main theorems (\ref{first-order -limit}, \ref{th:sharpnextlevel}, \ref{3D-first-order -limit}, and \ref{3D-th:sharpnextlevel}) there is no mass constraint, as in~\pref{mass-constraint}, but only the weaker requirement that $\int v$ is bounded. This merits some remarks:
\begin{itemize}
\item Free minimization of the limiting functionals $\Fza$, $\Hza$, $\Fzb$, and $\Hzb$ simply yields the zero function with zero energy. In order to have a non-trivial object in the limit some additional restriction is therefore necessary. Typically one expects to have a sequence~$v_\eta$ for which the mass either is fixed or converges to a positive value.
\item The fact that only boundedness of $\int v$ is required also implies that this scaling of mass is the smallest one to give (for this scaling of the energies) non-trivial results; if $\int v$ converges to zero, then the limiting energies are also zero, and no structure can be determined. This conclusion resonates with the fact that in the formal phase diagram of the Ohta-Kawasaki functional~\pref{OK} the phase at the extreme ends of the volume fraction range is the spherical phase~\cite{CPW}.
\end{itemize}
\bigskip

\textsl{Related work. }
Our results are consistent with and complementary to two other recent studies in the regime of small volume fraction.  
In \cite{RW5} Ren and Wei prove the existence of  sphere-like solutions to the Euler-Lagrange equation of  (\ref{def:Ees}), and further investigate their stability.  They also show that the centers of sphere-like solutions are close to global minimizers of an effective energy defined over delta measures which includes both a local energy defined 
over each point measure, and a Green's function interaction term which sets their location.
While their results are similar in spirit to ours, they are based upon completely different techniques which are local rather than global.

In \cite{GC,NHR}  the authors explore the dynamics of small spherical phases for a gradient flow for  (\ref{def:Fes})  with  small volume fraction. Here one finds a separation of 
time scales for the dynamics:  Small particles both exchange material as 
in usual Ostwald ripening, and migrate because of an effectively repulsive nonlocal energetic term.  Coarsening via mass 
diffusion only occurs while particle radii are small, and they 
eventually approach a finite equilibrium size.  Migration, on the other hand, is
responsible for producing self-organized patterns. By constructing  approximations based 
upon an Ansatz of spherical particles similar to the classical LSW (Lifshitz-Slyozov-Wagner) theory, one derives a finite dimensional dynamics for particle positions and radii.
For large systems, kinetic-type equations which describe the evolution of a probability 
density are constructed.  A separation of time scales between particle growth and 
migration allows for a variational characterization of spatially inhomogeneous quasi-equilibrium states.
Heuristically this matches our findings of (a) a first order energy which is local and 
essentially driven by perimeter reduction, and (b)  a Coulomb-like interaction energy, at the 
next level,  responsible for placement and self organization of the pattern. It would be 
interesting if one could make these statements precise via the calculation of gradient 
flows and their connection with $\Gamma$-convergence~\cite{SS}. 

We further note that this  asymptotic study  has much in common with the asymptotic analysis of the well-known {\it Ginzburg-Landau functional} for the study of magnetic vortices ({\it cf.} \cite{SS2,  JS, ABO}).  
However our problem is much more direct as it pertains to the asymptotics of the support of minimizers. This is in strong contrast to the Ginzburg-Landau functional wherein one is concerned with an intrinsic vorticity quantity which is captured via a certain gauge-invariant Jacobian determinant of the order parameter.   

\medskip

Although the energy functional~\pref{OK} provides a relatively simple, and mathematically accessible, description of patterns in this block copolymer system, rigorous results characterizing minimal-energy patterns in higher dimensions are few and far between. Apart from the work in this paper we should mention the uniform energy distribution results of~\cite{ACO} which provide a weak statement of uniformity in space, and the comparison of large, localized structures in multiple dimensions with extended lower-dimensional structures~\cite{vGP08}.

For a slightly a different model for block copolymer behavior additional results are available. In~\cite{PV08TR,PV09TR,PV09TRa} the authors study an energy functional consisting of  two terms as in $\cal E$ in~\pref{def:Fes}, but with the $H^{-1}$-norm replaced by the $W^{-1,1}$-norm, or equivalently by the Wasserstein distance of order $1$. For this functional the authors study the symmetric regime, in which $A$ and $B$ appear in equal amounts; a parameter $\e$ characterizes the small length scale of the patterns. They prove that low-energy structures in two dimensions become increasingly stripe-like as $\e\to0$, that the stripe width approaches $\e$, and that the Gamma-limit of the rescaled energy measures the square of the local stripe curvature.

\bigskip

{\bf Acknowledgments:} The research of RC  was partially supported by  
an NSERC (Canada) Discovery Grant. The research of MP was partially supported by 
 NWO project 639.032.306. 
We thank Yves van Gennip for many helpful comments on previous versions of the  manuscript.

\bibliography{ref}
\bibliographystyle{plain}

\end{document}